\documentclass[11pt]{amsart}
\usepackage[margin=0.78in]{geometry}

\usepackage[utf8]{inputenc}
\usepackage{amsmath,amssymb,amsfonts,amsthm}
\usepackage{hyperref}
\usepackage{xcolor}
\usepackage{tikz}

\newtheorem{theorem}{Theorem}[section]
\newtheorem*{theorem*}{Theorem}
\newtheorem{corollary}[theorem]{Corollary}

\newtheorem{proposition}[theorem]{Proposition}
\newtheorem{lemma}[theorem]{Lemma}

\theoremstyle{definition}
\newtheorem{definition}[theorem]{Definition}
\newtheorem{remark}[theorem]{Remark}

\newtheorem{assumption}{Assumption}


\newcommand{\bR}{\mathbb{R}}
\newcommand{\bN}{\mathbb{N}}
\newcommand{\cH}{\mathcal{H}}
\newcommand{\cL}{\mathcal{L}}

\newcommand{\cM}{\mathcal{M}}

\newcommand{\cD}{\mathcal{D}}
\newcommand{\cF}{\mathcal{F}}
\newcommand{\hd}{\tilde{d}}

\newcommand{\pd}[2]{\frac{\partial#1}{\partial #2}}

\newcommand{\supp}[1]{\mathrm{supp}\left(#1\right)}

\newcommand{\norm}[1]{\left\lvert{#1}\right\rvert}

\newcommand{\lsf}{\mathrm{LSF}}

\title[Non-uniqueness in MCF]{Non-uniqueness in Mean Curvature Flow: \\Non-canonical solutions via the parabolic Allen--Cahn}
\author{J.~M.~Daniels-Holgate}
\address{School of Mathematical Sciences Queen Mary University of London Mile End Road London E1 4NS UK}
\email{j.danielsholgate@qmul.ac.uk} 

\begin{document}
\begin{abstract}When mean curvature flow evolves non-uniquely, the flow is said to fatten. The work of Ilmanen shows that any weak MCF is supported inside the fattening, and work of Hershkovits--White identified canonical weak flows supported on the boundary of the fattening, known as the outermost flows. It is natural to ask, when the flow fattens, are there weak mean curvature flows supported strictly inside the fattening? Outside of some special cases (e.g. flow from cones), this question was entirely open.
We show these interior flows exist, providing a general construction for non-outermost flows as limits of solutions to the parabolic $\varepsilon$-Allen--Cahn. This gives the first examples of closed, non-trivial, non-canonical, integral Brakke motions. 
As part of this construction, we study the $\varepsilon$-Allen--Cahn flow from low regularity initial data, and our results demonstrate the existence of integral Brakke motions from fractal sets. 
This includes the existence portion of Hershkovits's work on mean curvature flow from Reifenberg sets.
\end{abstract}
\maketitle
\section{Introduction}
    Given a smooth double-well potential $F$, the parabolic $\varepsilon$-Allen--Cahn ($\varepsilon$-AC) equation is given by 
        \begin{align}
        \pd{}{t} u^\varepsilon &= \Delta u^\varepsilon - \frac1{\varepsilon^2}f(u^\varepsilon)
        \end{align}
        where $f=F'$. The $\varepsilon$-AC equation was proposed by \cite{AllenCahn} a model of phase transition in metallic alloys, where the sharp interface is replaced by a phase field.

        A family of hypersurfaces $\{M_t\}_{t\in[0,T]}$ is said to move by mean curvature flow if 
        \begin{align*}
            (\partial_t \mathbf{x})^\perp = {\mathbf{H}}(\mathbf{x},t),
        \end{align*}
        where ${\mathbf{H}}=- H\nu$ denotes the mean curvature vector of $M_t$ at $\mathbf{x}$. Whilst the mean curvature flow from smooth hypersurfaces is known to remain unique as long as it is smooth, it has long been known that flow from immersed hypersurfaces can have multiple evolutions. More recently, work of Ilmanen--White \cite{IW} has shown mean curvature flow from smooth, closed, embedded hypersurfaces can also evolve non-uniquely after the first singular time.

        We will explore the relationship between the solutions to the $\varepsilon$-Allen--Cahn equation and mean curvature flow in the presence of non-unique evolution.
        
        \subsection{Background}
        The convergence of the nodal set of solutions to \ref{eAC} to smooth motions by mean curvature flows was established by De Mottoni--Schatzman, \cite{MS1}, via asymptotic expansion, and later Chen,\cite{Chen}, via a maximum principle argument. 
        
        Mean curvature flow (MCF) of compact hypersurfaces is known to form singularities in finite time, and there are two key notions of weak mean curvature flow through these singularities: The Level-Set flow (LSF), and the Brakke Flow. See Definitions \ref{brakkeflow} and \ref{levelsetflow}. The level-set flow was introduced independently by Evan--Spruck and Chen--Giga--Goto, and can be started from merely continuous initial data.
         A key feature of the LSF is that it captures non-unique evolution of mean curvature flow by developing an interior, this phenomenon is known as \textit{fattening}, (Definition \ref{fattening}). Whilst Evans--Spruck were able to provide an example of fattening LSF with immersed initial data (the classical figure 8) in their original papers \cite{ES1}, 
          it was not until the work of Ilmanen--White \cite{WICM, IW} that it was known if the LSF starting from a smooth compact hypersurface could fatten. See also the authors work with Chodosh and Schulze \cite{CDHS}, where a fattening dichotomy is established for compact MFC encountering isolated conical singularities, and also the works of Lee--Zhao, \cite{LeeZhao}, which, when combined with the work of Chodosh--Daniels-Holgate--Schulze yield the existence of smooth compact initial data whose level-set flow fattens at the first singular time.

         Fattening presents a major obstacle for using the MCF as a tool for solving topological problems: Whilst one might be able to say something about the outermost flows (see Definition \ref{outermost}), as in the canonical neighbourhood theorem of \cite{CDHS}, if one is trying to flow a \textit{family} of hypersurfaces, fattening represents a discontinuous jump in the family of flows, across which topological understanding is lost. This motivates understanding flows supported within the fattening.

         Very little was know about the composition of the LSF, and this work is the first key step towards developing a full understanding. Work of Ilmanen \cite{IlmanenER} has shown that any Brakke flow must be supported within the LSF. The converse, that there are flows within the fattening, was considered heuristically true within the field, though without formal proof which we now provide. Indeed, there has been a lot of evidence for the existence of interior flows. Work of Chen--Bernstein--Wang, \cite{CBW}, constructs non-outermost flows from cones, formally, heteroclinic flows between stable expanders. In the case of curve shortening flow, work of Kim--Tonegawa, \cite{KimTonegawa} suggests that two triple junctions are a possible a resolution from the immersed cross in curve shortening flow. Note, the Kim--Tonegawa triple junction resolution must be interior, as the outermost flows from the cross are smooth. See Section \ref{thefigure8}.

        Returning to the discussion of the $\varepsilon$-Allen--Cahn flow, in the absence of fattening, i.e. $\cH^{n+1}(u^{-1}({0}))=0$, Evans--Soner--Souganidis \cite{ESS} demonstrated the convergence of the phase-field $u^\varepsilon$ to a solution $u$ of the level-set flow, hence establishing the $\varepsilon$-Allen--Cahn flow as a model of weak mean curvature flow through singularities. 
        They make no claims regarding the case in which the LSF fattens. 
        
        In \cite{Ilmanen}, Ilmanen took an entirely different approach to relating the $\varepsilon$-Allen--Cahn flow to weak MCF. Using methods from geometric measure theory, Ilmanen demonstrated that, for a certain class of solutions, the 1-parameter family of energy measures
        \begin{align*}
            \left\{d \mu^\varepsilon(dx,t) \right\}_{t\in[0,\infty)}= \left\{\left[ \frac{\varepsilon\norm{\nabla u^\varepsilon}^2(x,t)}{2} +\frac{F(u^\varepsilon(x,t))}{\varepsilon} \right]dx\right\}_{t\in[0,\infty)}
        \end{align*}
        converges to a Brakke flow (a 1-parameter family of Radon measures) as the parameter $\varepsilon\to0$. One notes that this convergence is as varifolds, not only as Radon measures. Ilmanen achieved this by establishing $\varepsilon$-versions of Brakke's inequality and Huisken's monotonicity formula, under the assumption that the discrepancy
        measure, 
        \begin{align*}
            d\xi^\varepsilon=\left[\frac{\varepsilon\norm{\nabla u^\varepsilon}^2(x,t)}{2} -\frac{F(u^\varepsilon(x,t))}{\varepsilon} \right] dx
        \end{align*}
        is non-positive. Ilmanen verified this was a practical assumption by giving an explicit construction of initial data for the $\varepsilon$-AC equation with non-positive discrepancy from boundaries of sets with upper $\cH^n$ density bounds that could be approximated by smooth hypersurfaces. Soner later extended the class of valid initial data by showing rapid decay of the discrepancy as $\varepsilon\to0$, \cite{Soner1}, though this work requires the hypersurface be $C^2$. Finally, Tonegawa demonstrated that the limiting varifolds have integer multiplicity, cementing the notion of $\varepsilon$-Allen--Cahn flow as a suitable a model of weak MCF.

    \subsection{Results}
    A key feature of Ilmanen's framework is that, unlike Evans--Soner--Souganidis \cite{ESS}, no assumption is made on the level-set flow of the hypersurface. As such, the support of the (subsequential) limiting flow is ambiguous, the only restriction being 
            \begin{align*}
                \supp{\cM^\infty}\subset \lsf(M).
            \end{align*} 
    Indeed, this leaves open the possibility that the Brakke flow synthesised from Ilmanen's process is supported entirely on the interior. We demonstrate this is possible.
    
    \begin{theorem}\label{main}
        Let $M_0$ be a compact (generalised) hypersurface. Suppose that $\lsf(M_0)$ fattens in finite time. Then, for each $X_0\in\lsf(M_0)$, there exists a unit-regular, integral Brakke flow $\{\mu_t\}_{t\in[0,\infty)}$ starting from $M_0$, supported at $X_0$.
    \end{theorem}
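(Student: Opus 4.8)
The plan is to realise the flow as a subsequential limit of $\varepsilon$-Allen--Cahn energy measures, in Ilmanen's framework, from initial data $u^\varepsilon_0$ engineered so that the limiting Brakke flow is forced through $X_0=(x_0,t_0)$. Recall that any such limit $\cM^\infty$ automatically satisfies $\supp{\cM^\infty}\subseteq\lsf(M_0)$, so the whole point is to choose the data (and the subsequence) so that this inclusion is saturated at $X_0$.

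I would first reduce to the essential case. If $X_0$ lies on the ``thin'' part of $\lsf(M_0)$ --- in particular at any time before the first singular time --- then there the level--set flow is a classical flow and any $\varepsilon$-AC limit from a standard layered approximation of $M_0$ has $X_0$ in its support. If $X_0$ lies on the boundary of the fattened region it lies on the support of an outermost flow, which by Hershkovits--White is a unit-regular integral Brakke flow from $M_0$, so we are done in that case too. It remains to treat $X_0$ in the interior of the fattened region; there $X_0$ admits a backward parabolic neighbourhood $B_\rho(x_0)\times(t_0-\rho^2,t_0]\subseteq\lsf(M_0)$.

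I would then fix a concrete \emph{guiding} flow through $X_0$ inside this neighbourhood: for $t_1$ slightly below $t_0$, the round shrinking sphere $\Sigma_t=\partial B_{r(t)}(x_0)$, $r(t)=\sqrt{2n(t_0-t)}$, is a smooth mean curvature flow on $[t_1,t_0)$ whose spacetime track together with the collapse point $X_0$ is a compact subset of $\lsf(M_0)$ lying inside the ball. The target is a unit-regular integral Brakke flow from $M_0$ whose support on $[t_1,\infty)$ contains $\Sigma$; on $[0,t_1]$ it must connect back to $M_0$, and one takes it to be a weak flow of $M_0$ that, inside the fattened region, buds off in its support a shrinking-sphere piece collapsing at $X_0$. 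Such a weak flow from $M_0$ exists exactly because $X_0$ lies in the fattened region, but the configuration it presents at intermediate times need not be a smooth hypersurface --- it may be a rough, even fractal, set --- which is precisely why the paper's results on $\varepsilon$-Allen--Cahn flow from low regularity initial data (which include the existence half of Hershkovits's theorem on flowing Reifenberg sets) are needed here: to flow such configurations, and to certify that the limit is integral and unit-regular.

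The heart of the matter --- and the step I expect to be the main obstacle --- is constructing the initial data $u^\varepsilon_0$ that realises this target. These must have uniformly bounded energy and non-positive discrepancy, which for the rough or perturbed profiles at hand requires extending the initial-data constructions of Ilmanen and Soner to low regularity; they must satisfy $\mu^\varepsilon_0\rightharpoonup\cH^n\llcorner M_0$ as varifolds; and yet they must carry a perturbation of \emph{vanishing} area, located where the flow of $M_0$ develops its singularity inside the fattened region, tuned so that along a subsequence the limiting energy measures --- which converge to an integral, unit-regular Brakke flow by Ilmanen's $\varepsilon$-monotonicity and Brakke inequality together with Tonegawa's integrality --- retain the guiding sphere in their support. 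What makes a vanishing perturbation enough is exactly that fattening is a discontinuity of the flow in its initial data: a correctly chosen arbitrarily small perturbation changes the limit macroscopically, so that any prescribed point of $\lsf(M_0)$, and $X_0$ in particular, can be captured --- morally one is tuning the data onto the (unstable) interior flow long enough for its track to reach $X_0$. Finally I would verify that the concatenation of the connecting flow on $[0,t_1]$ with the guiding flow on $[t_1,\infty)$ is a single unit-regular integral Brakke flow from $M_0$ with $X_0$ in its spacetime support.
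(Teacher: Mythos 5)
Your proposal correctly locates the difficulty — you write that ``the heart of the matter, and the step I expect to be the main obstacle, is constructing the initial data $u^\varepsilon_0$ that realises this target'' — but it then leaves that step as a black box. You gesture at ``a perturbation of vanishing area\ldots tuned so that along a subsequence the limiting energy measures\ldots retain the guiding sphere in their support,'' without giving any mechanism by which such a tuning could actually be carried out. Moreover, the idea of first fixing a target spacetime track (a shrinking sphere through $X_0$, concatenated with a weak flow of $M_0$) and then manufacturing $\varepsilon$-AC data that converges to it is not something the existing $\varepsilon$-AC theory supports: the low-regularity existence results you invoke (including the Hershkovits-type corollary from the paper itself) let you flow a rough \emph{time-zero} hypersurface, but they give no control over \emph{which} Brakke flow you get when the level-set flow fattens, which is exactly the ambiguity you are trying to resolve. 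So as written the argument is circular at its key step.

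The paper resolves the selection problem by an entirely different device, which renders both the guiding sphere and the tuning unnecessary. It foliates a tubular neighbourhood of $M_0$ by leaves $M_s=\{d(\cdot,M_0)=s\}$, $s\in[-\eta,\eta]$, with $\eta$ chosen so that $\lsf(M_{\pm\eta})$ do not fatten, and runs $\varepsilon$-AC from the crude data $2\chi_{M_s}-1$ for \emph{every} leaf. De Mottoni--Schatzman give continuous dependence on $s$ plus a comparison principle, so for fixed small $\varepsilon$ the map $s\mapsto u^\varepsilon_s(X_0)$ is continuous and monotone. Since $X_0\in\lsf(M_0)$ is sandwiched between the non-fattening flows of $M_{\pm\eta}$, the $\varepsilon$-AC convergence theorem of Evans--Soner--Souganidis forces $u^\varepsilon_{-\eta}(X_0)$ near $+1$ and $u^\varepsilon_{+\eta}(X_0)$ near $-1$, and the intermediate value theorem then produces $s_0(\varepsilon)$ with $u^\varepsilon_{s_0(\varepsilon)}(X_0)=0$. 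Taking a diagonal limit in $(\varepsilon_j, s_j)$ and applying Ilmanen/Soner/Tonegawa together with the clearing-out lemma yields an integral, unit-regular Brakke flow supported at $X_0$; a separate barrier argument against the non-fattening leaves shows its support stays in $\lsf(M_0)$, giving ``starting from $M_0$.'' In short, your high-level intuition (fattening is a discontinuity in initial data, so an arbitrarily small perturbation can move the limit to any prescribed point) is right, but the mechanism that turns it into a proof is the one-parameter foliation plus the intermediate value theorem, not a hand-tuned perturbation aimed at a prescribed guiding flow.
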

    
    \begin{remark}
    Note, for generalised hypersurfaces, `starting from' should be taken to mean that the Hausdorff distance of $\supp{\mu_t}$ to the initial data goes to $0$ as $t\to0$. This is strictly weaker than the usual notion of starting from. When the initial data is smooth, however, a short argument (Theorem \ref{goodstartingfrom} and Remark \ref{goodstartingfromremark}) shows that we can use the classical notion of `starting from'.
    \end{remark}
   Applications of this theorem are givien in Section \ref{examples}, where we study the $g$-Wagon-Wheel, which was shown by Ilmanen--White, \cite{IW}, to fatten in finite time, and also the flow from the figure 8.

    The core result of this article includes existence for integral Brakke flows starting from very weak initial conditions:
    \begin{corollary}
        Let $\Omega\subset\bR^{n+1}$ be a compact set such that its boundary $M=\partial\Omega$ satisfies 
        \begin{align*}
            \cH^{n+\kappa}(M)<\infty
        \end{align*}
        for some $\kappa\in[0,1)$ and there exists $\delta,\theta>0$ such that
        \begin{align*}
            \frac{\cH^{n+\kappa}(M\cap B^{n+1}(p,\gamma))}{\gamma^{n+\kappa}}\geq \theta
        \end{align*}
    for all $p \in M$ and $\gamma\in(0,\delta]$.
    Then, there exists an integral Brakke flow starting from $M$.
    \end{corollary}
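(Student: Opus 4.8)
The plan is to construct the flow as a subsequential limit of the energy measures of solutions $u^\varepsilon$ to the $\varepsilon$-Allen--Cahn equation, following Ilmanen's framework (and the low-regularity refinements developed above), with the two hypotheses on $M=\partial\Omega$ playing complementary roles: $\cH^{n+\kappa}(M)<\infty$ controls how much energy is present, while the lower Ahlfors-type bound prevents the limiting measures from collapsing before they can start from $M$. First I would fix the initial data: take $u^\varepsilon_0$ to be a local modification near the interface of $q(\hd(\cdot,M)/\varepsilon)$, where $q$ is the one-dimensional standing-wave profile and $\hd$ a smoothed signed distance to $M$, positive on $\Omega$; the modification is Ilmanen's, chosen so that the discrepancy measure $\xi^\varepsilon$ is non-positive, and, crucially, it is purely local to the transverse profile and requires no upper density bound on $M$. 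The $\varepsilon$-AC flow from this data is globally defined, and we study $\mu^\varepsilon_t$ as $\varepsilon\to0$.

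The heart of the matter is a uniform-in-$\varepsilon$ local energy bound. From $\cH^{n+\kappa}(M)<\infty$ together with the lower bound, a Vitali/covering argument shows that $M$, hence its $\gamma$-neighbourhood, is covered by $O(\gamma^{-(n+\kappa)})$ balls of radius $\gamma$ with bounded overlap, so the $\gamma$-tube of $M$ has volume $\lesssim\gamma^{1-\kappa}$; taking $\gamma=\varepsilon$ yields $\mu^\varepsilon_0(\bR^{n+1})\lesssim\varepsilon^{-\kappa}$. This is finite for each $\varepsilon$ but diverges as $\varepsilon\to0$ when $\kappa>0$, so the initial energy is genuinely unbounded and one cannot simply invoke compactness at $t=0$. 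The point is that the flow sheds this excess quickly: comparison with shrinking spheres confines the interface of $u^\varepsilon(\cdot,\tau)$ to the $C\sqrt\tau$-tube of $M$, and the quantitative parabolic-smoothing and energy-dissipation estimates for the $\varepsilon$-AC flow from low-regularity data, combined with the covering count at scale $\sqrt\tau$, give $\mu^\varepsilon_\tau(\bR^{n+1})\lesssim\tau^{-\kappa/2}$ uniformly in $\varepsilon$ for each $\tau>0$ (and, when $\kappa=0$, a bound uniform down to $\tau=0$). With locally uniformly bounded energy on $[\tau,\infty)$ for every $\tau>0$, Ilmanen's compactness applies, using the $\varepsilon$-Brakke inequality, the $\varepsilon$-monotonicity formula and $\xi^\varepsilon\le0$, and along a subsequence produces a Brakke flow $\{\mu_t\}$; Tonegawa's integrality theorem and the unit-regularity established above then give that $\{\mu_t\}_{t>0}$ (or $\{\mu_t\}_{t\ge0}$ when $\kappa=0$) is a unit-regular integral Brakke flow.

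It remains to see that $\mu_t$ starts from $M$ in the Hausdorff sense. The inclusion $\supp{\mu_t}\subset\{\hd(\cdot,M)<C\sqrt t\}$ is again the shrinking-sphere barrier. For the reverse, one uses that the hypotheses force a two-sided volume-density bound at boundary points (note $\cH^{n+\kappa}(M)<\infty$ with $\kappa<1$ already gives $|M|=0$), so that near any $p\in M$ the regions $\{u^\varepsilon>1/2\}$ and $\{u^\varepsilon<-1/2\}$ both occupy a definite fraction of $B(p,\rho)$ up to time $\sim\rho^2$; a relative-isoperimetric estimate then bounds the energy of the transition in $B(p,C\rho)$ below by $c\rho^n$, uniformly in $\varepsilon$, so $\supp{\mu_t}$ meets $B(p,C\sqrt t)$ and $\sup_{x\in M}\hd(x,\supp{\mu_t})\to0$. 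When $M$ is smooth this is upgraded to the classical notion of starting from $M$ via Theorem \ref{goodstartingfrom}. The main obstacle throughout is the energy estimate above: since $\mu^\varepsilon_0(\bR^{n+1})\to\infty$ when $\kappa>0$, soft tools (monotonicity, local area bounds) only bound $\mu^\varepsilon_\tau$ by $\mu^\varepsilon_0$, and one genuinely needs the quantitative fact that sub-$\sqrt\tau$-scale interface structure, which the lower Ahlfors bound guarantees carries a definite share of the energy, is annihilated by time $\tau$; this is precisely the content of the low-regularity $\varepsilon$-Allen--Cahn theory and the technical core of the argument.
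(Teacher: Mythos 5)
Your proposal sits inside the same framework as the paper — convergence of the $\varepsilon$-Allen--Cahn energy measures to a Brakke flow via Ilmanen's machinery, with the generalised hypersurface hypotheses feeding into a uniform-in-$\varepsilon$ energy bound for $t>0$ — but it diverges from the paper in two substantive respects, and one of them is a genuine gap. First, the paper deliberately takes the raw initial datum $u_0 = 2\chi_{\Omega}-1$, while you take Ilmanen's smoothed transverse profile $q(\hd/\varepsilon)$ modified to have non-positive discrepancy. Both are legitimate starting points for the $\varepsilon$-AC flow by De Mottoni--Schatzman, and for this pure existence corollary the choice is indifferent (the paper's aversion to the smoothed profile is motivated by the non-uniqueness results later, where a bias towards the outermost flows must be avoided). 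However, the paper also does not rely on a built-in $\xi^\varepsilon\le 0$ at $t=0$ at all: it verifies Assumption~\ref{Assumption1} directly from the energy estimate and then invokes Soner's decay-of-discrepancy result \cite{Soner1} together with Ilmanen and Tonegawa, so non-positive discrepancy of the initial data is not part of the pipeline.

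The genuine gap is the central energy estimate. You correctly compute $\mu^\varepsilon_0(\bR^{n+1})\lesssim\varepsilon^{-\kappa}$ from the covering count and correctly identify that everything hinges on this excess being annihilated so that $\mu^\varepsilon_\tau$ is bounded uniformly in $\varepsilon$ for each $\tau>0$ — but you then assert this (``the quantitative parabolic-smoothing and energy-dissipation estimates for the $\varepsilon$-AC flow from low-regularity data, combined with the covering count at scale $\sqrt\tau$'') rather than proving it. Monotonicity of the energy only bounds $\mu^\varepsilon_\tau$ by $\mu^\varepsilon_0\sim\varepsilon^{-\kappa}$, and generic parabolic smoothing does not by itself turn that into a bound independent of $\varepsilon$; one needs a mechanism that localises both the gradient and the potential to a shrinking tube around the interface at an explicit rate. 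This is precisely what the paper supplies in Section~3: comparison functions $\tanh(\hd/\varepsilon)$ and $\tanh(\hd/\sqrt t - C)$ built from the \emph{heat flow} $\hd$ of the signed distance (Lemma~\ref{ubarrier}), a separate Gaussian-type super-solution to the linearised gradient equation giving pointwise control of $|Du|^2$ (Lemma~\ref{gradientbarrier}), and then integration via the co-area formula against the level-set bound $\cH^n(\Gamma_r)\lesssim r^{-\kappa}$ of Lemma~\ref{nkappa} (the fractional version of Caraballo's estimate). Your proposal never produces an analogue of these barriers, and without them the claimed bound $\mu^\varepsilon_\tau\lesssim\tau^{-\kappa/2}$ is unsupported. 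Your treatment of the Hausdorff attainment of $M$ as $t\to0^+$ (shrinking-sphere barrier on one side, a density/isoperimetric lower bound on the other) is a reasonable and essentially correct alternative to the paper's inner/outer comparison with non-fattening approximating LSFs, but it is secondary to the missing energy estimate.
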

    This existence result follows from showing when starting the $\varepsilon$-Allen--Cahn flow from an initial condition derived from such generalised hypersurfaces, the $\varepsilon$-Allen--Cahn energy is instantaneously finite, and thus we can appeal to the results of Ilmanen, Soner and Tonegawa, see Theorem \ref{bflimit}.

    This low-regularity initial data includes Reifenberg sets, and so this Corollary reproduces the existence result of Hershkovits, \cite{Her_reif}. Note, Hershkovits also establishes uniqueness and short-time regularity of such flows, which we are unable to achieve using the methods outline in this article.

        When $\cH^n(M)<\infty$, Theorem \ref{main} is novel for $X_0 \notin \partial\lsf(M_0)$, as, given an initial hypersurface with fattening level-set flow Hershkovits--White, \cite{hershwhite} introduced the \textit{inner} and \textit{outer} flows, and construct (unit-regular) integral Brakke flows (when $\cH^n(M)<\infty$) supported on these weak-set flows via Ilmanen's elliptic regularisation process, \cite{IlmanenER}.

    Our process is very simple. We assume $M$ is a generalised hypersurface such that $\lsf(M)$ fattens in finite time. We foliate a neighbourhood of $M$ and demonstrate we can start the $\varepsilon$-Allen--Cahn flow from $2\chi_{M_s}-1$ for each leaf $M_s$ of the foliation, yielding a unique solution $u^\varepsilon_s$. This removes any bias introduced when synthesising a suitable initial conditions. Our results are not dependent on our choice of initial condition, one merely needs to pick initial data for the hypersurface that is not dependent on approximating processes. 
    We then show for each $\varepsilon>0$ and any $X\in \lsf(M_0)$, there is an $s$ such that $X$ is in the nodal set of $u^\varepsilon_s$. The desired Brakke flow is then constructed as a diagonal limit in $s$ and $\varepsilon$. The key ingredient for our results is the maximum principle of de Mottoni--Schatzman.

    We note that such a result was thought unlikely; typically one constructs initial data for the $\varepsilon$-Allen--Cahn flow by smoothing the initial hypersurface to at least $C^2$. Indeed, for rough initial data (and thus entertaining the possibility of instantaneous fattening), the process discussed by Ilmanen \cite[§1.4]{Ilmanen} and Modica \cite[§2(2)]{Modica} smooths the hypersurface to one side. Comparing this process to the construction of the outermost flows by Hershkovits--White, \cite{hershwhite}, it is reasonable to expect such a process would naturally select for the outermost flows.
    
    \subsection{Acknowledgments}
        The author is grateful to Otis Chodosh and Huy The Nguyen for their helpful discussion.
        The author acknowledges support from the EPSRC through the grant EP/Y017862/1, Geometric Flows and the Dynamics of Phase Transitions.
\section{Background and Preliminary results}
\begin{definition}
    Let $\Omega\subset \bR^{n+1}$ be a bounded open set set. We call $M = \partial \Omega$ a \textit{Generalised Hypersurface} if, for some $\kappa\in[0,1) $
    \begin{align*}
        \cH^{n+\kappa}(M)<\infty
    \end{align*} 
    and $M$ has a lower density bound: i.e. there exists $\delta,\theta>0$ such that
                \begin{align*}
                    \frac{\cH^{n+\kappa}(M\cap B^{n+1}(p,\gamma))}{\gamma^{n+\kappa}}\geq \theta
                \end{align*}
            for all $p \in M$ and $\gamma\in(0,\delta]$.
\end{definition}

\subsection{Weak Mean Curvature Flow}
\begin{definition}[Integral Brakke Flow \cite{Brakke,IlmanenER}]\label{brakkeflow}
    We follow the formalism of \cite{white21}. 
    An ($n$-dimensional) \textit{integral Brakke flow} in $\mathbb{R}^{n+1}$ is a 1-parameter family of Radon measures $\{\mu_t\}_{t\in I}$ over an interval $I\subset \mathbb{R}$ such that:
    \begin{enumerate}
       \item For almost every $t$ there exists and integral $n$-dimensional varifold $V(t)$ with $\mu_t=\mu_{V(t)}$ so that $V(t)$ has locally bounded first variation and has mean curvature $\mathbf{H}$ orthogonal to $\mathrm{Tan}(V(t),\cdot)$ almost everywhere.
        \item For a bounded interval $[t_1,t_2]\subset I$ and any compact set $K$ \begin{align*}
            \int^{t_2}_{t_1}\int_K(1+|\mathbf{H}|^2)\,d\mu_t\,dt<\infty\, .
        \end{align*}
        \item If $[t_1,t_2]\subset I$ and $f\in C^1_c(\mathbb{R}^{n+1}\times [t_1,t_2])$ has $f\geq0$ then \begin{align}\label{Brakkeinequality}
            \int f(\cdot,t_2)\,d\mu_{t_2}-\int &f(\cdot,t_1)\,d\mu_{t_1}\nonumber\\
            \leq \int^{t_2}_{t_1}\int_K& \Big(-|\mathbf{H}|^2f+\mathbf{H}\cdot\nabla f +\frac{\partial}{\partial t}f\Big)\,d\mu_t\, dt
        \end{align}
    \end{enumerate}
    We write $\mathcal{M}$ for a Brakke flow $\{\mu_t\}_{t\in I}$ to refer to the family of measures $I\ni t\mapsto \mu_t $ satisfying Brakke's inequality (\ref{Brakkeinequality}).
\end{definition}

\begin{definition}[Gaussian Density]
    The Guassian density at a point $X_0=(\mathbf{x}_0,t_0)$ and scale $r\in(0,\infty)$ is defined as
    \begin{align*}
    \Theta_\mathcal{M}(X_0,r):=\int_{\mathbb{R}^{n+1}}\rho_{X_0}(\mathbf{x},t_0-r^2)\, d \mu_{t_0-r^2}
    \end{align*}
\end{definition}

\begin{definition}[Unit-regular \cite{White09}] 
    
An integral Brakke flow $\mathcal{M}=\{\mu_t\}_{t\in I}$ is said to be \textit{unit-regular} if $\mathcal{M}$ is smooth in some space-time neighbourhood of any spacetime point for $X$ for which $\Theta_\mathcal{M}(X)=1$.
\end{definition}

\begin{definition}[Weak and Level set flow, \cite{IlmanenER, hershwhite}]\label{levelsetflow}
    Let $K\subset\mathbb{R}^{n+1}$ be closed. A one-parameter family of closed sets, $\{K_t\}_{t\geq0}$, with initial condition $K_0=K$ is said to be a \textit{weak set flow} for $K$ if for every smooth mean curvature flow $M_t$ of compact hypersurfaces defined on $[t_0,t_1]$, we have
    \begin{align*}
        K_{t'}\cap M_{t'}=\emptyset \implies K_t\cap M_t=\emptyset
    \end{align*}
    for all $t'\in[t_0,t_1]$ and  $t\in [t',t_1]$.
    
    The \textit{level set flow} is defined as the maximal weak set flow, i.e.~the union of all weak set flows from $K$. We also write $F_t(K)=K_t$ to denote the $t$ time-slice of the level set flow of the set $K$.
    \end{definition}
    \begin{definition}\label{fattening}
        Let $M$ be a compact, smoothly embedded hypersurface. The \textit{fattening time} of the level set flow of $M$ is defined as
        \begin{align*}
            T_\mathrm{fat}:=\inf \{t>0: F_t(M)\ \textrm{has non-empty interior}\}.
        \end{align*}
    \end{definition}
    
    \begin{definition}[Outermost flows, \cite{hershwhite}]\label{outermost}
        Let $M_0^n\subset \bR^{n+1}$ be a smooth, compact hypersurface, and let $U$ be the compact region bounded by $M_0$. Let $U'=\overline{U^c}$. Using the set theoretic formulation of the level set flow, we define space-time tracks of the evolution of $U, U'$ under level-set flow by
        \begin{align*}
            \mathcal{U}:=\{(\mathbf{x},t)\subset \bR^{n+1,1}|\ \mathbf{x}\in F_t(U)\},\\
            \mathcal{U}':=\{(\mathbf{x},t)\subset \bR^{n+1,1}|\ \mathbf{x}\in F_t(U')\}.
        \end{align*}
        We hence define
        \begin{align*}
            M(t):=\{\mathbf{x}\in \bR^{n+1}|\ (\mathbf{x},t)\in \partial \mathcal{U}\},\\
            M'(t):=\{\mathbf{x}\in \bR^{n+1}|\ (\mathbf{x},t)\in \partial \mathcal{U}'\}.
        \end{align*}
        We call $t\mapsto M(t)$ the \textit{outer flow} from $M_0$ and $t\mapsto M'(t)$ the \textit{inner flow} from $M_0$. One can see $M(t), M'(t)\subset F_t(M)$. The elliptic regularisation process of Ilmanen, \cite{IlmanenER}, shows there exist unit-regular, integral Brakke flows supported on these sets. In a slight abuse of terminology, we also refer to these Brakke flows as the inner and outer flows.
    \end{definition}
\subsection{Parabolic Allen--Cahn}
We fix the potential $F=\frac1{2}(1-u^2)^2$ and hence $f=2u(u^2-1)$.

The initial value problem for the parabolic $\varepsilon$-Allen--Cahn is
\begin{align}
    \pd{}{t} u^\varepsilon &= \Delta u^\varepsilon - \frac1{\varepsilon^2}f(u^\varepsilon)\label{eAC},\\
    u^\varepsilon(\cdot,t)&=u_0(\cdot)\label{initdata}.
    \end{align}

Recall the key existence result of De Mottoni--Schatzman.
\begin{theorem}[Existence,\text{\cite[Theorem 1.2]{MS1}}]\label{exist}
    For any $u_0\in L^\infty(\bR^{n+1})$ and any $\varepsilon>0$, there exists a unique function $u\in L^\infty(\bR^{n+1}\times[0,T])$ for all $T>0$, continuous from $\bR^+$ to $\cD'(\bR^{n+1})$, that satisfies \ref{eAC} in the sense of distributions on $\bR^{n+1}\times(0,\infty)$ and \ref{initdata}. 
    Moreover, such a function is infinitely differentiable over $\bR^{n+1}\times(0,\infty)$, and
        \begin{align*}
            \norm{u(\cdot,t)}_\infty \leq \max(\norm{u_0}_\infty,1), \textrm{ for all values of } (x,t)\in\bR^{n+1}\times(0,\infty)
        \end{align*}
\end{theorem}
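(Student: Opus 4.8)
The plan is to treat \eqref{eAC}--\eqref{initdata} as a semilinear heat equation and run the standard mild-solution machinery; the only wrinkle is that $f(u)=2u(u^2-1)$ is not globally Lipschitz, so one has to be slightly careful about passing from local to global existence. I would first freeze $R:=\max(\Norm{u_0}_\infty,1)$ and replace $f$ by a smooth modification $\tilde f$ that agrees with $f$ on $[-2R,2R]$ and is globally Lipschitz (constant-slope continuation outside a large interval). One then looks for a fixed point of the Duhamel map
\[
(\Phi u)(\cdot,t)=e^{t\Delta}u_0-\frac{1}{\varepsilon^2}\int_0^t e^{(t-\sigma)\Delta}\tilde f\big(u(\cdot,\sigma)\big)\,d\sigma
\]
in $C\big([0,\tau];L^\infty(\bR^{n+1})\big)$. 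Since $e^{t\Delta}$ is a contraction on $L^\infty$ and $\tilde f$ is globally Lipschitz, $\Phi$ is a contraction for $\tau=\tau(\varepsilon,\mathrm{Lip}\,\tilde f)$ small, with $\tau$ \emph{independent of the size of the data}; iterating the time-step therefore produces a unique mild solution on all of $[0,\infty)$, with an a priori bound $\Norm{u(\cdot,t)}_\infty\le M(T,\varepsilon,u_0)$ on each $[0,T]$. Continuity into $\cD'(\bR^{n+1})$ at $t=0$ is immediate: $e^{t\Delta}u_0\to u_0$ distributionally and the Duhamel integral is $O(t)$ in $L^\infty$.

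Second, the sharp $L^\infty$ bound. Here the sign structure of $f$ enters: $f>0$ on $(1,\infty)$ and $f<0$ on $(-\infty,-1)$, so the constant functions $\pm R$ form a supersolution/subsolution pair, because $\tilde f(\pm R)=f(\pm R)$ has the appropriate sign (and vanishes in the degenerate case $R=1$). I would deduce $-R\le u\le R$ by a comparison/invariant-region argument: exploiting positivity of the heat semigroup one gets from the Duhamel identity that a first crossing of the level $\pm R$ is impossible, or — using the interior smoothness established in the next step — one applies the parabolic maximum principle together with Hopf's lemma to $w=u-R$ on $[\tau,T]$ and lets $\tau\to 0$. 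Once $|u|\le 2R$, one has $\tilde f(u)=f(u)$, so $u$ solves the original equation \eqref{eAC} globally with the stated bound $\Norm{u(\cdot,t)}_\infty\le R$.

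Third, regularity and uniqueness. Given the mild solution $u\in L^\infty_{\mathrm{loc}}(\bR^{n+1}\times(0,\infty))$, interior parabolic estimates bootstrap in the usual way: $e^{t\Delta}u_0$ is smooth for $t>0$, and the Duhamel term with $L^\infty$ right-hand side lies in $C^{\alpha,\alpha/2}_{\mathrm{loc}}$; then $f(u)$ is Hölder, Schauder theory gives $u\in C^{2+\alpha,1+\alpha/2}_{\mathrm{loc}}$, and differentiating the equation (each spatial/time derivative of $u$ solving a linear parabolic equation with smooth-in-$u$ coefficients) yields $u\in C^\infty\big(\bR^{n+1}\times(0,\infty)\big)$. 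For uniqueness in the stated class, one observes that a distributional solution that is $L^\infty$ has $\partial_t u-\Delta u=-\varepsilon^{-2}f(u)\in L^\infty_{\mathrm{loc}}$, hence satisfies the Duhamel formula on $[s,t]$ for $0<s<t$; letting $s\to 0$ and using the distributional initial trace together with the uniform $L^\infty$ bound recovers the Duhamel identity with initial data $u_0$, after which a Gronwall estimate on $\Norm{u_1-u_2}_\infty$, using the common a priori bound to bound $\mathrm{Lip}$ of $f$ on the relevant range, gives $u_1=u_2$.

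The main obstacle is the second step: converting the local (globally-Lipschitz-truncated) solution into the original global solution requires the $L^\infty$ estimate $\Norm{u(\cdot,t)}_\infty\le\max(\Norm{u_0}_\infty,1)$ to be valid for the mild/distributional solution and not merely for classical solutions, so that one knows the truncated and untruncated problems coincide; the invariant-region comparison has to be run carefully at this low regularity (or bootstrapped through the interior smoothness), and this is where essentially all the work lies.
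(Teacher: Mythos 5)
Your proposal is correct and matches the approach the paper attributes to de~Mottoni--Schatzman (the paper does not reprove this; it cites \cite{MS1} and remarks only that ``the proof uses Duhamel's principle and a contraction mapping argument''). The mild-solution/Duhamel fixed point, the invariant-region argument via the sign of $f$ outside $[-1,1]$ to obtain the sharp $L^\infty$ bound and undo the truncation, the Schauder bootstrap for interior smoothness, and Gronwall for uniqueness are exactly the expected ingredients.
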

The proof uses Duhamel's principle and a contraction mapping argument. As such, one can immediately deduce regularity and the expected derivative estimates for parabolic equations.
\begin{corollary}\label{derivative}
    Let $u^\varepsilon$ be the unique solution to \ref{eAC}. Suppose further that $\norm{u_0}_\infty \leq 1$. Then, for all $t\in(0,\infty)$, there exists a $C\in(0,\infty)$ such that
    \begin{align*}
        \norm{\nabla_j u^\varepsilon(x,t)}\leq \frac{1}{\sqrt{t}}+\frac{\sqrt{t}}{\varepsilon^2}
    \end{align*}
    
\end{corollary}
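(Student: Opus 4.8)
The plan is to differentiate the Duhamel (mild solution) representation of $u^\varepsilon$ and estimate each term using the standard $L^1$ bound on the gradient of the heat kernel. Recall that the solution supplied by Theorem \ref{exist} is constructed via Duhamel's principle and a contraction mapping, so $u^\varepsilon$ satisfies the integral equation
\begin{align*}
    u^\varepsilon(\cdot,t) = \Phi_t * u_0 - \frac{1}{\varepsilon^2}\int_0^t \Phi_{t-s} * f\big(u^\varepsilon(\cdot,s)\big)\, ds,
\end{align*}
where $\Phi_\tau$ is the Euclidean heat kernel on $\bR^{n+1}$. Since, again by Theorem \ref{exist}, $u^\varepsilon$ is smooth on $\bR^{n+1}\times(0,\infty)$ and $\Phi_\tau$ is smooth for each $\tau>0$, I would first justify differentiating under the convolution and the time integral to obtain, for each coordinate $j$,
\begin{align*}
    \partial_{x_j} u^\varepsilon(\cdot,t) = (\partial_{x_j}\Phi_t) * u_0 - \frac{1}{\varepsilon^2}\int_0^t (\partial_{x_j}\Phi_{t-s}) * f\big(u^\varepsilon(\cdot,s)\big)\, ds.
\end{align*}

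Next I would record the elementary heat-kernel estimate $\Norm{\nabla \Phi_\tau}_{L^1(\bR^{n+1})} \le c_n\, \tau^{-1/2}$, which follows from the identity $\nabla\Phi_\tau(z) = -\tfrac{z}{2\tau}\Phi_\tau(z)$ together with a first-moment computation for a Gaussian; Young's convolution inequality then gives $\Norm{(\partial_{x_j}\Phi_\tau) * g}_\infty \le c_n\, \tau^{-1/2}\Norm{g}_\infty$ for all $g\in L^\infty(\bR^{n+1})$. For the nonlinear term I would use the maximum bound from Theorem \ref{exist}: since $\Norm{u_0}_\infty\le 1$ we have $\Norm{u^\varepsilon(\cdot,s)}_\infty\le 1$ for every $s>0$, and as $f(u)=2u(u^2-1)$ is continuous, $c_f:=\sup_{|u|\le 1}\norm{f(u)} = \tfrac{4}{3\sqrt3}<\infty$, whence $\Norm{f(u^\varepsilon(\cdot,s))}_\infty\le c_f$ for all $s\in(0,t)$.

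Putting these together, the linear term is controlled by $c_n t^{-1/2}\Norm{u_0}_\infty\le c_n t^{-1/2}$, and the nonlinear term by
\begin{align*}
    \frac{c_f}{\varepsilon^2}\int_0^t \frac{c_n}{\sqrt{t-s}}\, ds = \frac{2 c_f c_n}{\varepsilon^2}\sqrt{t},
\end{align*}
the time integral converging because $s\mapsto (t-s)^{-1/2}$ is integrable at the endpoint $s=t$. Adding the two contributions and absorbing all dimensional constants into a single $C=C(n)$ yields $\norm{\partial_{x_j} u^\varepsilon(x,t)} \le C\big(t^{-1/2} + \varepsilon^{-2} t^{1/2}\big)$, which is the asserted bound. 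I do not expect a serious obstacle: the only points requiring care are (i) the identification of the distributional solution of Theorem \ref{exist} with the mild solution, so that the differentiation under the integral sign above is legitimate — but this is exactly what the contraction-mapping construction in \cite{MS1} provides — and (ii) the integrability of the kernel near $s=t$, handled above. The same scheme, differentiated one further time and using $\Norm{\nabla^2\Phi_\tau}_{L^1}\le c_n\tau^{-1}$, gives the analogous estimates for all higher-order derivatives with the expected parabolic scaling.
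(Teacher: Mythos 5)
Your proposal is correct and follows essentially the same route as the paper: differentiate the Duhamel representation, use the explicit form of $\nabla\Phi$ together with the a priori bound $\Norm{u^\varepsilon}_\infty\le 1$, and evaluate the resulting Gaussian integrals (your appeal to the $L^1$ bound on $\nabla\Phi_\tau$ plus Young's inequality is exactly the ``standard Gaussian integral'' the paper invokes). One small clarification: the constant $C$ announced in the corollary's statement is silently dropped from its displayed inequality; your $C(n)(t^{-1/2}+\varepsilon^{-2}t^{1/2})$ is the intended reading, so there is no discrepancy.
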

\begin{proof}
    Using Duhamel's principle, we have
        \begin{align*}
            \partial_i u^{\varepsilon}(x,t)&=\int_{\bR^{n+1}} \partial^x_i \Phi(x-y,t)u_0(y) dy -\int_{0}^t \int_{\bR^{n+1}} \partial^x_i \Phi(x-y,t-s) \frac{f(u^\varepsilon(y,s))}{\varepsilon^2}dy ds\\
            &=\int_{\bR^{n+1}} \frac{y_i-x_i}{2t} \Phi(x-y,t)u_0(y) dy -\int_{0}^t \int_{\bR^{n+1}} \frac{y_i-x_i}{2(t-s)}\Phi(x-y,t-s) \frac{f(u^\varepsilon(y,s))}{\varepsilon^2}dy ds
        \end{align*}
        Since $\norm{u^\varepsilon}\leq 1$, after computing a standard Gaussian integral, we conclude the gradient is bounded by the above bound.

\end{proof}

As discussed in the introduction, in order to demonstrate that the limit as $\varepsilon\to0$ of the measures $\mu^\varepsilon$ is a Brakke flow, Ilmanen introduced a monotonicity formula. The convergence was demonstrated only when the discrepancy is non-positive.
 In \cite{Soner1}, Soner was able to extend this result to more general solutions by demonstrating that the discrepancy decays rapidly after the initial time, and is certainly non-positive in the $\varepsilon\to 0$ limit. 
Soner's results require the following assumption
    \begin{assumption}\label{Assumption1}
        For every $\delta>0$ there are positive constants $K_\delta$ and $\eta$ such that for every continuous function $\phi$,
        \begin{align*}
            \sup_{\varepsilon\in (0,1), t\in \left[ \delta, \frac1{\delta} \right]} \left\{ \int \norm{\phi(x)}\mu^\varepsilon(dx,t) \right\} \leq K_\delta \sup \{\norm{\phi(x)}e^{\eta\norm{x}}: x\in \bR^{n+1}\}.
        \end{align*}
    \end{assumption}
    In \cite{Soner2}, Soner verifies this assumption is satisfied for initial data derived from a $C^2$ hypersurface. 
    Combining the results of Ilmanen, Soner and Tonegawa, we get the following result concerning the limit of the energy measures as $\varepsilon\to 0$.
\begin{theorem}[Integral Brakke flow limits]\cite{Ilmanen,Soner1,Tonegawa}\label{bflimit}
    Let $M^n\subset\bR^{n+1}$ be a (generalised) hypersurface and $\Omega$ the region it bounds. Let $u_0\in L^\infty(\bR^{n+1})$ be a function that is positive on $\Omega$ and negative on $\Omega^c$, and denote by $u^\varepsilon$ the solution to the $\varepsilon$-Allen--Cahn flow starting from $u_0$ for each $\varepsilon\in(0,1)$.

    Suppose that the energy measures $\mu^\varepsilon(dx,t)$ defined by the solution $u^\varepsilon$ all satisfy Assumption \ref{Assumption1}.
 Then, taking the (subsequential) limit $\varepsilon\to 0$, we find a limiting family of measures $\{\mu_t\}_{t\in(0,\infty)}$, defining an integral Brakke flow.
\end{theorem}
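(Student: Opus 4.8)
The plan is to verify the hypotheses of the three cited works and then quote them in sequence; the one new wrinkle is that $u_0$ is assumed only to lie in $L^\infty$, and this I sidestep by working exclusively at strictly positive times, where Theorem~\ref{exist} and Corollary~\ref{derivative} guarantee that $u^\varepsilon$ is smooth with the usual parabolic estimates, so the initial regularity never enters the analysis of the flow itself. After replacing $u_0$ by its truncation to $[-1,1]$ (which keeps it positive on $\Omega$ and negative on $\Omega^c$) we may assume $\Norm{u_0}_\infty\le 1$; then $|u^\varepsilon|\le 1$ everywhere by Theorem~\ref{exist}, so $F(u^\varepsilon)\ge0$, each $\mu^\varepsilon_t$ is a genuine nonnegative Radon measure, and $\xi^\varepsilon\le\mu^\varepsilon$ as measures.

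The first step is the basic energy identity: \eqref{eAC} is the $L^2$-gradient flow of $E_\varepsilon[u]=\int\frac{\varepsilon}{2}\norm{\nabla u}^2+\varepsilon^{-1}F(u)\,dx$, so $t\mapsto\mu^\varepsilon(\bR^{n+1},t)=E_\varepsilon[u^\varepsilon(\cdot,t)]$ is non-increasing and the dissipation gives $\int_\delta^{1/\delta}\!\int\varepsilon\norm{\partial_t u^\varepsilon}^2\,dx\,dt\le E_\varepsilon[u^\varepsilon(\cdot,\delta)]$. Assumption~\ref{Assumption1} upgrades this to locally uniform-in-$\varepsilon$ mass bounds, $\sup_{\varepsilon,\ t\in[\delta,1/\delta]}\mu^\varepsilon(K,t)<\infty$ for every compact $K$ and every $\delta>0$. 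A diagonal argument over a countable dense set of times then produces a subsequence $\varepsilon_i\to0$ and a family $\{\mu_t\}_{t>0}$ of Radon measures with $\mu^{\varepsilon_i}_t\rightharpoonup\mu_t$ for a.e.~$t$ (and, via the almost-monotone total mass, at every $t$).

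Next I would invoke Ilmanen's $\varepsilon$-analogues of Huisken's monotonicity formula and of Brakke's inequality for the smooth flows $u^\varepsilon$ on $(0,\infty)$: for $0\le f\in C^2_c$,
\begin{align*}
\sd{t}\int f\,d\mu^\varepsilon_t\ \le\ \int\Big(-f\norm{\mathbf{H}^\varepsilon}^2+\mathbf{H}^\varepsilon\cdot\nabla f+\pd{}{t}f\Big)\,d\mu^\varepsilon_t\ +\ R^\varepsilon(f,t),
\end{align*}
where $\mathbf{H}^\varepsilon$ is the approximate mean curvature vector (parallel to $\nabla u^\varepsilon$) and the error $R^\varepsilon$ is controlled by the discrepancy measure $\xi^\varepsilon$. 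The crucial input is Soner's theorem \cite{Soner1}: under Assumption~\ref{Assumption1}, $\xi^\varepsilon\to0$ as Radon measures on $\bR^{n+1}\times[\delta,1/\delta]$, hence $R^{\varepsilon_i}(f,\cdot)\to0$ in $L^1_{\mathrm{loc}}$. Passing to the limit along $\varepsilon_i$, using lower semicontinuity of $\int f\norm{\mathbf{H}^\varepsilon}^2\,d\mu^\varepsilon_t$ (which also yields that the limit has locally $L^2$ generalised mean curvature with the correct quadratic bound, and, since $\mathbf{H}^\varepsilon$ is parallel to $\nabla u^\varepsilon$, that $\mathbf{H}$ is orthogonal to $\mathrm{Tan}(V(t),\cdot)$), gives Brakke's inequality \eqref{Brakkeinequality} for $\{\mu_t\}$ together with item~(2) of Definition~\ref{brakkeflow}.

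Finally I would address rectifiability and integrality. Ilmanen's monotonicity gives a uniform Gaussian density bound and, in the limit, that $\mu_t$ is $n$-rectifiable for a.e.~$t$; Tonegawa's theorem \cite{Tonegawa} — whose blow-up analysis uses precisely the one-dimensional optimal profile solving $q''=f(q)$ for our choice $F=\frac12(1-u^2)^2$ — then shows that, after the standard normalisation by $\sigma=\int_{-1}^1\sqrt{2F}$, the varifold $V(t)$ has integer multiplicity $\cH^n$-a.e.\ for a.e.~$t$. Assembling items (1)--(3) shows $\{\mu_t\}_{t\in(0,\infty)}$ is an integral Brakke flow. \emph{Main obstacle:} since this is an assembly of established (and deep) machinery, the real work is bookkeeping — in particular verifying that restricting to $t\ge\delta$ genuinely renders the initial regularity irrelevant, and that Assumption~\ref{Assumption1} is exactly the hypothesis Soner needs for the discrepancy to vanish. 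The one place demanding care is the passage to the limit in the almost-Brakke inequality, where the vanishing of $R^{\varepsilon_i}$ must be matched against the lower-semicontinuity of the mean-curvature term so that no mass is lost.
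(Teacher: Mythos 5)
The paper offers no proof of Theorem~\ref{bflimit}: it is stated as a black-box combination of \cite{Ilmanen}, \cite{Soner1}, and \cite{Tonegawa}, and the surrounding text simply indicates that these three results are to be strung together. There is therefore no in-paper argument to compare against. Your reconstruction of how the three works fit together — Ilmanen's $\varepsilon$-analogues of Brakke's inequality and Huisken's monotonicity, Soner's vanishing of the discrepancy under Assumption~\ref{Assumption1} at positive times, Tonegawa's integrality after normalising by $\sigma=\int_{-1}^1\sqrt{2F}$, with a diagonalisation over times and lower semicontinuity of $\int f\norm{\mathbf{H}^\varepsilon}^2\,d\mu_t^\varepsilon$ to pass to the limit in the almost-Brakke inequality — is a faithful and correct account of the standard assembly. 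Your preliminary truncation of $u_0$ to $[-1,1]$ and restriction to $t>0$ are exactly the right moves to render the low regularity of $u_0$ harmless, since Theorem~\ref{exist} gives smoothness and $\norm{u^\varepsilon}_\infty\le1$ for $t>0$. The one thing worth flagging is that the real content of the paper around this theorem lies not in proving it but in verifying its hypothesis: Sections~3's barrier and energy estimates exist precisely to show Assumption~\ref{Assumption1} holds for initial data $2\chi_\Omega-1$ with $\Omega$ merely a generalised hypersurface, which is outside the scope of \cite{Soner2}'s $C^2$ verification. Your sketch takes Assumption~\ref{Assumption1} as given, which is exactly as the theorem is stated, so this is not a gap — just a note on where the paper's genuine work sits.
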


\section{Barriers and Energy Estimates}
In order to apply Theorem \ref{bflimit}, we directly show the $\varepsilon$-AC energy is instantaneously bounded by $\frac{C}{\sqrt{t}}$, for initial data derived from $2\chi-1$, where $\chi$ is the indicator function for the compact region bounded by our generalised hypersurface, from which Assumption \ref{Assumption1} follows immediately.
    We note it should be possible to verify Assumption \ref{Assumption1} for non-compact hypersurfaces using these methods.

    For $\varepsilon\in(0,1)$, we demonstrate barriers to the solution $u_\varepsilon$ from our choice of $u_0$. Using these barriers, we can estimate the $\varepsilon$-Allen--Cahn energy.
\subsection{Barriers}

\begin{definition}
    Let $d(x,M)$ denote the signed distance to a (generalised) hypersurface $M$, we define $\hd$ as the heat flow of $d$:
        \begin{align*}
            \hd(x,t)  = \int_{\bR^{n+1}}\Phi(x-y,t)d(y,M)d\mu(y),
        \end{align*}
    where $\Phi$ is the heat kernel on $\bR^{n+1}$.
\end{definition}

It is clear that $\hd$ is a smoothing of the distance function. We record its properties.
\begin{proposition}[Properties of $\hd$] We have:
            \begin{enumerate}
                \item $(\pd{}{t} - \Delta) \hd(x,t) =0$,
                \item $|D \hd|(x,t)\leq 1$
                \item $\norm{d(x,M)-\hd(x,t)}\leq \sqrt{(n+1)t}$
            \end{enumerate}
\end{proposition}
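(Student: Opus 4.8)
The plan is to verify each of the three properties directly from the defining integral representation $\hd(x,t)=\int_{\bR^{n+1}}\Phi(x-y,t)\,d(y,M)\,d\mu(y)$, exploiting standard properties of the heat kernel together with the fact that $d(\cdot,M)$ is $1$-Lipschitz. For (1), I would note that $\hd$ is by definition the convolution of the (locally bounded, measurable) initial datum $d(\cdot,M)$ with the heat kernel $\Phi$; since $d(\cdot,M)$ grows at most linearly, the integral converges and defines a smooth function on $\bR^{n+1}\times(0,\infty)$, and differentiating under the integral sign using $(\partial_t-\Delta)\Phi=0$ gives $(\partial_t-\Delta)\hd=0$. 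This is the routine part.

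For (2), the key observation is that spatial translation commutes with convolution: for any unit vector $e$ and any $h>0$,
\begin{align*}
    \hd(x+he,t)-\hd(x,t)=\int_{\bR^{n+1}}\Phi(x-y,t)\big(d(y+he,M)-d(y,M)\big)\,d\mu(y),
\end{align*}
and since $|d(y+he,M)-d(y,M)|\leq h$ while $\int\Phi(x-y,t)\,d\mu(y)=1$, we get $|\hd(x+he,t)-\hd(x,t)|\leq h$. Dividing by $h$ and letting $h\to0$ yields $|D\hd|(x,t)\leq 1$. Alternatively one can write $D_i\hd(x,t)=\int\Phi(x-y,t)D_i d(y,M)\,d\mu(y)$ (justified since $d(\cdot,M)$ is Lipschitz, hence differentiable a.e. with $|Dd|\leq 1$ a.e.) and bound the integrand pointwise.

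For (3), I would use that $\Phi(\cdot,t)$ is a probability density, so
\begin{align*}
    |d(x,M)-\hd(x,t)|=\left|\int_{\bR^{n+1}}\Phi(x-y,t)\big(d(x,M)-d(y,M)\big)\,d\mu(y)\right|\leq \int_{\bR^{n+1}}\Phi(x-y,t)\,|x-y|\,d\mu(y),
\end{align*}
again using the $1$-Lipschitz bound $|d(x,M)-d(y,M)|\leq|x-y|$. The remaining integral is the first absolute moment of an $(n+1)$-dimensional Gaussian of variance $2t$ in each coordinate; by Jensen (or Cauchy--Schwarz), $\int\Phi(x-y,t)|x-y|\,d\mu(y)\leq\big(\int\Phi(x-y,t)|x-y|^2\,d\mu(y)\big)^{1/2}=\big(2(n+1)t\big)^{1/2}$. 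This is slightly larger than the claimed $\sqrt{(n+1)t}$; the sharper constant follows from the exact value $\mathbb{E}|Z|=\sqrt{2t/\pi}$ per coordinate and summing, giving $\mathbb{E}|x-y|\leq\sqrt{(n+1)}\cdot\sqrt{2t/\pi}\leq\sqrt{(n+1)t}$ since $2/\pi<1$. I expect no genuine obstacle here; the only mild subtlety is confirming the constant, and one could equally just absorb it and state the bound as $\leq C\sqrt{(n+1)t}$ or $\leq\sqrt{2(n+1)t}$ if the sharper form is not needed downstream.
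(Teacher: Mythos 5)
Your treatment of Item~1 matches the paper. For Item~2 you take a genuinely cleaner route than the paper: the paper integrates by parts to move the derivative onto $d$, invokes Sard/Rademacher to give meaning to $\partial_i d$, applies Cauchy--Schwarz with $\Phi^{1/2}\cdot\Phi^{1/2}$, and then sums over $i$ using $\sum_i(\partial_i d)^2=1$ a.e.; your difference-quotient argument sidesteps all of that, needing only that $d$ is $1$-Lipschitz and that $\Phi(\cdot,t)$ is a probability density. That is the more elementary argument and in my view preferable. (Your ``alternative'' for Item~2 recovers the paper's approach, but note that bounding each coordinate derivative by $1$ only gives $|D\hd|\leq\sqrt{n+1}$; you need the \emph{vectorial} bound $|D\hd|=\left|\int\Phi\,Dd\right|\leq\int\Phi|Dd|\leq1$, or the paper's summed Cauchy--Schwarz.)

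For Item~3 your setup is the same as the paper's and you are right to flag the constant: Cauchy--Schwarz (or Jensen) gives $\int\Phi(x-y,t)|x-y|\,dy\leq\sqrt{2(n+1)t}$, and the paper's stated $\sqrt{(n+1)t}$ is in fact not achievable. (The paper's own chain ends with an arithmetic slip: after the change of variables $y-x=2\sqrt{t}z$ the pushed-forward density is $\pi^{-(n+1)/2}e^{-|z|^2}\,dz$, whose second moment is $(n+1)/2$, so the final line should read $2\sqrt{t}\sqrt{(n+1)/2}=\sqrt{2(n+1)t}$.) However, your attempt to recover the sharper constant does not work. The per-coordinate first absolute moment of $Z_1\sim\mathcal{N}(0,2t)$ is $\mathbb{E}|Z_1|=\sqrt{2t}\sqrt{2/\pi}=2\sqrt{t/\pi}$, not $\sqrt{2t/\pi}$; and the inequality $\mathbb{E}|W|\leq\sqrt{n+1}\,\mathbb{E}|W_1|$ is false in general (for $n=1$ the ratio $\mathbb{E}|W|/\mathbb{E}|W_1|$ equals $\pi/2>\sqrt{2}$). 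The inequality that does hold is $\mathbb{E}|W|\leq\sqrt{(n+1)\,\mathbb{E}W_1^2}=\sqrt{2(n+1)t}$, which you already have. Your instinct to absorb the constant is the right one: downstream the paper only uses Item~3 through margins like $5\sqrt{n+1}$ and $6\sqrt{n+1}$, which have ample slack, so $\sqrt{2(n+1)t}$ suffices and nothing else changes.
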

\begin{proof}
    Item 1 is obvious from the definition of $\hd(x,t)$. To see Item 2, we first calculate the following:
        \begin{align*}
            \norm{\partial^x_i \hd(x,t)}^2=&\norm{\int_{\bR^{n+1}}\partial^x_i\Phi(x-y,t)d(y,M)d\mu(y)}^2\\
            =&\norm{\int_{\bR^{n+1}}\Phi(x-y,t)\partial^y_id(y,M)d\mu(y)}^2\\
            =&\norm{\int_{\bR^{n+1}}\Phi^{1/2}(x-y,t)\Phi^{1/2}(x-y,t)\partial^y_id(y,M)d\mu(y)}^2\\
            \leq&\norm{\int_{\bR^{n+1}}\Phi(x-y,t)d\mu(y)}\norm{\int_{\bR^{n+1}}\Phi(x-y,t)(\partial^y_id(y,M))^2d\mu(y)}\\
            =&\int_{\bR^{n+1}}\Phi(x-y,t)(\partial^y_id(y,M))^2d\mu(y)
        \end{align*}
        We note $d(x,M)$ is Lipschitz, with constant 1. Appealing to Sard's theorem, we conclude the derivative exists almost everywhere and has size 1. Item 2 follows by summing the above inequality for $i=1$ to $n+1$.

    Finally, Item 3:
        \begin{align*}
            \norm{d(x,M)-\hd(x,t)}=&\norm{d(x,M)-\int_{\bR^{n+1}}\Phi(x-y,t)d(y,M)d\mu(y)}\\
            =&\norm{\int_{\bR^{n+1}}\Phi(x-y,t)(d(x,M)-d(y,M))d\mu(y)}\\
            \leq& \int_{\bR^{n+1}}\Phi(x-y,t)\norm{(d(x,M)-d(y,M))}d\mu(y)\\
            \leq &\int_{\bR^{n+1}}\Phi(x-y,t)\norm{x-y}d\mu(y)\\
        \end{align*}
        using the change of variables $(y-x)=\sqrt{4t}z$
        \begin{align*}
            = \int_{\bR^{n+1}}\Phi(z,1)2\sqrt{t}\norm{z}d\mu(z)
            \leq 2\sqrt{t}\sqrt{\int_{\bR^{n+1}}\Phi(z,1)|z|^2 d\mu(z)}
            =\sqrt{(n+1)t}
        \end{align*}
\end{proof}

\begin{theorem} Let
    \begin{align*}
        g(x,t)=\tanh\left(\frac{\hd(x,t)}{\varepsilon}\right).
    \end{align*}
    Then, the function $g(x,t)$ is a sub-solution on $\hd(x,t)>0$ and a super-solution on $\hd(x,t)<0$. 
\end{theorem}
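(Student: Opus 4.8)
The plan is to use the fact that the hyperbolic tangent is exactly the one-dimensional standing-wave profile associated to our double-well potential. Writing $q(r):=\tanh(r)$, we have $q'(r)=1-q(r)^2$ and hence
\begin{align*}
q''(r)=-2q(r)\bigl(1-q(r)^2\bigr)=2q(r)\bigl(q(r)^2-1\bigr)=f\bigl(q(r)\bigr),
\end{align*}
so $q$ solves the ODE $q''=f(q)$ on all of $\bR$. I would then substitute $g=q(\hd/\varepsilon)$ into the parabolic operator $Lu:=\partial_t u-\Delta u+\tfrac1{\varepsilon^2}f(u)$ and compute.

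Carrying this out by the chain rule, with $q',q''$ below evaluated at $\hd/\varepsilon$,
\begin{align*}
\partial_t g=\tfrac1\varepsilon q'\,\partial_t\hd,\qquad
\nabla g=\tfrac1\varepsilon q'\,\nabla\hd,\qquad
\Delta g=\tfrac1{\varepsilon^2}q''\,|\nabla\hd|^2+\tfrac1\varepsilon q'\,\Delta\hd.
\end{align*}
Since $\hd$ solves the heat equation (the first property of $\hd$ recorded above), the terms $\tfrac1\varepsilon q'(\partial_t\hd-\Delta\hd)$ cancel, and substituting $q''=f(q)=f(g)$ into the remaining second-order term gives
\begin{align*}
Lg=\partial_t g-\Delta g+\tfrac1{\varepsilon^2}f(g)=\tfrac1{\varepsilon^2}f(g)\bigl(1-|\nabla\hd|^2\bigr).
\end{align*}

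Two sign observations then conclude the argument. First, $|\nabla\hd|\le1$ by the gradient bound for $\hd$, so $1-|\nabla\hd|^2\ge0$ everywhere. Second, $f(g)=2g(g^2-1)$ with $|g|=|\tanh(\hd/\varepsilon)|<1$, so $g^2-1<0$ and therefore $f(g)$ has the sign opposite to that of $\hd$. Hence $Lg\le0$ on $\{\hd>0\}$, which is precisely the subsolution property there, and $Lg\ge0$ on $\{\hd<0\}$, which is the supersolution property there.

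There is no genuinely hard step here: the only points requiring care are that the chain-rule computation is legitimate — which holds because $\hd(\cdot,t)$ is smooth for each $t>0$, being the heat flow of a locally Lipschitz function of at most linear growth, so all the derivatives above exist classically — and the bookkeeping of the sub-/super-solution sign conventions, which is tracked above. It is worth noting that the identity $q''=f(q)$ is the only place where the specific form $F=\tfrac12(1-u^2)^2$ of the potential is used.
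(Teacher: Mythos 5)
Your proof is correct and follows essentially the same route as the paper: apply the parabolic operator to $g=\tanh(\hd/\varepsilon)$ by the chain rule, cancel the $\partial_t\hd-\Delta\hd$ contribution using the fact that $\hd$ is caloric, identify the remaining second-order term via the standing-wave ODE $q''=f(q)$ for $q=\tanh$, and read off the sign from $|\nabla\hd|\le1$ together with the fact that $f(g)$ is opposite in sign to $\hd$. Your abstraction through $q$ and $q''=f(q)$ is a slightly cleaner presentation than the paper's explicit $\operatorname{sech}$/$\tanh$ bookkeeping (and your coefficient $\tfrac1{\varepsilon^2}$ on $f(g)$ is the consistent one — the paper's ``$\tfrac2{\varepsilon^2}f(g)$'' appears to be a typographical slip), but the underlying argument is identical.
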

\begin{proof}
    We calculate $(\pd{}{t}-\Delta)g + \frac{2}{\varepsilon^2}f(g)$.
        \begin{align*}
            \partial_t g(x,t)&=\frac{\partial_t \hd}{\varepsilon} \operatorname{sech}^2\left(\frac{\hd(x,t)}{\varepsilon}\right)\\
            \partial_i g(x,t)&=\frac{\partial_i \hd}{\varepsilon} \operatorname{sech}^2\left(\frac{\hd(x,t)}{\varepsilon}\right)\\
            \partial_{ii} g(x,t)&=\frac{\partial_{ii} \hd}{\varepsilon} \operatorname{sech}^2\left(\frac{\hd(x,t)}{\varepsilon}\right)-2 \left(\frac{\partial_i \hd}{\varepsilon}\right)^2\operatorname{sech}^2\left(\frac{\hd(x,t)}{\varepsilon}\right)\tanh\left(\frac{\hd(x,t)}{\varepsilon}\right)
        \end{align*}
    from which we conclude 
        \begin{align*}
            (\pd{}{t}-\Delta)g + \frac{2}{\varepsilon^2}f(g)=\frac{2}{\varepsilon^2}\left( (\partial_i \hd)^2-1\right)\operatorname{sech}^2\left(\frac{\hd(x,t)}{\varepsilon}\right)\tanh\left(\frac{\hd(x,t)}{\varepsilon}\right)
        \end{align*}
    Since $(\partial_i\hd)^2\leq1$, we conclude the theorem, since $\tanh\left(y\right)$ has the same sign as its argument.
\end{proof}

\begin{theorem} Let
    \begin{align*}
        \tilde{g}(x,t)=\tanh\left(\frac{\hd(x,t)}{\sqrt{t}}\right).
    \end{align*}
    Then, the function $\tilde{g}(x,t)$ is a sub solution on $\hd(x,t)>4\sqrt{t}$ and a super solution on $\hd(x,t)<-4\sqrt{t}$. 
\end{theorem}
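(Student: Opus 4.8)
The plan is to run the calculation of the previous theorem with $\varepsilon$ replaced by the time-dependent width $\sqrt{t}$, keeping track of the single extra term this produces. Write $w = \hd(x,t)/\sqrt{t}$, so that $\tilde{g} = \tanh(w)$. Since $\sqrt{t}$ is constant in $x$, the spatial derivatives have exactly the same shape as before: $\partial_i\tilde{g} = t^{-1/2}(\partial_i\hd)\operatorname{sech}^2(w)$ and $\partial_{ii}\tilde{g} = t^{-1/2}(\partial_{ii}\hd)\operatorname{sech}^2(w) - 2t^{-1}(\partial_i\hd)^2\operatorname{sech}^2(w)\tanh(w)$. The time derivative is where the new term enters: by the product rule $\partial_t w = t^{-1/2}\partial_t\hd - \tfrac12 t^{-3/2}\hd$, and hence $\partial_t\tilde{g} = \big(t^{-1/2}\partial_t\hd - \tfrac12 t^{-3/2}\hd\big)\operatorname{sech}^2(w)$.

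Next I would assemble $(\pd{}{t}-\Delta)\tilde{g} + \frac{2}{\varepsilon^2}f(\tilde{g})$ as in the previous theorem. The terms carrying $\partial_t\hd$ and $\Delta\hd$ combine to $t^{-1/2}(\pd{}{t}-\Delta)\hd\cdot\operatorname{sech}^2(w)$, which vanishes by Item 1 of the Proposition on $\hd$; in particular no bound on $\Delta\hd$ is ever required. Rewriting $t^{-3/2}\hd = w/t$ (since $\hd = w\sqrt{t}$) and using $f(\tanh w) = -\operatorname{sech}^2(w)\tanh(w)$ exactly as before, what remains is
$$(\pd{}{t}-\Delta)\tilde{g} + \frac{2}{\varepsilon^2}f(\tilde{g}) = \frac{\operatorname{sech}^2(w)}{t}\Big(2\norm{D\hd}^2\tanh(w) - \frac{w}{2}\Big) - \frac{2}{\varepsilon^2}\operatorname{sech}^2(w)\tanh(w).$$

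It then remains to read off the sign on the two regions, using only $\norm{D\hd}\le 1$ (Item 2 of the Proposition), $\norm{\tanh(w)}<1$, and $\operatorname{sech}^2(w)>0$. On $\hd>4\sqrt{t}$, i.e. $w>4$: the last term is negative since $\tanh(w)>0$, and in the bracket $2\norm{D\hd}^2\tanh(w) < 2 < w/2$, so the whole right-hand side is negative and $\tilde{g}$ is a subsolution. On $\hd<-4\sqrt{t}$, i.e. $w<-4$: now $\tanh(w)<0$ makes the last term positive, and $2\norm{D\hd}^2\tanh(w) - w/2 \ge 2\tanh(w) - w/2 > -2 + 2 = 0$, so the right-hand side is positive and $\tilde{g}$ is a supersolution. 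The constant $4$ is exactly what makes the competition $2\norm{D\hd}^2\norm{\tanh(w)} \le 2 < \norm{w}/2$ work.

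The computation has no genuine obstacle; the only point requiring care is the extra drift term $-\tfrac12 t^{-3/2}\hd\,\operatorname{sech}^2(w)$ coming from differentiating $t^{-1/2}$. This is the term that forces the threshold $\norm{\hd}>4\sqrt{t}$ in place of the $\norm{\hd}>0$ of the previous theorem, and its sign is favourable — it pushes $\tilde{g}$ further towards $\pm 1$ away from the zero set of $\hd$ — which is precisely why $\sqrt{t}$ is a legitimate interface width even though $\hd$ has not been rescaled by $\varepsilon$. (Dimensionally, one is exploiting that $\hd$ grows like the distance to $M$, so $\hd/\sqrt{t}$ is already large once one is a few multiples of $\sqrt{t}$ away from the interface.)
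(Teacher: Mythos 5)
Your proposal is correct and follows the same route as the paper: compute $(\partial_t-\Delta)\tilde g$ directly, use that $\hd$ is caloric so that $\partial_t\hd$ and $\Delta\hd$ cancel, use $|\nabla\hd|\le 1$, and then check the signs of the surviving terms on $|w|>4$. The only real difference is that you spell out the competition $|w|/2 > 2 \ge 2|\nabla\hd|^2|\tanh w|$ which the paper's proof compresses into the final one-line remark, and you inherit the paper's typo in writing $\tfrac{2}{\varepsilon^2}f(\tilde g)$ in the heading while your displayed right-hand side correctly corresponds to the operator $\partial_t-\Delta+\tfrac{1}{\varepsilon^2}f$.
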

\begin{proof}
    We calculate $(\pd{}{t}-\Delta)\tilde{g} + \frac{2}{\varepsilon^2}f(\tilde{g})$.
        \begin{align*}
            \partial_t g(x,t)& =\left(\frac{\partial_t \hd(x,t)}{\sqrt{t}}-\frac{\hd(x,t)}{2t^{3/2}} \right)\operatorname{sech}^2\left(\frac{\hd(x,t)}{\sqrt{t}}\right)\\
            \partial_i g(x,t)&=\frac{\partial_i \hd(x,t)}{\sqrt{t}} \operatorname{sech}^2\left(\frac{\hd(x,t)}{\sqrt{t}}\right)\\
            \partial_{ii} g(x,t)&=\frac{\partial_{ii} \hd(x,t)}{\sqrt{t}} \operatorname{sech}^2\left(\frac{\hd(x,t)}{\sqrt{t}}\right)-2 \left(\frac{\partial_i \hd(x,t)}{\sqrt{t}}\right)^2\operatorname{sech}^2\left(\frac{\hd(x,t)}{\sqrt{t}}\right)\tanh\left(\frac{\hd(x,t)}{\sqrt{t}}\right)
        \end{align*}
    from which we conclude 
        \begin{align*}
            (\pd{}{t}-\Delta)g+\frac{2}{\varepsilon^2} = \left(-\frac{\hd(x,t)}{2t^{3/2}}+2\left(\frac{\norm{\nabla\hd} ^2(x,t)}{t}-\frac1{\varepsilon^2}\right)\tanh\left(\frac{\hd(x,t)}{\sqrt{t}}\right)\right)\operatorname{sech}^2\left(\frac{\hd(x,t)}{\sqrt{t}}\right)
        \end{align*}
    Since $(\partial_i\hd)^2\leq1$, we conclude the theorem, since $\tanh\left(y\right)$ has the same sign as its argument.
\end{proof}
\begin{remark}
    The same argument, and same distance restrictions, works for 
    \begin{align*}
        \tilde{g}(x,t)=\tanh\left(\frac{\hd(x,t)}{\sqrt{t}}-C\right)
    \end{align*}
    where $C\in \bR$ is constant.
\end{remark}
We will be integrating over the level-sets of the distance function, thus, it is more convient to have barriers in terms of the distance, rather than the heat flow of the distance. To this end:
\begin{lemma}[$u$ barrier Lemma]\label{ubarrier} Let $u$ solve \ref{eAC} with initial data $u_0$.
    On $d(x,M)\geq 5\sqrt{n+1}\sqrt{t}$
    \begin{align}
        u(x,t)\geq \tanh\left(\frac{d(x,M)}{\sqrt{t}}-5\sqrt{n+1}\right)
    \end{align}
    and on $d(x,M)\leq -5\sqrt{n+1}\sqrt{t}$
    \begin{align}
        u(x,t)\leq \tanh\left(\frac{d(x,M)}{\sqrt{t}}+5\sqrt{n+1}\right)
    \end{align}
    
\end{lemma}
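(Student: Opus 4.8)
The plan is to obtain both inequalities from the comparison principle for \ref{eAC}, reducing in each case to the heat flow $\hd$ of the signed distance via the estimate $\norm{d(x,M)-\hd(x,t)}\le\sqrt{(n+1)t}$ recorded above. Note first that $\norm{u_0}_\infty=1$, so Theorem~\ref{exist} gives $\norm{u(\cdot,t)}_\infty\le 1$; on this range $f$ is Lipschitz, so differences of sub/super-solutions satisfy a linear parabolic inequality and the usual maximum principle applies. I describe the lower bound; the upper one is identical after replacing $\hd$ by $-\hd$.

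Fix $C:=4\sqrt{n+1}$ (so $C\ge 4$) and write $\tilde g(x,t):=\tanh\!\bigl(\hd(x,t)/\sqrt t-C\bigr)$. By the barrier theorem for $\tanh(\hd/\sqrt t)$ and the remark following it, $\tilde g$ is a subsolution of \ref{eAC} on $\{\hd>4\sqrt t\}$, hence on the smaller region $\{\hd>C\sqrt t\}$. Since $\tilde g$ is a subsolution only on this moving region I would not compare it with $u$ directly, but would truncate it at the stable value $-1$ and work with
\begin{align*}
    \underline w(x,t):=\begin{cases}\tilde g(x,t), & \hd(x,t)>C\sqrt t,\\ -1, & \hd(x,t)\le C\sqrt t.\end{cases}
\end{align*}
On $\{\hd>C\sqrt t\}$ this is a subsolution; on $\{\hd<C\sqrt t\}$ the constant $-1$ is a stationary solution, as $f(-1)=0$; and across the interface $\{\hd=C\sqrt t\}$, where $\tilde g=\tanh(0)=0$, the function only jumps downward, so $\underline w$ is a (lower-semicontinuous, weak) subsolution of \ref{eAC} on $\bR^{n+1}\times(0,\infty)$. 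The truncation is arranged so that $\underline w\le u_0$ initially: since $\hd(\cdot,0)=d(\cdot,M)$, at every $x$ with $d(x,M)>0$ one has $\hd(x,t)/\sqrt t\to+\infty$, so $\underline w(x,0^+)=1=u_0(x)$; while at every $x$ with $d(x,M)\le 0$ the bound $\hd(x,t)\le d(x,M)+\sqrt{(n+1)t}\le\sqrt{(n+1)t}\le C\sqrt t$ puts $(x,t)$ in the truncated region for all small $t$, so $\underline w(x,t)=-1=u_0(x)$.

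Granting the comparison principle for this competitor, $u\ge\underline w$ on $\bR^{n+1}\times(0,\infty)$; in particular $u(x,t)\ge\tilde g(x,t)$ wherever $\hd(x,t)>C\sqrt t$. To finish, suppose $d(x,M)>5\sqrt{n+1}\sqrt t$. Then $\hd(x,t)\ge d(x,M)-\sqrt{(n+1)t}>4\sqrt{n+1}\sqrt t=C\sqrt t$, so the previous bound applies, and since $\tanh$ is increasing and $\hd(x,t)\ge d(x,M)-\sqrt{(n+1)t}$,
\begin{align*}
    u(x,t)\ \ge\ \tanh\!\Bigl(\frac{\hd(x,t)}{\sqrt t}-4\sqrt{n+1}\Bigr)\ \ge\ \tanh\!\Bigl(\frac{d(x,M)}{\sqrt t}-\sqrt{n+1}-4\sqrt{n+1}\Bigr)\ =\ \tanh\!\Bigl(\frac{d(x,M)}{\sqrt t}-5\sqrt{n+1}\Bigr),
\end{align*}
which is the first estimate on $\{d>5\sqrt{n+1}\sqrt t\}$; the boundary case $d=5\sqrt{n+1}\sqrt t$ then follows from continuity of $u$. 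The second estimate is obtained the same way from the supersolution $\tanh(\hd/\sqrt t+C)$ on $\{\hd<-C\sqrt t\}$, capped at $+1$ on $\{\hd\ge -C\sqrt t\}$ to produce a global supersolution lying above $u_0$.

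The step I expect to be the genuine obstacle is justifying the comparison $u\ge\underline w$ for the truncated, discontinuous competitor — equivalently, ruling out that $\underline w-u$ attains a positive maximum on the moving interface $\{\hd=C\sqrt t\}$, where $\underline w$ is not a classical subsolution. Intuitively this is harmless because the jump is a downward jump into the stable phase $u=-1$; to make it rigorous one can approximate $\underline w$ from below by smooth subsolutions, or argue in the viscosity framework, or replace the global argument by a first-time-of-failure argument on $\{\hd>C\sqrt t\}$ in which the behaviour of $u$ on the slice $\{\hd=C\sqrt t\}$ (where $d(x,M)\ge 3\sqrt{n+1}\sqrt t$ and $\tilde g=0$) is controlled — for small times from Duhamel's formula, $u(\cdot,t)\ge e^{t\Delta}u_0-C_\varepsilon t$ with $e^{t\Delta}u_0$ bounded below by a fixed positive constant since that slice lies a definite distance inside $\Omega$. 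All remaining ingredients are the barrier computations already carried out and the elementary estimate $\norm{d-\hd}\le\sqrt{(n+1)t}$.
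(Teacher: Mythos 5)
Your argument is, at heart, the same as the paper's: the barrier is $\tilde g=\tanh(\hd/\sqrt t-C)$ with $C=4\sqrt{n+1}$, the comparison happens on the moving domain $\{\hd>C\sqrt t\}$, the boundary of that domain is handled by the elementary observation that $u>0$ there (because that slice lies a definite distance inside $\Omega$, compare with the heat flow), and the passage from $\hd$ to $d$ uses $\norm{d-\hd}\le\sqrt{(n+1)\,t}$. The paper does exactly this directly: on $\{\hd=C\sqrt t\}$ one has $\tilde g=0\le u$, so $\tilde g$ is a lower barrier on $\{\hd\ge C\sqrt t\}$ by parabolic comparison on that moving region, and then the inequality $\hd\ge d-\sqrt{(n+1)t}$ gives the stated bound with $5\sqrt{n+1}$. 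Your ``first-time-of-failure on $\{\hd>C\sqrt t\}$'' alternative at the end is this argument.

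The truncation detour, however, introduces a real misstep. You define $\underline w$ to equal $\tilde g$ on $\{\hd>C\sqrt t\}$ and $-1$ on $\{\hd\le C\sqrt t\}$, and claim that because the jump at the interface is downward, $\underline w$ is a lower-semicontinuous weak subsolution. The semicontinuity is backwards for your purposes: in the viscosity framework a subsolution must be \emph{upper} semicontinuous for the comparison principle to apply, and your $\underline w$ is lower but not upper semicontinuous at the interface (its value there is $-1$, while the limit from the $\tilde g$ side is $0$). Passing to the usc envelope replaces the interface value by $0$, and then one must check the viscosity subsolution property at interface points where a test function touches from above — which is not free, and is morally the same work as the direct moving-boundary comparison you defer to. Your first suggested fix (mollifying $\underline w$ from below to get smooth subsolutions) also does not work out of the box: convolution does not commute with the nonlinearity $f$, so the mollification of a subsolution need not be one. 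The clean route is to drop the truncation entirely and compare $u$ with $\tilde g$ on $\{\hd\ge C\sqrt t\}$, with the parabolic boundary handled as you sketch in your third alternative; that is what the paper does. One small additional caution: the remark in the paper asserts the shifted barrier works with the ``same distance restriction'' $\hd>4\sqrt t$, but your own restriction to $\hd>C\sqrt t$ is actually necessary, not merely conservative — on $4\sqrt t<\hd<C\sqrt t$ the sign of $\tanh(\hd/\sqrt t-C)$ flips and the subsolution inequality can fail; the paper's proof of this lemma also only uses it on $\hd\ge C\sqrt t$, so you landed in the right place.
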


\begin{proof}
    We first recall that 
        \begin{align*}
            \tilde{g}(x,t)=\tanh\left(\frac{\hd(x,M)}{\sqrt{t}}-4\sqrt{n+1}\right)
        \end{align*}
    is a sub-solution on $\hd(x,t)\geq 4\sqrt{n+1}\sqrt{t}$. Using an elementry comparison to the heat flow, we conclude $u>0$ on $d(x,M)\geq2\sqrt{n+1}\sqrt{t}$, thus, on the boundary $\hd(x,t)=4\sqrt{n+1}\sqrt{t}$, $\tilde{g}(x,t)\leq u(x,t)$. We deduce $\tilde{g}$ is a barrier to $u$ from below on this region.
    To conclude the theorem, we observe, 
        \begin{align}
            \tanh\left(\frac{d(x,M)}{\sqrt{t}}-5\sqrt{n+1}\right)\leq \tanh\left(\frac{\hd(x,t)}{\sqrt{t}}-4\sqrt{n+1}\right)
        \end{align}
        where we have used $\hd(x,t)\geq d(x,M)-\sqrt{n+1}\sqrt{t}$.
\end{proof}

\begin{definition}
    As demonstrated by Soner, \cite{Soner2}, $\norm{Du}^2$ is a subsolution to the following second order parabolic equation.
    \begin{align}
       \left(\partial_t-\Delta+ \frac{2}{\varepsilon^2}(3u^2-1)\right) \phi = 0 \label{deriveqn}
    \end{align}
\end{definition}

\begin{definition} We define the function
    \begin{align}
        \psi = \frac1{t}\exp\left(-\frac{\hd^2(x,t)}{2t}\right)
    \end{align}
\end{definition}
\begin{theorem}
    The function $\psi$ is a super-solution to \ref{deriveqn} on $|\hd|>5\sqrt{t}$.
\end{theorem}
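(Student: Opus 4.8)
The plan is to verify directly that $(\partial_t - \Delta)\psi + \tfrac{2}{\varepsilon^2}(3u^2-1)\psi \geq 0$ on the region $|\hd| > 5\sqrt{t}$, by computing the derivatives of $\psi$ and exploiting the fact that $\hd$ is caloric with $|D\hd| \leq 1$. First I would compute $\partial_t \psi$, using $\partial_t \hd$ and the product/chain rule: since $\psi = t^{-1}e^{-\hd^2/2t}$, we get a term from differentiating $t^{-1}$, a term $-\tfrac{\hd \partial_t \hd}{t}\psi$, and a term $+\tfrac{\hd^2}{2t^2}\psi$. Next I would compute $\Delta \psi$: writing $\psi = t^{-1}e^{-\hd^2/2t}$, one has $\partial_i \psi = -\tfrac{\hd \partial_i \hd}{t}\psi$ and then $\partial_{ii}\psi = \left(-\tfrac{(\partial_i\hd)^2 + \hd\,\partial_{ii}\hd}{t} + \tfrac{\hd^2 (\partial_i\hd)^2}{t^2}\right)\psi$, so that $\Delta\psi = \left(-\tfrac{|D\hd|^2 + \hd\,\Delta\hd}{t} + \tfrac{\hd^2|D\hd|^2}{t^2}\right)\psi$.

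The key simplification is that the caloric equation $\partial_t \hd = \Delta \hd$ cancels the two terms involving $\hd\,\partial_t\hd$ and $\hd\,\Delta\hd$ against each other when forming $(\partial_t - \Delta)\psi$. After this cancellation, dividing through by the positive factor $\psi$, the heat operator contributes
\begin{align*}
  \frac{(\partial_t - \Delta)\psi}{\psi} = -\frac{1}{t} + \frac{\hd^2}{2t^2} + \frac{|D\hd|^2}{t} - \frac{\hd^2 |D\hd|^2}{t^2}.
\end{align*}
Adding $\tfrac{2}{\varepsilon^2}(3u^2 - 1)$, I want to show the total is $\geq 0$. The dangerous terms are the potentially negative $-\tfrac{1}{t}$ and $-\tfrac{\hd^2|D\hd|^2}{t^2}$, together with $\tfrac{2}{\varepsilon^2}(3u^2-1)$, which is negative precisely where $u^2 < 1/3$. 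On $|\hd| > 5\sqrt{t}$, the $u$-barrier Lemma \ref{ubarrier} (or its $\hd$-based precursor, the $\tilde g$-barrier with $\hd/\sqrt t$) forces $u^2$ to be very close to $1$ — in particular $u^2 > 1/3$ — so the potential term is in fact $\geq 0$ and can be discarded. It then remains to check the purely heat-operator part is nonnegative on $|\hd| > 5\sqrt{t}$: setting $a = \hd^2/t > 25$ and $b = |D\hd|^2 \in [0,1]$, the expression is $\tfrac{1}{t}\big(-1 + \tfrac{a}{2} + b - ab\big) = \tfrac1t\big(-1 + b + a(\tfrac12 - b)\big)$; for $b \leq 1/2$ this is $\geq \tfrac1t(-1 + a/2 \cdot \text{something} ) $ — more carefully, $-1 + b + a(\tfrac12 - b) \geq -1 + 0 + 25\cdot 0 = -1$ is not obviously enough, so I would instead note that when $b \le 1/2$ the term $a(\tfrac12-b)\ge 0$ and $-1+b$ could be as low as $-1$; the genuine control comes from combining with the potential term, OR from a sharper use of the region. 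In fact the clean route is: on $|\hd|>5\sqrt t$ we have $a>25$, and $-1+b+a(\tfrac12-b)$ is minimized over $b\in[0,1]$ either at $b=1$ (giving $-1+1+a(-\tfrac12)=-a/2<0$, bad) — so the heat part alone is \emph{not} nonnegative, and the potential term is essential. Thus I would keep $\tfrac{2}{\varepsilon^2}(3u^2-1)$ and argue the sum is nonnegative: the $u$-barrier gives $3u^2 - 1 \geq c > 0$ on this region uniformly, while the bad term $-\tfrac{\hd^2|D\hd|^2}{t^2}$ is bounded in absolute value by $\tfrac{\hd^2}{t^2}$, which must be absorbed.

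The main obstacle, therefore, is the competition between the geometric term $-\tfrac{\hd^2}{t^2}$ (unbounded as $|\hd|/\sqrt t \to \infty$) and the potential term $\tfrac{2}{\varepsilon^2}(3u^2-1)$: one needs the barrier for $u$ to give exponentially good control, namely $1 - u^2 \lesssim \exp(-c\,\hd^2/t)$ type decay from $u \geq \tanh(\hd/\sqrt t - 5\sqrt{n+1})$, so that $\tfrac{2}{\varepsilon^2}(3u^2-1)$ dominates $\tfrac{\hd^2}{t^2}$ once $|\hd|/\sqrt t$ is large (using $1-\tanh^2(s) = \operatorname{sech}^2(s) \le 4e^{-2s}$, and that $\tfrac{2}{\varepsilon^2} \ge 2$ for $\varepsilon \le 1$, while $\tfrac{\hd^2}{t^2}\psi$ against $\tfrac{2}{\varepsilon^2}\psi$ — here the $t^{-1}$ homogeneity and the restriction $\varepsilon<1$ matter). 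I would track constants carefully to confirm the threshold $5\sqrt t$ (rather than, say, $4\sqrt t$) is exactly what makes $-1 + b + a(\tfrac12 - b) + (\text{potential})\cdot t \ge 0$ hold, closing the argument.
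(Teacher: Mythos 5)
Your direct computation of the derivatives is correct; in particular you correctly get
\begin{align*}
\partial_{ii}\psi = \Big(-\frac{(\partial_i\hd)^2 + \hd\,\partial_{ii}\hd}{t^2} + \frac{\hd^2(\partial_i\hd)^2}{t^3}\Big)e^{-\hd^2/(2t)}.
\end{align*}
The paper's corresponding formula carries a spurious factor of $\tfrac12$ in the last term, and that error propagates: the paper obtains $(\partial_t-\Delta)\psi = \tfrac{1-|D\hd|^2}{t^2}\big(\tfrac{\hd^2}{2t}-1\big)e^{-\hd^2/(2t)}$, which is nonnegative on $\hd^2>2t$ because $|D\hd|\le1$, and then the potential term $\tfrac{2}{\varepsilon^2}(3u^2-1)\psi$ is nonnegative because the $u$-barrier forces $|u|>\tanh(1)>1/\sqrt3$ on $|\hd|>5\sqrt t$. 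With the corrected derivative, the heat part is instead $\tfrac{1}{t}\big[(b-1)+\tfrac{\hd^2}{t}(\tfrac12-b)\big]$ with $b=|D\hd|^2$, which — exactly as you observe — fails to be nonnegative (at $b=1$ it equals $-\hd^2/(2t^2)$). So you have actually identified a computational flaw in the paper's proof, not a gap in your own.

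Your proposed repair does not close the gap, though, for a reason you half-notice yourself: the potential term $\tfrac{2}{\varepsilon^2}(3u^2-1)$ is bounded above by $\tfrac{4}{\varepsilon^2}$ no matter how close $u$ is to $\pm1$, so no exponential improvement in $1-u^2$ can absorb the unbounded deficit $\hd^2|D\hd|^2/(2t^2)$, which blows up as $|\hd|/\sqrt t\to\infty$ (and $|D\hd|$ is essentially $1$ far from $M$, so that factor offers no rescue). With this particular $\psi$ the statement cannot be saved by tracking constants.

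The natural repair is to change the comparison function to the one-dimensional heat-kernel form $\tilde\psi := t^{-1/2}e^{-\hd^2/(4t)}$. Since $\hd$ is caloric with $|D\hd|\le1$, the same computation gives
\begin{align*}
(\partial_t-\Delta)\tilde\psi = \frac{(1-|D\hd|^2)(\hd^2-2t)}{4t^2}\,\tilde\psi \ge 0 \quad\text{on } \hd^2\ge 2t,
\end{align*}
and adding the nonnegative potential term finishes the super-solution property on $|\hd|>5\sqrt t$. This $\tilde\psi$ also yields $|Du|^2\lesssim t^{-1/2}e^{-d^2/(3t)}$ in Lemma \ref{gradientbarrier}, which only helps in Theorem \ref{finiteenergy}.
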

\begin{remark}
    It should be possible to refine the distance, though it is not needed for our argument.
\end{remark}
\begin{proof}
    We calculate
        \begin{align*}
            \partial_t \psi &= \left(-\frac{1}{t^2}-\frac{\hd\partial_t\hd}{t^2}+\frac{\hd^2}{2t^3}\right)\exp\left(-\frac{\hd^2(x,t)}{2t}\right)\\
            \partial_i \psi &= \left(-\frac{\hd\partial_i\hd}{t^2}\right)\exp\left(-\frac{\hd^2(x,t)}{2t}\right)\\
            \partial_{ii} \psi &= \left(-\frac{\hd\partial_{ii}\hd}{t^2}  -\frac{(\partial_i\hd)^2}{t^2}+\frac{(\hd\partial_i\hd)^2}{2t^3}\right)\exp\left(-\frac{\hd^2(x,t)}{2t}\right)
        \end{align*}
    Hence,
        \begin{align*}
            (\partial_t-\Delta)\psi=\left(\frac{(\partial_i\hd)^2-1}{t^2}+ \hd^2\frac{1-(\partial_i\hd)^2}{2t^3}\right)\exp\left(-\frac{\hd^2(x,t)}{2t}\right)
        \end{align*}
        Since $1-(\partial_i \hd)^2\geq 1$, if $\hd^2>2t$, then $\psi$ is a super-caloric function. In order to be a solution to \ref{deriveqn}, we need to control the final term. 
        Following the proof of Lemma \ref{ubarrier} we may estimate the solution $u$ using the barrier $\tilde{g}$. We deduce $\norm{u}>\tanh(1)>\frac1{\sqrt{3}}$ on $|\hd|\geq 5\sqrt{t}$, thus $3u^2-1>0$, from which we conclude the theorem.
\end{proof}
\begin{lemma}[Gradient Barrier Lemma]\label{gradientbarrier} Let $u$ solve \ref{eAC} with initial data $u_0$. Then
    On $|d(x,M)|\geq 6\sqrt{n+1}\sqrt{t}$
    \begin{align}
        |Du|^2(x,t)\leq \frac{C}{t}\exp\left(-\frac{d^2(x,M)}{3t}\right)
    \end{align}
    for some constant $C$ depending on the dimension.
    
\end{lemma}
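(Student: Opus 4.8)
The natural approach is a parabolic comparison between $\phi:=|Du|^{2}$ and a dimensional multiple of $\psi$, followed by trading $\hd$ for the honest distance $d$. Write $\Sigma^{+}=\{(x,t):t>0,\ d(x,M)>6\sqrt{n+1}\sqrt t\}$ and let $\Sigma^{-}$ be the analogous region with $d<-6\sqrt{n+1}\sqrt t$. By the third property of $\hd$, on $\Sigma^{\pm}$ one has $|\hd(x,t)|\ge|d(x,M)|-\sqrt{(n+1)t}\ge5\sqrt{n+1}\sqrt t>5\sqrt t$, so the preceding theorem shows $\psi$ is a supersolution of $(\ref{deriveqn})$ there; moreover the $u$-barrier Lemma \ref{ubarrier} gives $|u|\ge\tanh\sqrt{n+1}>1/\sqrt3$ on $\Sigma^{\pm}$, so the zeroth-order coefficient $\tfrac2{\varepsilon^{2}}(3u^{2}-1)$ of $(\ref{deriveqn})$ is positive there and (being in any case bounded below) the comparison principle applies, using the decay of $\phi$ and $\psi$ at spatial infinity. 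Since Soner's observation makes $\phi$ a subsolution of $(\ref{deriveqn})$ everywhere, it suffices to produce a dimensional constant $A$ with $\phi\le A\psi$ on the parabolic boundary of $\Sigma^{\pm}$.

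The parabolic boundary has two parts: the slice near $t=0$ together with spatial infinity, and the lateral part $\{|d(x,M)|=6\sqrt{n+1}\sqrt t\}$. For the former I would write $u$ by Duhamel's formula with $u_{0}=2\chi_{\Omega}-1$; since $u_{0}$ is locally constant away from $M$, both $De^{t\Delta}u_{0}$ and the gradient of the Duhamel term carry a Gaussian factor centred on $M$, and Laplace's method yields $\phi(x,t)\lesssim_{n}t^{-1}e^{-d(x,M)^{2}/(2t)}$, while $\hd(x,t)=d(x,M)+O(t)$ at a fixed point gives $\psi(x,t)\asymp_{n}t^{-1}e^{-d(x,M)^{2}/(2t)}$; so $\phi\le A\psi$ there for $A$ large. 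On the lateral part $t\asymp d(x,M)^{2}$ and $\psi\ge\tfrac1t e^{-49(n+1)/2}$, so it is enough to establish the uniform bound $|Du|^{2}\le C_{n}/t$ on $\{|d|=6\sqrt{n+1}\sqrt t\}$: for $t\le\varepsilon^{2}$ this is immediate from Corollary \ref{derivative} (then $\sqrt t/\varepsilon^{2}\le1/\sqrt t$), and for $t\ge\varepsilon^{2}$ one parabolically rescales by $\varepsilon$, reducing to an interior gradient estimate for the unit-scale flow at a point whose distance to the interface is a fixed multiple of $\sqrt{\tilde t}$ — which one either obtains by running the same comparison on $\Sigma^{\pm}\cap\{t\ge\varepsilon^{2}\}$ with the bound at $t=\varepsilon^{2}$ from Corollary \ref{derivative} as initial data, or argues directly at unit scale using that a compact interface stays in a fixed bounded set while $6\sqrt{n+1}\sqrt{\tilde t}\to\infty$.

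Granting $\phi\le A\psi$ on $\Sigma^{\pm}$, the lemma follows: when $|d(x,M)|>6\sqrt{n+1}\sqrt t$ the third property of $\hd$ gives $\hd^{2}\ge d^{2}-2|d|\sqrt{(n+1)t}$, hence $\psi=\tfrac1t e^{-\hd^{2}/2t}\le\tfrac1t\exp\!\big(-\tfrac{d^{2}}{2t}+|d|\sqrt{(n+1)/t}\big)\le\tfrac{e^{3(n+1)/2}}{t}e^{-d^{2}/(3t)}$, since $s\mapsto-\tfrac16 s^{2}+\sqrt{n+1}\,s$ is at most $\tfrac{3(n+1)}2$; absorbing the constant into $C$ gives $|Du|^{2}\le\tfrac Ct e^{-d^{2}/(3t)}$.

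The step I expect to be the main obstacle is the lateral-boundary estimate in the regime $t\ge\varepsilon^{2}$: after rescaling, the required bound $|Du|^{2}\lesssim1/t$ at distance of order $\sqrt t$ from $M$ is essentially the unit-scale case of the lemma itself, so one must either close a bootstrap (running the comparison from time $\varepsilon^{2}$ onwards) or supply a self-contained fixed-scale interior gradient estimate for the Allen--Cahn flow with bounded interface. Everything else is either cited (the supersolution property of $\psi$, Soner's subsolution property of $|Du|^{2}$) or a routine heat-kernel computation.
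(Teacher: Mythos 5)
Your proposal follows essentially the same strategy as the paper: compare Soner's subsolution $|Du|^2$ against a constant multiple of the supersolution $\psi$ on the region where $\psi$ is supercaloric and $3u^2-1>0$, verify the comparison on the parabolic boundary (zero initial gradient away from $M$, and the a priori bound $|Du|^2\lesssim 1/t$ from Corollary~\ref{derivative} on the lateral slice), and then trade $\hd$ for $d$ using $|\hd-d|\le\sqrt{(n+1)t}$ together with the working-region restriction $|d|\ge 6\sqrt{n+1}\sqrt t$ to absorb the cross term. The paper performs the comparison on $\{|\hd|\ge 5\sqrt{n+1}\sqrt t\}$ and converts at the end, whereas you phrase everything in $\Sigma^\pm=\{|d|>6\sqrt{n+1}\sqrt t\}$; these are equivalent.

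The obstacle you flag for $t\ge\varepsilon^2$ is a genuine one: Corollary~\ref{derivative} gives $|Du|^2\le C/t$ on the lateral boundary only for $t\in(0,\varepsilon^2)$, and the paper's proof checks the lateral bound only there without comment, so as written the lemma (stated with no restriction on $t$) is only established on $(0,\varepsilon^2)$. This causes no harm downstream because Theorem~\ref{finiteenergy} and its consequences only invoke the lemma for $t\in(0,\varepsilon^2)$, the extension past $\varepsilon^2$ being handled by monotonicity of the energy (see the remark after Theorem~\ref{finiteenergy}); so the bootstrap or fixed-scale interior estimate you sketch is not required for any application, though it would be needed to justify the lemma's unrestricted statement. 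Two minor slips in your write-up: $\hd(x,t)=d(x,M)+O(\sqrt t)$, not $O(t)$, so $\psi$ and $t^{-1}e^{-d^2/(2t)}$ are not two-sidedly comparable in the way you claim near $t=0$; fortunately the argument only needs the one-sided comparison that both vanish at fixed $x\notin M$ as $t\downarrow 0$, which is the paper's (equally terse) observation. Also the paper's constant $|Du|^2\le 2/t$ on $(0,\varepsilon^2)$ should read $4/t$ after squaring $2/\sqrt t$; this is inconsequential.
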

\begin{proof}
    Recall, $\psi$ is a super-solution on $\hd(x,t)\geq 5\sqrt{t}$, so we can find a choice of $\tilde{C}$ such that $\tilde{C}\psi$ is a barrier:
    The gradient satisfies
        \begin{align*}
            |Du|^2\leq \left(\frac{1}{\sqrt{t}}+\frac{\sqrt{t}}{\varepsilon^2}\right)^2
        \end{align*}
    on $(0,\varepsilon^2)$, we deduce
    \begin{align*}
        |Du|^2\leq \frac{2}{t}
    \end{align*}
    Setting $\tilde{C}=2\exp(\frac{25(n+1)}{2})$ we have, on $|\hd(x,t)|=5\sqrt{n+1}\sqrt{t}$
        \begin{align*}
            \frac{\tilde{C}}{t}\psi=\frac{2}{t}\exp\left(\frac{25(n+1)}{2}-\frac{25(n+1)}{2}\right)=\frac{2}{t}
        \end{align*}
    Since the gradient is initially 0 away from $M$, and we have control on the $\hd = 5\sqrt{n+1}\sqrt{t}$ boundary, we see $\tilde{C}\psi$ is a barrier on $\hd\geq 5\sqrt{n+1}\sqrt{t}$.
    
    To switch to the distance $d$, we recall $\norm{d(x,M)-\hd(x,t)}\leq \sqrt{n+1}\sqrt{t}$, and thus we must consider the region $d(x,M)\geq 6\sqrt{n+1}\sqrt{t}$ in order for $\psi$ to be a super solution.
    
    In order to change the argument from $\hd$ to $d$, we note
    \begin{align*}
        -\hd(x,t)^2&\leq -d(x,M)^2+2d(x,M)\sqrt{n+1}\sqrt{t}-(n+1)t\\
        &\leq -\frac{2}{3}d(x,M)^2-(n+1)t
    \end{align*}
    where we have used $|d(x,M)|\geq 6\sqrt{n+1}\sqrt{t}$. We conclude the lemma by setting $C=\tilde{C}\exp(-\frac{n+1}{2})$.
\end{proof}
\subsection{Energy Estimates}
    We need the following results for level-sets of the distance function.
        \begin{theorem}\cite[Theorem 6]{Caraballo}\label{n-1est}
            Let $M$ be a generalised hypersurface. i.e. $M\subset \bR^{n+1}$ compact, with $\cH^{n}(M)<\infty$, denote by $\Gamma_r=\{x\in \bR^{n+1}: d(x,M)=r\}$, the Euclidean distance to $M$. Suppose $M$ satisfies the following lower density bound: there exists $\delta,\theta>0$ such that
                \begin{align*}
                    \frac{\cH^{n}(M\cap B^{n+1}(p,\gamma))}{\gamma^{n-1}}\geq \theta
                \end{align*}
            for all $p \in M$ and $\gamma\in(0,\delta]$. Then, for $\cL^1$-almost every $r\in(0,\infty)$,
                \begin{align*}
                    \cH^{n}(\Gamma_r)\leq \left(\frac{\mu_{n+1}\beta(n+1)}{\theta}\right)\sup\left\{1, \left(\frac{r}{\delta}\right)^n\right\} \cH^{n}(M)
                \end{align*}
            Where $\beta(n+1)$ is the optimal constant from the Besicovitch Covering Theorem and 
            \begin{align*}
            \mu_{n+1}=\alpha(n+1)4^{n+1}(1+\sqrt{n})^2,
            \end{align*}
             where $\alpha(n+1)$ is the volume of the unit ball.
        \end{theorem}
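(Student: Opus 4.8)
\emph{Plan.} The estimate rests on the observation that the sublevel sets of the distance function are unions of balls, which makes $\Gamma_r$ ``$r$-regular'', and on using the lower density bound to convert counts of $r$-scale balls into bounds on $\cH^n(M)$. First I would record that $E_r := \{x : d(x,M)\le r\} = \bigcup_{p\in M}\overline{B^{n+1}(p,r)}$, so $E_r$ satisfies a uniform interior ball condition of radius $r$: through every $x\in\partial E_r$ passes a ball $\overline{B^{n+1}(p,r)}\subset E_r$, for $p$ a nearest point of $M$ to $x$. Standard consequences of such a one-sided ball condition are that $\partial E_r\cap B^{n+1}(x,r/4)$ is covered by a dimensionally bounded number of Lipschitz graphs with dimensional Lipschitz constants, whence
\[
\cH^n\big(\partial E_r\cap B^{n+1}(x,r/4)\big)\ \le\ c_1(n)\,r^n\qquad\text{for every }x\in\partial E_r,
\]
and that, by the coarea formula for $d(\cdot,M)$ together with $|\nabla d|=1$ a.e., for $\cL^1$-a.e.\ $r$ one has $\cH^n(\Gamma_r)=\cH^n(\partial E_r)$; here $\Gamma_r\setminus\partial E_r$ is contained in the set of local maxima of $d(\cdot,M)$, on which $\nabla d=0$, which coarea annihilates for a.e.\ $r$. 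Fix such an $r$.

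Next I would cover and charge at scale $r$. Since $M$, hence $\Gamma_r$, is compact, the $5r$-covering lemma provides finitely many $x_1,\dots,x_N\in\Gamma_r$ with $\Gamma_r\subset\bigcup_i B^{n+1}(x_i,r/4)$ and the balls $B^{n+1}(x_i,r/20)$ pairwise disjoint. Pick $p_i\in M$ with $|x_i-p_i|=r$ and set $\rho:=\min\{r/20,\delta\}$. The density hypothesis gives $\cH^n\big(M\cap B^{n+1}(p_i,\rho)\big)\ge\theta\rho^n$, and these sets overlap boundedly: if $p_i,p_j\in B^{n+1}(q,\rho)$ then $x_i,x_j\in B^{n+1}(q,2r)$, and a ball of radius $2r$ contains at most $c_2(n)$ points that are $(r/20)$-separated --- the one spot where a Besicovitch-type constant $\beta(n{+}1)$ enters. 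Hence $N\theta\rho^n\le\sum_i\cH^n\big(M\cap B^{n+1}(p_i,\rho)\big)\le c_2(n)\cH^n(M)$, and combining with the display,
\[
\cH^n(\Gamma_r)\ \le\ \sum_{i=1}^N\cH^n\big(\Gamma_r\cap B^{n+1}(x_i,r/4)\big)\ \le\ N\,c_1(n)\,r^n\ \le\ \frac{c_1(n)c_2(n)}{\theta}\Big(\tfrac{r}{\rho}\Big)^{\!n}\cH^n(M).
\]
Since $(r/\rho)^n=\max\{20^n,(r/\delta)^n\}$, this is the claimed inequality after tracking the dimensional constants into $\mu_{n+1}\beta(n{+}1)$.

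I expect the only real work to be in the first step: upgrading the one-sided (radius $r$) interior ball condition to the quantitative fact that $\partial E_r$ has $\cH^n$-mass $\lesssim r^n$ in every $r/4$-ball, and justifying $\cH^n(\Gamma_r)=\cH^n(\partial E_r)$ for a.e.\ $r$, i.e.\ that the ``ridge'' on which the nearest point of $M$ is non-unique is $\cH^n$-negligible in $\Gamma_r$ for a.e.\ $r$. Both are classical but need care. Everything afterwards is elementary covering combinatorics, and the point that makes it close is that one covers at the scale $r$ on which $\Gamma_r$ is regular, rather than at an arbitrarily fine scale.
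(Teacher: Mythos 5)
Your strategy is essentially the same as the paper's (which follows Caraballo): a covering argument at scale $r$, a local regularity estimate $\cH^n(\Gamma_r\cap B(\cdot,O(r)))\lesssim r^n$, and the lower density bound on $M$ to convert the count of $r$-balls into $\cH^n(M)$. The differences in execution are worth noting. The paper covers $M$ itself by $r$-balls with bounded overlap via the Besicovitch covering theorem, which immediately gives $\sum_i\cH^n(M\cap B(p_i,r))\le\beta(n{+}1)\cH^n(M)$, and then observes $\Gamma_r\subset\bigcup_i B(p_i,2r)$ by the triangle inequality; you instead cover $\Gamma_r$ by a $5r$-type lemma, push centers to nearest points $p_i\in M$, and prove bounded overlap of the $B(p_i,\rho)$ by a separate volume count. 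Both are correct; the paper's version is a little cleaner because it never needs to transport the cover from $\Gamma_r$ back to $M$, and it produces the stated constant $\mu_{n+1}\beta(n+1)/\theta\cdot\sup\{1,(r/\delta)^n\}$ exactly, whereas your $\rho=\min\{r/20,\delta\}$ gives an extra $20^n$ in the regime $r\lesssim\delta$.

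The one place where you undersell the difficulty is the local regularity step. You frame $\cH^n(\partial E_r\cap B(x,r/4))\le c_1(n)r^n$ as a ``standard consequence'' of the interior ball condition via coverage by boundedly many Lipschitz graphs. That is not quite right as stated: the boundary of a union of $r$-balls can have necks and tangencies at scale $r$ that are not Lipschitz graphs, so the graph-covering claim needs either a direction-bucketing argument (split $\partial E_r$ by the orientation of the interior-ball normal and use one-sided semi-concavity to get Lipschitz graphs in each bucket) or a different route (inward normal map / positive-reach / perimeter-of-sublevel-set arguments). This local $r^n$ bound is precisely what the paper takes as its main cited lemma, Caraballo's Proposition 13, so you are right that this is ``the only real work,'' but it deserves a proof or a citation rather than being labeled standard. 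Your reduction $\cH^n(\Gamma_r)=\cH^n(\partial E_r)$ for a.e.\ $r$ via the coarea formula and $\nabla d=0$ a.e.\ on the set of local maxima is correct, though the paper handles this implicitly by working with $\rho^{-1}(\{r\})$ throughout and invoking the a.e.\ statement in the theorem. Finally, you implicitly corrected the denominator $\gamma^{n-1}$ in the hypothesis to $\gamma^n$ (as used in step 6); that is the convention consistent with the paper's definition of a generalised hypersurface at $\kappa=0$ and with the exponent $n$ in the conclusion.
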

    \begin{remark}
        We quote the above result as it has a better constant compared to Almgren--Taylor--Wang \cite{ATW}.
    \end{remark}

    We can now estimate the energy.

    \begin{theorem}\label{finiteenergy} Let $M$ be a generalised hypersurface with $\cH^{n}(M)<\infty$. Then, for all $\varepsilon\in(0,1)$ and $t\in(0,\varepsilon^2)$, we have
        \begin{align*}
            \int_{\bR^{n+1}}\varepsilon \frac{\norm{Du}^2}{2}+\frac{(u^2-1)^2}{2\varepsilon} d x \leq C \left(\frac{\varepsilon}{\sqrt{t}}+\frac{\sqrt{t}}{\varepsilon}\right) 
        \end{align*}
        Where $C<\infty$ depends on the dimension, $\cH^{n}(M)$, and the lower density bound on $M$.
    \end{theorem}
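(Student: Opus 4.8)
The plan is to decompose the energy by the coarea formula over the level sets of the signed distance function $d(\cdot,M)$, which is $1$-Lipschitz with $|\nabla d|=1$ almost everywhere; writing $\Gamma_r=\{d(\cdot,M)=r\}$ and letting $u$ solve \ref{eAC} from $u_0=2\chi_\Omega-1$ (so the barriers of Lemmas \ref{ubarrier} and \ref{gradientbarrier} apply),
\[
\int_{\bR^{n+1}}\Big(\varepsilon\tfrac{|Du|^2}{2}+\tfrac{(u^2-1)^2}{2\varepsilon}\Big)\,dx=\int_{-\infty}^{\infty}\int_{\Gamma_r}\Big(\varepsilon\tfrac{|Du|^2}{2}+\tfrac{(u^2-1)^2}{2\varepsilon}\Big)\,d\cH^n\,dr,
\]
and then estimate the inner integral separately on the core $\{|r|\le 6\sqrt{n+1}\sqrt t\}$ and on the tails $\{|r|>6\sqrt{n+1}\sqrt t\}$. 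The uniform ingredient is Theorem \ref{n-1est}, which (with the lower density bound of the generalised hypersurface, $\kappa=0$) gives, for $\cL^1$-a.e.\ $r$, the bound $\cH^n(\Gamma_r)\le C(n,\delta,\theta)\sup\{1,(r/\delta)^n\}\cH^n(M)$; since $t<\varepsilon^2<1$ forces $6\sqrt{n+1}\sqrt t<6\sqrt{n+1}$, on the core this reads $\cH^n(\Gamma_r)\le C\cH^n(M)$.

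On the core I use only the crude bounds $|u|\le 1$ (Theorem \ref{exist}), whence $(u^2-1)^2\le 1$, and $|Du|^2\le C_n/t$ for $t<\varepsilon^2$ (as in the proof of Lemma \ref{gradientbarrier}). Integrating over a band of $r$-width $\sim\sqrt t$ with $\cH^n(\Gamma_r)\le C\cH^n(M)$ produces a contribution $\le C\big(\tfrac{\varepsilon}{\sqrt t}+\tfrac{\sqrt t}{\varepsilon}\big)$. On the positive tail I invoke the barriers: Lemma \ref{gradientbarrier} gives $|Du|^2\le\tfrac{C}{t}e^{-r^2/3t}$ on $\Gamma_r$, while Lemma \ref{ubarrier} together with $|u|\le 1$ and the elementary estimate $0\le 1-\tanh y\le 2e^{-2y}$ for $y\ge 0$ gives $(u^2-1)^2\le C_n e^{-4r/\sqrt t}$ on $\Gamma_r$ for $r\ge 6\sqrt{n+1}\sqrt t$; the negative tail is symmetric via the companion inequality in Lemma \ref{ubarrier}. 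Substituting $r=\sqrt t\,s$, the gradient tail becomes $\tfrac{C\varepsilon}{\sqrt t}\cH^n(M)\int e^{-s^2/3}\sup\{1,(\sqrt t\,s/\delta)^n\}\,ds$ and the potential tail becomes $\tfrac{C\sqrt t}{\varepsilon}\cH^n(M)\int e^{-4s}\sup\{1,(\sqrt t\,s/\delta)^n\}\,ds$; since $\sqrt t\le 1$ the suprema are dominated by $1+(s/\delta)^n$, so the Gaussian, respectively exponential, weights render the $s$-integrals finite constants depending only on $n$ and $\delta$. Adding the core and the two tails yields the asserted bound $C\big(\tfrac{\varepsilon}{\sqrt t}+\tfrac{\sqrt t}{\varepsilon}\big)$, with $C$ depending on $n$, $\cH^n(M)$, and $(\delta,\theta)$.

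I expect the only genuine subtlety to be the tail estimate: one must know that $\cH^n(\Gamma_r)$ grows at most polynomially in $r$ so that the exponential decay of the barriers makes the $r$-integral converge — this is exactly where the lower density bound enters, through Theorem \ref{n-1est} — and, relatedly, one really does need the potential barrier rather than merely $|u|\le 1$, since for compact $M$ the quantity $\int^{\infty}\cH^n(\Gamma_r)\,dr$ diverges. Everything else (measurability for the coarea formula, the fact that $\cH^n(\Gamma_r)<\infty$ only for a.e.\ $r$, and the bookkeeping of dimensional constants) is routine at the level of integrals.
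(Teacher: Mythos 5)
Your proposal is correct and follows essentially the same strategy as the paper: split the energy by the coarea formula over level sets of $d(\cdot,M)$ into a core band of width $\sim\sqrt t$ (handled by $|u|\le 1$ and the crude gradient bound from Corollary~\ref{derivative}) and tails (handled by the barriers of Lemmas~\ref{gradientbarrier} and~\ref{ubarrier}), with Theorem~\ref{n-1est} supplying the polynomial control on $\cH^n(\Gamma_r)$ that makes the tail integrals converge. The only cosmetic differences from the paper's write-up are that you use a single cut at $6\sqrt{n+1}\sqrt t$ for both the kinetic and potential terms (the paper uses $5\sqrt{n+1}\sqrt t$ for the potential) and that you absorb the two cases of the supremum in Theorem~\ref{n-1est} into one substitution $r=\sqrt t\,s$ with the bound $\sup\{1,(\sqrt t\,s/\delta)^n\}\le 1+(s/\delta)^n$, rather than splitting the tail integral at $r=\delta$; both variations are harmless.
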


    \begin{remark}
        Observe, for $\varepsilon\in(0,1)$, and  $t\in(0,\varepsilon^2)$, the upper bound can be bounded by $\frac{2C}{\sqrt{t}}$, which is independent of $\varepsilon$.  At time $t=\varepsilon^2$, the upper bound is equal to $2C$, and since the flow is the gradient flow for the energy, this bound is true for all $t>\varepsilon^2$.
    \end{remark}
    \begin{proof} Our barrier functions are written as functions of this distance. Since the distance function $d(x,M)$ has $\norm{\nabla d(x,M)}=1$ $dx$-a.e. and the level sets have finite $\cH^{n}$ measure (Theorem \ref{n-1est}), we can evaluate the integrals via the co-area formula.
        \begin{remark}
            To streamline the exposition, for the application of Theorem \ref{n-1est} we assume $\delta=1$ and $\varepsilon<\frac1{6\sqrt{n+1}}$. To deal with the other cases, one simply has to split the integral differently.
        \end{remark}
        First, we evaluate the kinetic energy.  We split the integral into two regions:
        \begin{align*}
            \int_{\bR^{n+1}}\varepsilon \frac{\norm{Du}^2}{2}dx= \int_{\norm{d(x,M)}\leq 6\sqrt{n+1}\sqrt{t}}\varepsilon \frac{\norm{Du}^2}{2}dx + \int_{\norm{d(x,M)}\geq 6\sqrt{n+1}\sqrt{t}}\varepsilon \frac{\norm{Du}^2}{2}dx
        \end{align*}
        In the region near the initial hypersurface, we use the a priori derivative estimate:
        \begin{align*}
            \int_{\norm{d(x,M)}\leq 6\sqrt{n+1}\sqrt{t}}\varepsilon \frac{\norm{Du}^2}{2}dx&\leq \int_{-6\sqrt{n+1}\sqrt{t}}^{6\sqrt{n+1}\sqrt{t}}\int_{\rho^{-1}(\{s\})}\frac{\varepsilon}{2} \left(\frac{1}{t}+\frac{t}{\varepsilon^2}+\frac1{\varepsilon^2}\right)\cH^{n}(\Gamma_s)ds\\
            &=C\varepsilon\sqrt{t}\left(\frac{1}{t}+\frac{t}{\varepsilon^2}+\frac1{\varepsilon^2}\right)
        \end{align*}
        In the other region, we use the barrier from Lemma \ref{gradientbarrier}
        \begin{align*}
            \int_{\norm{d(x,M)}\geq 6\sqrt{n+1}\sqrt{t}}\varepsilon \frac{\norm{Du}^2}{2}dx&\leq \int_{\norm{d(x,M)}\geq 6\sqrt{n+1}\sqrt{t}} \frac{C\varepsilon}{t}\exp\left(-\frac{d^2(x,M)}{3t}\right)dx\\
            &\leq\frac{C\varepsilon}{t}\int_{6\sqrt{n+1}\sqrt{t}}^\infty\int_{\rho^{-1}({s})} \exp\left(\frac{-s^2}{3t}\right)\cH^{n}\left(\Gamma_s\right)ds
        \end{align*}
        We split into the two cases of the supremum in Lemma \ref{n-1est}.
        
        First from $6\sqrt{n+1}\sqrt{t}$ to 1:
        \begin{align*}
            \frac{C\varepsilon}{t}\int_{6\sqrt{n+1}\sqrt{t}}^1\int_{\rho^{-1}({s})} \exp\left(\frac{-s^2}{3t}\right)\cH^{n}\left(\Gamma_s\right)ds=\frac{C\varepsilon}{\sqrt{t}}\int_{2\sqrt{n+1}\sqrt{3}}^\infty\exp\left(-v^2\right)dv
        \end{align*}
        and then from $1$ to $\infty$:
        \begin{align*}
            \frac{C\varepsilon}{t}\int_{1}^\infty\int_{\rho^{-1}({s})} \exp\left(\frac{-s^2}{3t}\right)\cH^{n}\left(\Gamma_s\right)ds&=\frac{C\varepsilon}{t}\int_{1}^\infty\int_{\rho^{-1}({s})} \exp\left(\frac{-s^2}{3t}\right)s^{n}\cH^{n}\left(M\right)ds\\
            &=C\varepsilon \left(\sqrt{t}\right)^{n-2}\int_{\frac1{\sqrt{3t}}}^\infty v^{n}\exp\left(-v^2\right)   dv\leq \frac{C\varepsilon}{\sqrt{t}}.
        \end{align*}

        Finally, we estimate the potential energy, this time spliting the integral at  $\norm{d(x,M)}=5\sqrt{n+1}\sqrt{t}$.
        \begin{align*}
            \int_{\bR^{n+1}}\frac{(u^2-1)^2}{2\varepsilon} dx= \int_{\norm{d(x,M)}\leq 5\sqrt{n+1}\sqrt{t}}\frac{(u^2-1)^2}{2\varepsilon} dx+ \int_{\norm{d(x,M)}\geq 5\sqrt{n+1}\sqrt{t}}\frac{(u^2-1)^2}{2\varepsilon} dx
        \end{align*}
         In the region near the hypersurface, we can simply bound $(u^2-1)^2\leq1$, yielding:
        \begin{align*}
            \int_{\norm{d(x,M)}\leq 5\sqrt{n+1}\sqrt{t}}\frac{(u^2-1)^2}{2\varepsilon} dx \leq C\frac{\sqrt{t}}{\varepsilon}
        \end{align*}
        Using Lemma \ref{ubarrier}, we have $u(x,t)>\norm{\tanh\left(\frac{d(x,M)}{\sqrt{t}}-5\sqrt{n+1}\right)}$ on $d(x,M)\geq 5\sqrt{n+1}\sqrt{t}$, and $\norm{u}<1$. Thus:
        \begin{align*}
            \int_{d(x,M)\geq 5\sqrt{n+1}\sqrt{t}}\frac{(u^2-1)^2}{2\varepsilon} &\leq \int_{d(x,M)\geq 5\sqrt{n+1}\sqrt{t}}\frac{(\tanh^2\left(\frac{d(x,M)}{\sqrt{t}}-5\sqrt{n+1}\right)-1)^2}{2\varepsilon} dx\\
            &=\int_{d(x,M)\geq 5\sqrt{n+1}\sqrt{t}}\frac{\operatorname{sech}^4\left(\frac{d(x,M)}{\sqrt{t}}-5\sqrt{n+1}\right)}{2\varepsilon} dx\leq C\frac{\sqrt{t}}{\varepsilon}.
        \end{align*}
        Using Lemma \ref{ubarrier} again, we can, in exactly the same manner, estimate the integral on $d(x,M)\leq -5\sqrt{n+1}\sqrt{t}$.

    \end{proof}

\subsection{Sets with Hausdorff dimension $n+\kappa$}
    In this section, we assume $\kappa\in(0,1)$ .
    
    When $\kappa=1$, we have $\int_a^b 1/s^\kappa ds = \left[\log(s)\right]_a^b$, and thus one cannot integrate from 0 to 1. 
    
    We first prove a fractional dimension version of Lemma \ref{n-1est}
    \begin{lemma}\label{nkappa}
        Suppose $\Omega$ is bounded, with $\cH^{n+\kappa}(\Omega)\in(0,\infty)$ for some $\kappa\in(0,1)$. Suppose further there exists a lower $n+\kappa$ density bound, i.e. there is some $\theta>0$ and $\delta>0$ such that 
        \begin{align*}
            \frac{\cH^{n+\kappa}(\Omega\cap B(x,\gamma))}{\gamma^{n+\kappa}}\geq \theta.
        \end{align*}
        for all $x\in M$ $\gamma\in(0,\delta]$.
        
        Then, for $r\in(0,\infty)$, we have
        \begin{align*}
            \cH^{n}(\Gamma_r)\leq  \left(\frac{\mu_{n+1}\beta(n+1)}{\theta}\right) \sup\left\{r^{-\kappa}, \left(\frac{r^{n}}{\delta^{n+\kappa}}\right)\right\}\cH^{n+\kappa}(M).
        \end{align*}
    \end{lemma}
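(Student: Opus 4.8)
The plan is to adapt the covering argument behind Theorem~\ref{n-1est} (Caraballo~\cite{Caraballo}, after Almgren--Taylor--Wang~\cite{ATW}), carrying the density exponent $n+\kappa$ through the bookkeeping in place of the integer exponent, and to pair it with the coarea formula for the distance function so that one controls a genuine Hausdorff measure rather than merely a pre-measure. Write $d = d(\cdot,M)$. Since $\cH^{n+\kappa}(M)<\infty$ and $n+\kappa<n+1$ we have $\cH^{n+1}(M)=0$, hence $\cL^{n+1}(M)=0$, so $\norm{\nabla d}=1$ holds $\cL^{n+1}$-a.e.\ on $\bR^{n+1}$, and the coarea formula for Lipschitz functions gives, for $0<a<b$,
\begin{align*}
    \int_a^b \cH^{n}(\Gamma_s)\,ds \;=\; \cL^{n+1}\!\left(\{a<d<b\}\right).
\end{align*}
So it suffices to bound, for fixed $r>0$ and the scale $\gamma:=\min\{r,\delta\}$ (a fixed dimensional fraction of it if a later step requires), the volume of the thin shell $S:=\{\,\norm{d-r}<\gamma/2\,\}$; dividing the resulting estimate by the shell width $\gamma$ then bounds the average of $\cH^{n}(\Gamma_s)$ over $s\in(r-\gamma/2,\,r+\gamma/2)$.

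For the shell estimate, cover $M$ by a Besicovitch subfamily $\{B(p_j,\gamma)\}_j$ of balls with $p_j\in M$ and overlap at most $\beta(n+1)$. Since $\gamma\le\delta$, the lower $(n+\kappa)$-density bound applies at this scale, giving $\cH^{n+\kappa}(M\cap B(p_j,\gamma))\ge\theta\gamma^{n+\kappa}$ and hence $\#\{j\}\le\beta(n+1)\,\theta^{-1}\gamma^{-(n+\kappa)}\cH^{n+\kappa}(M)$. If $x\in S$ and $p\in M$ realises $d(x,M)$ then $\norm{x-p}\in(r-\gamma/2,\,r+\gamma/2)$, and if moreover $p\in B(p_j,\gamma)$ then $\norm{x-p_j}\in(r-3\gamma/2,\,r+3\gamma/2)$; thus $S$ lies in the union over $j$ of the annuli $A_j:=\{\,r-3\gamma/2<\norm{x-p_j}<r+3\gamma/2\,\}$. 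Since $\gamma\le r$, one checks $\cL^{n+1}(A_j)\le C(n)\,\alpha(n+1)\,r^{n}\gamma$ (whether or not $3\gamma/2<r$), so
\begin{align*}
    \cL^{n+1}(S)\;\le\;\sum_j\cL^{n+1}(A_j)\;\le\;\frac{C(n)\,\alpha(n+1)\,\beta(n+1)}{\theta}\cdot\frac{r^{n}}{\gamma^{\,n+\kappa-1}}\;\cH^{n+\kappa}(M).
\end{align*}
Dividing by $\gamma$ and using the identity $r^{n}/\gamma^{\,n+\kappa}=r^{n}/\min\{r,\delta\}^{\,n+\kappa}=\sup\{r^{-\kappa},\,r^{n}/\delta^{\,n+\kappa}\}$ yields the stated bound, once one verifies that the dimensional constant $C(n)\alpha(n+1)$ coming out of the Besicovitch cover and the annulus-volume estimate stays below $\mu_{n+1}=\alpha(n+1)4^{n+1}(1+\sqrt n)^2$ — which it does, comfortably.

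The geometry above is routine; the delicate points are the scale bookkeeping and the passage to a pointwise-in-$r$ statement. Concretely: (i) every ball invoked must have radius $\le\delta$ while the argument still extracts the sharp power $r^{n}/\min\{r,\delta\}^{\,n+\kappa}$ — this is precisely why the conclusion is phrased with the supremum $\sup\{r^{-\kappa},\,r^{n}/\delta^{\,n+\kappa}\}$, and why, as the surrounding text notes, the borderline $\kappa=1$ is excluded (there the integral $\int_0^1 s^{-\kappa}\,ds$ needed downstream diverges); (ii) the covering/shell argument bounds only the \emph{average} of $\cH^{n}(\Gamma_s)$ over a short interval about $r$, hence $\cH^{n}(\Gamma_s)$ for $\cL^1$-a.e.\ such $s$, and obtaining the estimate at every individual $r$ requires an extra semicontinuity input for $s\mapsto\cH^{n}(\Gamma_s)$ — which is where I expect any real difficulty to sit — although the $\cL^1$-a.e.\ version already suffices for the coarea evaluation of the $\varepsilon$-Allen--Cahn energy in which this lemma is used. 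Beyond these, the proof is Caraballo's with $n+\kappa$ replacing the integer exponent.
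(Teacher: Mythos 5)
Your route is genuinely different from the paper's. The paper covers $M$ at scale $r$ by Besicovitch, and then invokes \cite[Proposition 13]{Caraballo} — a direct local Hausdorff bound $\cH^n\bigl(\rho^{-1}(\{s\})\cap B(p,2r)\bigr)\le\mu_{n+1}r^n$ for level sets of the distance function — to get the $r^n$ per ball, summing against $N\le\beta(n+1)\theta^{-1}\gamma^{-(n+\kappa)}\cH^{n+\kappa}(M)$. You instead replace the level-set estimate by the coarea identity $\int_a^b\cH^n(\Gamma_s)\,ds=\cL^{n+1}(\{a<d<b\})$ and bound the right-hand side by a union of thin annuli.

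The gap is in your step (ii). The shell estimate gives
\[
\frac{1}{\gamma}\int_{r-\gamma/2}^{r+\gamma/2}\cH^n(\Gamma_s)\,ds\ \le\ \frac{C(n)\,\alpha(n+1)\,\beta(n+1)}{\theta}\cdot\frac{r^n}{\gamma^{n+\kappa}}\,\cH^{n+\kappa}(M),
\]
and you assert that this bound on the average yields the same bound on $\cH^n(\Gamma_s)$ for $\cL^1$-a.e.\ $s$. It does not: an average bound over an interval of \emph{fixed} length $\gamma=\min\{r,\delta\}$ says nothing about individual values. To upgrade to an a.e.\ pointwise bound by Lebesgue differentiation you would need the shell width $\epsilon$ to shrink to $0$ while the quotient $\cL^{n+1}\bigl(\{|d-r|<\epsilon/2\}\bigr)/\epsilon$ stays controlled. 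But your annulus thickness is governed not by $\epsilon$ but by the Besicovitch radius $\rho$: you get $\cL^{n+1}(S_\epsilon)\lesssim \theta^{-1}\rho^{-(n+\kappa)}\,r^n(\epsilon+2\rho)\,\cH^{n+\kappa}(M)$, and $\rho$ cannot go to $0$ without the prefactor $\rho^{-(n+\kappa)}$ blowing up. Balancing $\rho\sim\epsilon$ gives $\cL^{n+1}(S_\epsilon)/\epsilon\sim\epsilon^{-(n+\kappa)}$, which diverges. So the coarea/shell scheme inherently produces only interval-averaged control at scale $\min\{r,\delta\}$; it cannot recover the pointwise inequality at the stated constant. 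The paper's [Caraballo Prop.~13] is exactly the missing input that supplies pointwise control: it is a geometric fact about distance level sets inside balls, not a volume/averaging statement, and that is what your argument replaces and thereby loses. Your integral bound would still suffice for Theorem~\ref{finiteenergy2} after a dyadic decomposition, but it does not prove Lemma~\ref{nkappa} as stated.
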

    \begin{remark}
       Let $m=n, s=n+\kappa$. Then, an $(m/s)$-H\"older immersion of $M^n\to \bR^{n+1}$ should satisfy this condition, for any $n$-manifold $M$. These are examples of the fractional $(m/s)$-rectifiable sets of Mart\'in and Mattila,\cite{MartinMattila}. See also the work of Badger---Vellis \cite{BadgerVellis}. 
    \end{remark}
    
    \begin{proof}
        We follow the proof of  \cite[Theorem 6]{Caraballo}. 
        We consider the collection $\cF^*$ of balls of radius $r>0$ centered at each $p\in \Omega$. By Besicovitch covering theorem, and the compactness of $\Omega$, we can find a finite subcover (with centres $p_i$) such that 
            \begin{align*}
                \sum^N_i \cH^{n+\kappa}\left( \Omega\cap B^{n+1}(p_i, r)\right)\leq \beta(n+1)\cH^{n+k}(\Omega)
            \end{align*}
        where $\beta(n+1)$ is the Besicovitch constant.
        We now observe, by the triangle inequality
        \begin{align*}
        \rho^{-1}(\{r\})\subset \bigcup^N_{i} B(p_i, 2r)
        \end{align*}
        and hence, by \cite[Proposition 13]{Caraballo}, 
            \begin{align*}
                \cH^{n}(\rho^{-1}(\{r\})) \leq \sum^N_i \cH^{n}\left( \rho^{-1}(\{s\})\cap B^{n+1}(p_i, 2r)\right) \leq \sum^N_i \mu_{n+1}r^{n}
            \end{align*}
        Now, if $r<\delta$, we deduce from the lower density bound:
            \begin{align*}
                r^{n+\kappa}\leq\frac{1}{\theta}\cH^{n+\kappa}\left( \Omega\cap B^{n+1}(p_i, r)\right).
            \end{align*}
        from which we conclude 
            \begin{align*}
                \cH^{n}(\rho^{-1}(\{r\})) \leq \frac{\mu_{n+1}\beta(n+1)}{\theta}\cH^{n+\kappa}(\Omega)  r^{-\kappa}.
            \end{align*}
        If $r>\delta$, we do the following algebraic manipulation:
            \begin{align*}
                r^{n+\kappa}&=\left(\frac{r}{\delta}\right)^{n+\kappa}\delta^{n+\kappa}\\
                &\leq\left(\frac{r}{\delta}\right)^{n+\kappa}\frac{\cH^{n+\kappa}\left( \Omega\cap B^{n+1}(p_i, \delta)\right)}{\theta}\\
                &\leq\left(\frac{r}{\delta}\right)^{n+\kappa}\frac{\cH^{n+\kappa}\left( \Omega\cap B^{n+1}(p_i, r)\right)}{\theta}
            \end{align*}
            
    \end{proof}

    We can then repeat the integral estimates using this estimate on the level-sets, yielding

    \begin{theorem}\label{finiteenergy2}
        Let $M$ be a generalised hypersurface satisfying the assumptions of Lemma \ref{nkappa}, then
        \begin{align*}
            \int_{\bR^{n+1}}\varepsilon \frac{\norm{Du}^2}{2}+\frac{(u^2-1)^2}{2\varepsilon} d x \leq C \varepsilon t^{-\frac{1+\kappa}{2}}
        \end{align*}
    \end{theorem}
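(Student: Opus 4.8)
The plan is to run the proof of Theorem~\ref{finiteenergy} essentially line by line, the only structural change being that the level-set estimate of Theorem~\ref{n-1est} is replaced by its fractional counterpart, Lemma~\ref{nkappa}. As in that proof I first reduce to the case $\delta = 1$, $\varepsilon < \frac{1}{6\sqrt{n+1}}$, and $t \in (0,\varepsilon^2)$ — the range in which the barrier Lemmas~\ref{ubarrier} and~\ref{gradientbarrier} are available — the remaining cases being handled by splitting the integrals at $s = \delta$ in place of $s = 1$. Since $d(\cdot,M)$ is $1$-Lipschitz with $|\nabla d| = 1$ almost everywhere, and Lemma~\ref{nkappa} gives $\cH^n(\Gamma_s) \le C \sup\{|s|^{-\kappa}, |s|^n\}$ with $C = C(n,\cH^{n+\kappa}(M),\theta,\delta)$, every integral below is evaluated by the co-area formula.

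For the kinetic term I split at $|d(x,M)| = 6\sqrt{n+1}\sqrt{t}$. On the inner region the a priori bound $|Du|^2 \le \frac{C}{t}$ of Corollary~\ref{derivative} (valid since $t < \varepsilon^2$) together with $\int_0^{6\sqrt{n+1}\sqrt{t}} s^{-\kappa}\,ds = C\, t^{(1-\kappa)/2}$ — finite precisely because $\kappa < 1$ — produces a contribution of order $\varepsilon \cdot t^{-1} \cdot t^{(1-\kappa)/2} = \varepsilon\, t^{-(1+\kappa)/2}$; this is the dominant term. On the outer region I use the Gaussian gradient barrier $|Du|^2 \le \frac{C}{t}\exp\!\left(-\frac{d^2}{3t}\right)$ of Lemma~\ref{gradientbarrier} and substitute $s = \sqrt{3t}\,v$: the portion of the $v$-integral with $|s| \le \delta$ contributes $C\varepsilon\, t^{-(1+\kappa)/2}\int_0^\infty e^{-v^2} v^{-\kappa}\,dv$, again finite since $\kappa < 1$, while the portion with $|s| \ge \delta$ is controlled by $C\varepsilon\, t^{(n-1)/2}$ times a Gaussian tail and hence by $C\varepsilon \le C\varepsilon\, t^{-(1+\kappa)/2}$ on $t \in (0,1)$.

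For the potential term I split at $|d(x,M)| = 5\sqrt{n+1}\sqrt{t}$. On the inner region $(u^2-1)^2 \le 1$ gives a contribution $\le \frac{C}{\varepsilon}\, t^{(1-\kappa)/2}$, and $t \le \varepsilon^2$ forces $t^{(1-\kappa)/2}/\varepsilon \le \varepsilon\, t^{-(1+\kappa)/2}$. On the outer region the $u$-barrier Lemma~\ref{ubarrier} gives $(u^2-1)^2 \le \operatorname{sech}^4\!\left(\frac{d}{\sqrt{t}} - 5\sqrt{n+1}\right)$ on $\{d \ge 5\sqrt{n+1}\sqrt{t}\}$, and symmetrically on $\{d \le -5\sqrt{n+1}\sqrt{t}\}$; substituting $s = \sqrt{t}\,w$ and using $\cH^n(\Gamma_s) \le C|s|^{-\kappa}$ for $|s| \le \delta$ yields $\frac{C}{\varepsilon}\, t^{(1-\kappa)/2}$ times the finite integral $\int_{5\sqrt{n+1}}^\infty \operatorname{sech}^4(w - 5\sqrt{n+1})\, w^{-\kappa}\,dw$, while the contribution from $|s| \ge \delta$ is exponentially small in $1/\sqrt{t}$. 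Adding the four pieces gives the asserted bound $C\varepsilon\, t^{-(1+\kappa)/2}$ with $C$ depending only on $n$, $\cH^{n+\kappa}(M)$, $\theta$ and $\delta$.

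I expect no genuine analytic obstacle here: the barriers and the co-area bound are already established, so the argument is bookkeeping. The one point deserving care — and the reason the exponent is exactly $\frac{1+\kappa}{2}$, as well as the reason the method fails at $\kappa = 1$ — is the convergence of $\int_0^1 s^{-\kappa}\,ds$ and of $\int_0^\infty e^{-v^2} v^{-\kappa}\,dv$; at $\kappa = 1$ both become logarithmic, matching the restriction $\kappa \in (0,1)$ fixed at the start of the subsection. Secondarily, one must check that each subleading term — those carrying extra powers of $t$, and the $|s| \ge \delta$ tails — is indeed dominated by $\varepsilon\, t^{-(1+\kappa)/2}$ in the regime $\varepsilon \in (0,1)$, $t \in (0,\varepsilon^2)$, which follows from $t \le \varepsilon^2 < 1$ exactly as in the remark after Theorem~\ref{finiteenergy}.
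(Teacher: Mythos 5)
Your proposal is correct and follows exactly the route the paper intends: the paper's ``proof'' is the single line ``we can then repeat the integral estimates using this estimate on the level-sets,'' and you have carried that out faithfully — same split at $|d|=6\sqrt{n+1}\sqrt t$ (resp.\ $5\sqrt{n+1}\sqrt t$), same barriers, with Theorem~\ref{n-1est} replaced by Lemma~\ref{nkappa} in the co-area step, and with the correct observation that $\kappa<1$ is exactly what makes $\int_0 s^{-\kappa}\,ds$ and the weighted Gaussian integral finite, yielding the exponent $\tfrac{1+\kappa}{2}$. The only point worth flagging, which you handle correctly but which is easy to miss, is that the potential term of order $t^{(1-\kappa)/2}/\varepsilon$ is absorbed into $\varepsilon\,t^{-(1+\kappa)/2}$ precisely by the standing restriction $t\le\varepsilon^2$, so the statement should be understood (as Theorem~\ref{finiteenergy} was) to hold on $t\in(0,\varepsilon^2)$.
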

    Then, applying the results of Ilmanen, Soner, and Tonegawa
    \begin{corollary}
        After multiplying by a constant, the limit as $\varepsilon\to0$ of the energy measures is an integral Brakke flow starting from the generalised hypersurface $M$.
    \end{corollary}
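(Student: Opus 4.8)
The plan is to assemble the three previously established ingredients. By Theorem \ref{finiteenergy2}, for any generalised hypersurface $M$ satisfying the density assumptions of Lemma \ref{nkappa}, the $\varepsilon$-Allen--Cahn energy of the solution $u^\varepsilon$ started from $u_0 = 2\chi_\Omega - 1$ (or any bounded function positive on $\Omega$, negative on $\Omega^c$) satisfies
\begin{align*}
    \int_{\bR^{n+1}}\varepsilon\frac{\norm{Du^\varepsilon}^2}{2} + \frac{(u^\varepsilon)^2-1)^2}{2\varepsilon}\,dx \leq C\varepsilon t^{-\frac{1+\kappa}{2}}
\end{align*}
for $t\in(0,\varepsilon^2)$. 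First I would note, exactly as in the Remark following Theorem \ref{finiteenergy}, that at time $t=\varepsilon^2$ this bound reads $C\varepsilon^{-\kappa}$; since the $\varepsilon$-Allen--Cahn flow is the (negative) gradient flow of the energy functional, the energy is non-increasing in $t$, so $\int d\mu^\varepsilon(\cdot,t) \leq C\varepsilon^{-\kappa}$ for \emph{all} $t\geq\varepsilon^2$. After multiplying the energy measures by the normalising constant $\sigma^{-1} = \big(\int_{-1}^1\sqrt{2F(s)}\,ds\big)^{-1}$ (the usual surface-tension constant appearing in Ilmanen's framework), the rescaled measures $\tilde\mu^\varepsilon_t := \sigma^{-1}\mu^\varepsilon_t$ have mass bounded on compact time intervals $[\delta,1/\delta]$ uniformly in $\varepsilon\in(0,1)$.

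Next I would verify Assumption \ref{Assumption1}. Since $M=\partial\Omega$ is compact, $\Omega$ is bounded, and the barrier Lemma \ref{ubarrier} (applied in the form valid for generalised hypersurfaces, which only used $\cH^n(M)<\infty$ and the density lower bound --- here we use the Lemma \ref{nkappa} level-set estimate in place of Theorem \ref{n-1est}) gives $u^\varepsilon(x,t)$ exponentially close to $\pm1$ once $\norm{d(x,M)}\gtrsim\sqrt{t}$, so both the kinetic and potential parts of $\mu^\varepsilon(\cdot,t)$ decay super-exponentially in $\norm{x}$ outside a fixed compact neighbourhood of $\Omega$, with constants uniform for $t\in[\delta,1/\delta]$ and $\varepsilon\in(0,1)$. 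In particular $\int\norm{\phi(x)}\,d\mu^\varepsilon(x,t) \leq K_\delta\sup\{\norm{\phi(x)}e^{\eta\norm{x}}\}$ for suitable $K_\delta,\eta$. With this, Theorem \ref{bflimit} applies directly: the measures $\{\sigma^{-1}\mu^\varepsilon_t\}$ subconverge as $\varepsilon\to0$ to a family $\{\mu_t\}_{t\in(0,\infty)}$ which is an integral Brakke flow, unit-regular by Tonegawa's work.

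It remains to check that this Brakke flow ``starts from $M$'' in the sense made precise in the Remark following Theorem \ref{main}, namely that the Hausdorff distance from $\supp\mu_t$ to $M$ tends to $0$ as $t\to0$. This is where the two-sided barrier Lemma \ref{ubarrier} is again used, now to squeeze the support: for $d(x,M)\geq 5\sqrt{n+1}\sqrt{t}$ we have $u^\varepsilon(x,t)\geq\tanh(\tfrac{d(x,M)}{\sqrt t}-5\sqrt{n+1})$, and symmetrically below, so $(u^\varepsilon)^2-1$ decays, hence the energy measure mass in $\{d(x,M)\geq R\sqrt t\}$ is $O(e^{-cR^2})$ uniformly in $\varepsilon$; passing to the limit, $\supp\mu_t\subset\{\,\norm{d(x,M)}\leq R_t\}$ with $R_t\to0$. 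Conversely, one uses a clearing-out/lower-density argument (or the fact that at an interior point of $\Omega$, resp.\ $\Omega^c$, the solution $u^\varepsilon$ converges to $+1$, resp.\ $-1$, so no energy concentrates there, combined with the fact that $\mu_t$ cannot vanish near points of $M$ by upper-semicontinuity of mass and the energy lower bounds) to see $\supp\mu_t$ meets every neighbourhood of each point of $M$ as $t\to0$. I expect the main obstacle to be precisely this last point --- ensuring that mass does not instantaneously vanish somewhere on $M$, i.e.\ that the limiting flow genuinely emanates from all of $M$ rather than a proper subset; for smooth $M$ this is standard, but for fractal $M$ one must lean on the uniform lower density bound in the definition of generalised hypersurface to propagate a quantitative lower mass bound down to the limit.
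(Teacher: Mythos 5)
Your overall structure matches the paper's (admittedly terse) argument: Theorem~\ref{finiteenergy2} controls the energy, then one invokes Theorem~\ref{bflimit} (Ilmanen--Soner--Tonegawa), and the "starting from" statement is interpreted in the Hausdorff sense as in the paper's remark. Your handling of the spatial decay in Assumption~\ref{Assumption1} (compact $\Omega$, barriers give super-exponential decay of $\mu^\varepsilon(\cdot,t)$ away from $M$) is sound, and you correctly identify the harder direction of the Hausdorff convergence as showing that the support does not retract from any portion of $M$.

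However, there is a genuine gap in the step where you pass from the energy bound to Assumption~\ref{Assumption1}. You write that at $t=\varepsilon^2$ the Theorem~\ref{finiteenergy2} bound reads $C\varepsilon^{-\kappa}$, then invoke the gradient-flow structure to extend this to all $t\ge\varepsilon^2$, and conclude that the masses are "bounded on compact time intervals $[\delta,1/\delta]$ uniformly in $\varepsilon\in(0,1)$." That last clause does not follow for $\kappa>0$: the bound $C\varepsilon^{-\kappa}$ blows up as $\varepsilon\to 0$, so energy monotonicity only yields $\int d\mu^\varepsilon(\cdot,t)\le C\varepsilon^{-\kappa}$ at each fixed $t$, which is precisely \emph{not} uniform in $\varepsilon$. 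The analogy you draw with the remark after Theorem~\ref{finiteenergy} is misleading: there $\kappa=0$ and the value at $t=\varepsilon^2$ is $2C$, an $\varepsilon$-independent constant, so "non-increasing" immediately gives uniformity; for $\kappa>0$ the same words produce a divergent bound. As stated, you have neither uniform local mass bounds (needed for compactness of $\{\sigma^{-1}\mu^\varepsilon_t\}$ as Radon measures on $[\delta,1/\delta]$) nor a verification of Assumption~\ref{Assumption1}. Closing this would require a quantitatively sharper argument (e.g.\ an $\varepsilon$-Huisken monotonicity estimate showing the mass at a fixed $t>0$ is bounded by $Ct^{-\kappa/2}$, independently of $\varepsilon$), not merely "energy is non-increasing." I note that the paper itself dispatches this corollary with the single phrase "applying the results of Ilmanen, Soner, and Tonegawa," so your proof is at the same level of detail as the paper at this point; but since you explicitly asserted uniformity and supplied a reason that does not deliver it, the gap should be flagged.
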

    \begin{remark}
        Here `starting from' means that the Hausdorff distance of the support goes to $0$ as $t\to0^+$.
    \end{remark}
    \begin{theorem}\label{goodstartingfrom}
        If $M$ is a smooth, embedded compact hypersurface, then each solution $u_\varepsilon$ is multiplicity one for some short time, and the $\varepsilon\to 0 $ limit of the energy measures converges to a unit-regular, integral Brakke starting from $M$ (in the classical sense) with multiplicity one, as long such a multiplicity one flow exists.
    \end{theorem}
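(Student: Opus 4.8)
The plan is to feed the Brakke-limit machinery of Theorem \ref{bflimit} into the classical generation-and-propagation-of-interface analysis of Chen \cite{Chen}; the latter is what upgrades the soft barriers of the previous section to multiplicity one. Since a smooth embedded compact hypersurface $M=\partial\Omega$ is a generalised hypersurface with $\kappa=0$, Theorem \ref{finiteenergy} bounds the total energy $\mathcal{E}_\varepsilon(t):=\mu^\varepsilon_t(\bR^{n+1})$ of the solution $u_\varepsilon$ of \ref{eAC} from $u_0=2\chi_\Omega-1$ uniformly for $t\ge\varepsilon^2$, so Theorem \ref{bflimit} applies and, along a subsequence, the measures $\mu^\varepsilon_t$ converge --- up to the normalising constant of the preceding Corollary --- to an integral Brakke flow $\cM^\infty=\{\mu_t\}$. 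Writing $\{M_t\}_{t\in[0,T_0)}$ for the classical mean curvature flow from $M$, so that $F_t(M)=M_t$ for $t<T_0$, containment of Brakke flows in the level-set flow \cite{IlmanenER} gives $\supp{\mu_t}\subset M_t$, and integrality (Tonegawa) gives $\mu_t=\Theta_t\,\cH^n\llcorner M_t$ for an $\cH^n$-measurable $\Theta_t\colon M_t\to\bN$.

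The multiplicity-one statement for $u_\varepsilon$ itself is Chen's theorem \cite{Chen}: from $u_0=2\chi_\Omega-1$, after a generation time $\tau_\varepsilon=O(\varepsilon^2\lvert\log\varepsilon\rvert)$ the solution settles onto the standard $\tanh$-profile transverse to $M$, and thereafter, for $t$ in a fixed interval and $\varepsilon$ small, the nodal set $\{u_\varepsilon(\cdot,t)=0\}$ is a single smooth hypersurface converging locally uniformly to $M_t$, with $u_\varepsilon\to\pm1$ locally uniformly off $\bigcup_{t}M_t$. Hence $u_\varepsilon\to 2\chi_{\Omega_t}-1$ in $L^1_{\mathrm{loc}}$ for each $t\in(0,T_0)$, where $\Omega_t$ is the region enclosed by $M_t$. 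I would then obtain $\mu_0=\cH^n\llcorner M$ (multiplicity-one initial condition) by squeezing: the barriers (Lemmas \ref{ubarrier}, \ref{gradientbarrier}) confine $\supp{\mu_t}$ to $\{\lvert d(\cdot,M)\rvert\le C\sqrt t\}\downarrow M$; lower semicontinuity of the Allen--Cahn energy under $L^1$-convergence (Modica--Mortola) gives $\mu_t(\bR^{n+1})\ge\cH^n(M_t)\to\cH^n(M)$; and since \ref{eAC} is the $L^2$-gradient flow of the energy, $\mathcal{E}_\varepsilon(t)\le\mathcal{E}_\varepsilon(\tau_\varepsilon)\to\cH^n(M)$ (the energy of the freshly generated profile), so $\mu_t(\bR^{n+1})\le\cH^n(M)$; thus $\mu_t\rightharpoonup\cH^n\llcorner M$ weakly-$*$ as $t\to0^+$, which is ``starting from $M$ in the classical sense'' with multiplicity one. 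To propagate this forward on $(0,T_0)$, I would use that for smooth $M$ the Gaussian density of $\cM^\infty$ at a point of $M_t$ computed at the initial scale $\sqrt t$ is $\le 1+O(t)$, so by Huisken monotonicity and upper semicontinuity of the Gaussian density together with integrality, $\Theta_t\equiv1$ for $t$ small; being then smooth near $t=0$ by White's local regularity, the flow continues as the smooth multiplicity-one flow $\mu_t=\cH^n\llcorner M_t$ (a closed-and-open argument on $(0,T_0)$ using non-increase of mass and continuity of $\cH^n(M_t)$), hence is in particular unit-regular on $[0,T_0)$; and since every subsequential limit is this flow, the full limit exists.

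For the clause ``as long as such a multiplicity one flow exists'': if a unit-regular, integral, multiplicity-one Brakke flow from $M$ is defined past $T_0$, then before the level-set flow of $M$ fattens it is unique (it agrees with the outer flow of Hershkovits--White \cite{hershwhite}), so the identification of $\cM^\infty$ extends to the full interval on which such a flow exists. The main obstacle --- and the place smoothness of $M$ is essential --- is the concentration/multiplicity-one step: the barriers of the previous section alone only localise $u_\varepsilon$ and its energy to an $O(\sqrt t)$-collar of $M$ (the potential term in the estimate of Theorem \ref{finiteenergy} there degenerates like $\sqrt t/\varepsilon$, so a priori the energy could be smeared over the whole collar with higher effective multiplicity), and one genuinely needs Chen's interface-generation analysis to see that the transition happens at the sharp scale $\varepsilon$ as a single unit-density layer. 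This is why Theorem \ref{main} only asserts convergence of supports for general generalised hypersurfaces.
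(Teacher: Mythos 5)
Your argument is correct in outline but takes a genuinely different route from the paper's proof. The paper's own proof is a short, soft blow-up argument that never leaves the Duhamel (convolution) representation of $u_\varepsilon$: rescaling around any $p\in M$ kills the nonlinear term and, since $M$ is smooth, the rescaled initial data converges to $\pm1$ across a hyperplane, so the blow-up of $u_\varepsilon$ is the one-dimensional sigmoidal heat solution times a hyperplane --- a single transition; compactness gives a uniform scale at which this holds, so for a short $\varepsilon$-independent time the solution has exactly two contiguous phases, hence multiplicity one. Your argument instead feeds Chen's generation-and-propagation analysis into an energy squeeze (Modica--Mortola lower semicontinuity from below, gradient-flow monotonicity of the Allen--Cahn energy from above), then bootstraps with Gaussian density, Huisken monotonicity, and White's local regularity; this is more quantitative and makes the attainment $\mu_t\rightharpoonup\cH^n\lfloor M$ and the unit-regularity more explicit than the paper does. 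Two points deserve more care in your version. First, Chen's theorem as usually stated requires $C^2$ initial data with non-degenerate gradient on the zero set, so invoking it from $u_0=2\chi_\Omega-1$ needs a preliminary parabolic-smoothing step (run the flow to a time $t\ll\tau_\varepsilon$ and restart) before the generation estimate applies. Second, and more importantly, the upper bound $\mathcal{E}_\varepsilon(\tau_\varepsilon)\to\sigma\cH^n(M)$ --- that the freshly generated profile carries no excess energy --- is precisely the content of the multiplicity-one claim and is \emph{not} supplied by the barrier estimates of the preceding section (those only give $\mathcal{E}_\varepsilon(\varepsilon^2)\leq 2C$ with a non-sharp constant $C$); it requires the sharp quantitative output of the generation analysis (or a de Mottoni--Schatzman-style asymptotic expansion), not merely the qualitative statement that the nodal set converges, so this is the step to justify with full care rather than tuck into a parenthetical.
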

    \begin{proof}
        This follows immediately from the convolution definition of the $\varepsilon$-AC flow. Indeed, blowing up any solution to the $\varepsilon$-AC flow at any point of the hypersurface yields a solution to the heat equation. Since the hypersurface is smooth, the initial data blows up to $+1$ on the upper half space, and $-1$ on the lower half space (up to rigid Euclidean motion). Thus, the limiting solution is the 1-d sigmoidal solution to the heat equation product a $n-1$-plane.

        This is true at every point, and since the hypersurface is compact and the convergence is smooth, this can be seen at a uniform scale across the hypersurface. Thus, for short time, there are only two contiguous phases, and there cannot be higher multiplicity. The convergence is thus with multiplicity one, as long as the Brakke flow remains multiplicity one - this is atleast until the first singular time.
    \end{proof}
    \begin{remark}\label{goodstartingfromremark}
        In this case, we can really extend the limiting Brakke flow to time 0 setting $\mu_0(\cdot) = \cH^n\lfloor M_0(\cdot)$, with no issue.
    \end{remark}

\section{The Family of Allen-Cahn solutions}
    In this section, $M^n\subset \bR^{n+1}$ is a closed hypersurface (either smooth or generalised). We consider a tubular neighbourhood of $M$, $T_\eta(M):= \{x\in \bR^{n+1}, |d(x,M)|\leq \eta\}$ and foliate $T_\eta(M)$ by (generalised) hypersurfaces $\{M_s\}_{s\in[-\eta,\eta]}$,  $M_s=\{x: d(x,M)=s\}$. 
    Note, if $M$ is smooth then it is well-known that $\eta>0$ can be chosen sufficiently small that the leaves $M_s=\{x: d(x,M)=s\}$ are smooth for $s\in[-\eta,\eta]$.
    \begin{remark}
        We take the convention that the signed distance function is positive on the interior. This gives 
            \begin{align*}
                \mathrm{sgn}(d(M,\cdot))=2\chi_{M}-1.
            \end{align*}
    \end{remark}
    We choose $\eta$ such that $\lsf(M_{\pm\eta})$ does not develop an interior (non-fattening). 
        \begin{theorem}\label{contmax}
            For each $\varepsilon\in(0,1)$, there exists a continuous 1-parameter family $\{u^\varepsilon_s\}_{s\in[-\eta,\eta]}$, where each $u^\varepsilon_s$ is the unique solution to the initial value problem
                \begin{align*}
                    \pd{}{t}u^\varepsilon_s&=\Delta u^\varepsilon_s -\frac1{\varepsilon} f(u^\varepsilon_s)\\
                    u^\varepsilon_s(\cdot, 0)&= 2\chi_{M_s}(\cdot)-1
                \end{align*}
            Moreover, if $s_1\leq s_2$, then 
                \begin{align*}
                    u^\varepsilon_{s_2}(x,t)\leq  u^\varepsilon_{s_1}(x,t)
                \end{align*}
        \end{theorem}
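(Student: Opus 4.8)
The plan is to pass to the mild (Duhamel) formulation underlying Theorem~\ref{exist} and run two Gronwall arguments: one between two fixed leaves to get the ordering, and one between nearby leaves to get continuity.

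\emph{Set-up.} For $s\in[-\eta,\eta]$ the datum $u_{0,s}:=2\chi_{M_s}-1=2\chi_{\{d(\cdot,M)>s\}}-1$ lies in $L^\infty(\bR^{n+1})$ with $\norm{u_{0,s}}_\infty=1$, so Theorem~\ref{exist} produces a unique $u^\varepsilon_s$, smooth on $\bR^{n+1}\times(0,\infty)$, with $\norm{u^\varepsilon_s(\cdot,t)}_\infty\le1$, satisfying $u^\varepsilon_s(\cdot,t)=\Phi_t*u_{0,s}-\tfrac1\varepsilon\int_0^t\Phi_{t-\sigma}*f(u^\varepsilon_s(\cdot,\sigma))\,d\sigma$. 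Choose $\Lambda>L:=\sup_{v\in[-1,1]}\norm{f'(v)}$; then $g(v):=\Lambda v-f(v)$ is nondecreasing on $[-1,1]$ with $0\le g'\le M:=\Lambda+L$, and the equation is equivalent to
\[
u^\varepsilon_s(\cdot,t)=e^{-\Lambda t/\varepsilon}\,\Phi_t*u_{0,s}+\tfrac1\varepsilon\int_0^t e^{-\Lambda(t-\sigma)/\varepsilon}\,\Phi_{t-\sigma}*g(u^\varepsilon_s(\cdot,\sigma))\,d\sigma .
\]

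\emph{Monotonicity.} Fix $s_1\le s_2$. The region $\{d(\cdot,M)>s\}$ enclosed by $M_s$ decreases in $s$, so $u_{0,s_1}\ge u_{0,s_2}$ a.e. Set $w:=u^\varepsilon_{s_1}-u^\varepsilon_{s_2}$, $w_-:=\max\{-w,0\}$, $N(t):=\sup_x w_-(x,t)$. Subtracting the shifted integral equations and writing $g(u^\varepsilon_{s_1})-g(u^\varepsilon_{s_2})=b(x,\sigma)\,w$ with $0\le b\le M$ (mean value theorem), using $e^{-\Lambda t/\varepsilon}\Phi_t*(u_{0,s_1}-u_{0,s_2})\ge0$, $bw\ge -bw_-\ge-Mw_-$, and that $\Phi$ is a probability kernel, one obtains
\[
N(t)\le \frac{M}{\varepsilon}\int_0^t N(\sigma)\,d\sigma .
\]
Since $N$ is bounded (by $2$) on any $[0,T]$, Gronwall forces $N\equiv0$, i.e.\ $u^\varepsilon_{s_2}\le u^\varepsilon_{s_1}$. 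The decisive structural point is that the shift makes the reaction term nondecreasing, so that the $w_+$ contribution carries the favourable sign; this is the de Mottoni--Schatzman comparison principle.

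\emph{Continuity in $s$.} The geometric input is $\cH^{n+1}(\{d(\cdot,M)=s\})=0$ for every $s$: for $s>0$ the open set $\{d<s\}=\{x:\operatorname{dist}(x,\Omega^c)<s\}$ is a union of open balls of radius $s$, hence satisfies an interior-ball condition of radius $s$ at every boundary point, so $\{d=s\}=\partial\{d<s\}$ has locally finite $\cH^n$-measure (cf.\ the estimates behind Theorem~\ref{n-1est}) and is Lebesgue-null; the case $s<0$ is the same with $\overline\Omega$ in place of $\Omega^c$, and $\{d=0\}=M$ is $\cH^{n+1}$-null because $\cH^{n+\kappa}(M)<\infty$. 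Hence $s\mapsto u_{0,s}$ is continuous into $L^1(\bR^{n+1})$, with $\norm{u_{0,s}-u_{0,s'}}_{L^1}=2\,\cH^{n+1}(\{\min(s,s')<d\le\max(s,s')\})\to0$ as $s'\to s$. Subtracting the unshifted integral equations for $s,s'$, estimating $\norm{f(u^\varepsilon_s(\sigma))-f(u^\varepsilon_{s'}(\sigma))}_\infty\le L\,\norm{u^\varepsilon_s(\sigma)-u^\varepsilon_{s'}(\sigma)}_\infty$, and using $\norm{\Phi_{t-\sigma}*(u_{0,s}-u_{0,s'})}_\infty\le\min\{2,(4\pi(t-\sigma))^{-(n+1)/2}\norm{u_{0,s}-u_{0,s'}}_{L^1}\}$, Gronwall together with dominated convergence in $\sigma$ gives $\norm{u^\varepsilon_s(\cdot,t)-u^\varepsilon_{s'}(\cdot,t)}_\infty\to0$ as $s'\to s$, locally uniformly in $t\in(0,\infty)$; interior parabolic estimates (as in Corollary~\ref{derivative}, bootstrapped) upgrade this to continuity of $s\mapsto u^\varepsilon_s$ into $C^\infty_{\mathrm{loc}}(\bR^{n+1}\times(0,\infty))$.

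\emph{Main obstacle.} Once phrased through the integral equation each step is a routine Gronwall estimate; the two substantive points are (i) that the indicator data are attained only distributionally (in $L^1_{\mathrm{loc}}$), so comparison and continuity must be run on the Duhamel identity and the Gronwall inequalities closed using only boundedness on $[0,T]$, not any $t\to0$ regularity; and (ii) the vanishing of $\cH^{n+1}(\{d(\cdot,M)=s\})$, the sole non-formal geometric fact, which is precisely what yields continuity --- and not merely one-sided continuity --- at every parameter value $s$.
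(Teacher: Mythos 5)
Your proof is correct and, at bottom, runs along the same lines as the paper's: the paper simply cites de Mottoni--Schatzman \cite{MS1} (Theorem 1.2 for existence/uniqueness, and their Equation 2.15 for the comparison principle, together with their continuity of the solution map in the initial data), whereas you re-derive these facts from the mild (Duhamel) formulation by the same $\Lambda$-shift and Gronwall arguments that underlie \cite{MS1} in the first place. So there is no genuinely different route here, only a choice of citing versus re-proving.

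That said, you supply one ingredient the paper's terse proof leaves implicit, and it is worth flagging as the genuinely non-formal step: continuity of the solution family in $s$ requires not only that the solution map $u_0\mapsto u$ is continuous (which \cite{MS1} gives), but also that $s\mapsto u_{0,s}=2\chi_{M_s}-1$ is itself continuous into the relevant space. You establish $L^1$-continuity of $s\mapsto u_{0,s}$ via the fact that $\cH^{n+1}(\{d(\cdot,M)=s\})=0$ for every $s$: for $s=0$ this is because $\cH^{n+\kappa}(M)<\infty$ with $\kappa<1$, and for $s\neq0$ your exterior-ball/Lebesgue-density argument is the right one (every point of $\{d=s\}$ admits a ball of radius $|s|$ on one side, so the Lebesgue density of $\{d\ge s\}$ there is bounded away from $1$, and the density theorem forces the level set to be null). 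This is exactly the kind of observation the paper's ``follows immediately'' glosses over, and making it explicit strengthens the exposition. Two cosmetic remarks: you describe the ball condition as an ``interior-ball condition'' where ``exterior'' for $\{d\ge s\}$ is the cleaner phrasing, and your ``dominated convergence in $\sigma$'' step is valid (the integrand is dominated by $2e^{Lt/\varepsilon^2}$ and tends to $0$ pointwise for $\sigma>0$) but deserves the one-line justification rather than a wave of the hand.
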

        \begin{proof}
            The existence of unique solutions from data in $L^\infty(\bR^{n+1})$ was established in \cite{MS1}, see Theorem \ref{exist}. Moreover, they establish both continuity (in the topology of distributions) of the solutions in the initial data, as well as a maximum principle (\cite[Equation 2.15]{MS1}) from which the second part of the theorem follows immediately.
        \end{proof}
        \begin{proposition}\label{nodalset}
            Let $X=(x,t)\in \lsf(M_0),t>0$  be a point in the level-set flow (LSF) from $M_0$ and let $\{\varepsilon_j\}_{j\in\bN}$ be a sequence with $\varepsilon_j\to0$. Suppose $\{s_j\}_{j\in\bN}$ is a sequence such that
                \begin{align*}
                    u_j(X)=u_{s_j}^{\varepsilon_j}(X)=0.
                \end{align*}
            Let 
                \begin{align*}
                    \cM^j=\left\{ \left[\frac{\varepsilon^j \norm{D u^j}^2(x,t)}{2}+\frac{F(u^j(x,t))}{\varepsilon}\right]dx\right\}_{t\in(0,\infty)}
                \end{align*} 
            be the associated sequence of 1-parameter families of Radon measures.
            Then, any subsequential limiting family of Radon measures $\cM^\infty$ is (up to a constant) an integral Brakke flow, with
                \begin{align*}
                    X\in \supp{\cM^\infty}.
                \end{align*}
        \end{proposition}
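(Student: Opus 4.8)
The plan is to establish the two assertions separately: that $\cM^\infty$, after division by a fixed constant $\sigma=\sigma(F)$, is an integral Brakke flow, and that $X\in\supp{\cM^\infty}$.

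\textbf{Brakke flow.} Each $u_j:=u^{\varepsilon_j}_{s_j}$ starts from $2\chi_{M_{s_j}}-1$ with $M_{s_j}=\{d(\cdot,M)=s_j\}$ and $|s_j|\le\eta$, so the level sets of $d(\cdot,M_{s_j})$ are among those of $d(\cdot,M)$; hence Theorem \ref{finiteenergy} (or Theorem \ref{finiteenergy2} when $\kappa>0$) applies to each $M_{s_j}$ with constants uniform in $j$, and extending the bound to $t\ge\varepsilon_j^2$ by the gradient-flow property of the energy yields, for each $\delta>0$,
\[
\sup_j\ \sup_{t\in[\delta,1/\delta]}\ \mu^{\varepsilon_j}_t(\bR^{n+1})\le K_\delta<\infty .
\]
Since then $\int|\phi|\,d\mu^{\varepsilon_j}_t\le K_\delta\sup|\phi|\le K_\delta\sup\{|\phi(x)|e^{\eta|x|}\}$, Assumption \ref{Assumption1} holds, and $2\chi_{M_{s_j}}-1$ has the sign structure required by Theorem \ref{bflimit}; that theorem then identifies any subsequential limit $\cM^\infty$ of $\{\sigma^{-1}\cM^j\}$ as an integral Brakke flow. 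Being a Brakke flow, $\cM^\infty$ obeys Huisken's monotonicity, and Ilmanen's convergence gives, for a.e.\ $r>0$, $\Theta^{\varepsilon_j}(X,r)\to\Theta_{\cM^\infty}(X,r)$, writing $\Theta^{\varepsilon_j}(X,r):=\int\rho_X(\cdot,t-r^2)\,d\mu^{\varepsilon_j}_{t-r^2}$ for $X=(x,t)$.

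\textbf{Support.} The idea is a microscopic-to-macroscopic density estimate resting on the single fact $u_j(X)=0$. Rescaling, $v_j(y,s):=u_j(x+\varepsilon_j y,\,t+\varepsilon_j^2 s)$ solves the $\varepsilon$-independent equation $\partial_s v_j=\Delta v_j-f(v_j)$, satisfies $\|v_j\|_\infty\le1$ and $v_j(0,0)=0$, and is smooth on $B_2\times[-4,0]$ once $\varepsilon_j^2<t/4$. Interior parabolic regularity (using $|f|\le\max_{[-1,1]}|f|<1$) bounds $v_j$ in $C^{2,\alpha}$ on $B_1\times[-1,0]$ by a dimensional constant, so there is a dimensional $r_0>0$ with $|v_j|\le\tfrac12$, hence $F(v_j)\ge F(\tfrac12)=\tfrac{9}{32}$, on $B_{r_0}\times[-r_0^2,0]$. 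Undoing the rescaling — the Jacobian exactly cancels the $\varepsilon_j$-powers in $d\mu^{\varepsilon_j}$ and in $\rho_X$ — one gets, for all large $j$,
\[
\Theta^{\varepsilon_j}(X,r_0\varepsilon_j)=\int_{\bR^{n+1}}\frac{e^{-|y|^2/4r_0^2}}{(4\pi r_0^2)^{n/2}}\Big(\tfrac12|\nabla v_j|^2+F(v_j)\Big)(y,-r_0^2)\,dy\ \ge\ \frac{9}{32}\int_{B_{r_0}}\frac{e^{-|y|^2/4r_0^2}}{(4\pi r_0^2)^{n/2}}\,dy\ =:\ c_0(n)>0 .
\]
I would then feed this into Ilmanen's $\varepsilon$-monotonicity formula \cite{Ilmanen}: for $r_0\varepsilon_j<r<\sqrt t$ it gives $\Theta^{\varepsilon_j}(X,r)\ge\Theta^{\varepsilon_j}(X,r_0\varepsilon_j)-E_j(r)$, where $E_j(r)$ is the discrepancy error term, bounded by $C\,r\sup\{\xi^{\varepsilon_j}_+(y,s):y\in\bR^{n+1},\,s\in[t-r^2,t]\}$ (the factor $r$ because the monotonicity weight $1/(t-s)$ is only $1/\sqrt{t-s}$-integrable at the vertex). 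Since $t>0$, that supremum is over a compact time interval bounded away from $0$, where $\xi^{\varepsilon_j}_+\to0$ as $\varepsilon_j\to0$ (it vanishes identically in Ilmanen's non-positive-discrepancy setting, is small by Soner \cite{Soner1} for $C^2$ data, and is otherwise controlled by the estimates underlying Theorem \ref{bflimit}), so $E_j(r)\to0$. Combining with the last display and letting $j\to\infty$, $\Theta_{\cM^\infty}(X,r)\ge c_0(n)$ for a.e.\ small $r$, so by Huisken monotonicity $\Theta_{\cM^\infty}(X):=\lim_{r\to0^+}\Theta_{\cM^\infty}(X,r)\ge c_0(n)>0$. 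Finally, were $X\notin\supp{\cM^\infty}$, then $\mu_s\equiv0$ on some ball $B_\rho(x)$ for all $s$ near $t$, forcing $\Theta_{\cM^\infty}(X,r)\le K_\delta(4\pi r^2)^{-n/2}e^{-\rho^2/4r^2}\to0$ as $r\to0^+$, against $\Theta_{\cM^\infty}(X)>0$; hence $X\in\supp{\cM^\infty}$.

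\textbf{Main obstacle.} The delicate step is the transfer: controlling the error in the $\varepsilon$-monotonicity formula down to the microscopic inner radius $r_0\varepsilon_j$. What rescues it is the mild, $1/\sqrt{t-s}$ singularity of the parabolic weight at the vertex, so that a uniform-in-$j$ smallness of the positive part of the discrepancy on the \emph{fixed} compact time interval $[t-r^2,t]$ already forces the cumulative error to vanish; obtaining that smallness for the low-regularity initial data used here — rather than for the $C^2$ data of Soner — is the part needing care, and is where one leans on the barrier and energy estimates above. The remaining ingredients are routine: the Brakke-flow property is Theorem \ref{bflimit}, and the microscopic lower bound is just interior parabolic regularity for a single, $\varepsilon$-independent, semilinear heat equation.
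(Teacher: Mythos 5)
Your proof of the Brakke-flow part (compactness of Radon measures, uniform energy bounds from Theorem~\ref{finiteenergy}/\ref{finiteenergy2} verifying Assumption~\ref{Assumption1}, then Theorem~\ref{bflimit}) is the same as the paper's.

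For the support assertion you take a genuinely different route. The paper argues by contradiction using one clean citation: if $X\notin\supp{\cM^\infty}$, then item~(ii) of Ilmanen's clearing-out lemma \cite[Theorem~6.1]{Ilmanen} forces $u_j\to\pm1$ uniformly near $X$, contradicting $u_j(X)=0$. You instead run the contrapositive by hand: rescale by $\varepsilon_j$, use parabolic regularity for the $\varepsilon$-independent equation to extract a dimensional ball on which $|v_j|\le\tfrac12$ so $F(v_j)\ge\tfrac{9}{32}$, deduce the microscopic lower bound $\Theta^{\varepsilon_j}(X,r_0\varepsilon_j)\ge c_0(n)$, and then push this up to fixed scales $r$ with Ilmanen's $\varepsilon$-monotonicity formula, finishing by Huisken monotonicity in the limit. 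This is, in effect, a direct re-derivation of the special case of the clearing-out lemma that the paper cites: Ilmanen's Theorem~6.1 is itself proved via the $\varepsilon$-monotonicity together with exactly this kind of rescaled local non-degeneracy, so your argument unpacks the black box. What you gain is transparency (the route from $u_j(X)=0$ to a density lower bound is visible); what it costs is length, and the need to re-handle the positive part of the discrepancy yourself, which you correctly flag as the delicate point — but this dependence on the discrepancy hypotheses is identical to what is already baked into citing Ilmanen's clearing-out lemma, so you are not incurring a new gap relative to the paper. Two small remarks: the conclusion does not need the full $\varepsilon$-monotonicity with error transfer — once you have established (as the paper does) that $\cM^\infty$ is a Brakke flow, one can also argue purely at the limit: were $\Theta_{\cM^\infty}(X)=0$ then Brakke/White local regularity would give a smooth vacuum neighbourhood of $X$, and $\varepsilon$-clearing-out would then force $u_j\to\pm1$ there; this is essentially the paper's contradiction and avoids tracking $E_j(r)$ down to the microscopic inner radius. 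Also, your final step implicitly uses that $\supp{\cM^\infty}$ is closed in spacetime (so $X\notin\supp{\cM^\infty}$ gives a full spacetime ball of vacuum), which is fine but worth stating.
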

        \begin{proof} By the compactness theorem for Radon measures, we can always extract subsequential limit $\cM^\infty$, $\cM^{j_i}\rightharpoonup \cM^\infty$. Appealing to Theorem \ref{finiteenergy},( or \ref{finiteenergy2}), we see can apply Theorem \ref{bflimit}, \cite{Ilmanen,Soner1,Tonegawa}, and conclude that $\cM^\infty$ must be an integral Brakke flow.

            Suppose now that the claim regarding $X$ does not hold. Then by item (ii) of Ilmanen's clearing out lemma \cite[Theorem 6.1]{Ilmanen}, we see the $u_j$ must converge uniformly to $+1$ or $-1$ in a neighbourhood of $X$, however, this contradicts our assumption that $u_j(X)=0$. Thus, $X$ must be in the support of $\cM^\infty$.
            
        \end{proof}
        \begin{corollary}
            Let $\cM^\infty$ be the limiting Brakke flow from Proposition \ref{nodalset}. Then,
                \begin{align*}
                    \supp{\cM^\infty}\subset\lsf(M_0).
                \end{align*}
            In particular, we have 
                \begin{align*}
                    \supp{\lim_{t\downarrow 0} d\mu^\infty(t)}\subset M_0
                \end{align*}
        \end{corollary}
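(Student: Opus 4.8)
The strategy is to recognise what $\cM^\infty$ starts from, invoke Ilmanen's theorem that an integral Brakke flow is supported inside the level set flow of its initial data, and then use the constraint $X\in\lsf(M_0)$ to identify that initial data with $M_0$. Since the sequence $\{s_j\}$ produced in Proposition~\ref{nodalset} lies in the compact interval $[-\eta,\eta]$, I would first pass to a further subsequence (not relabelled), still with $\cM^j\rightharpoonup\cM^\infty$, along which $s_j\to s'$ for some $s'\in[-\eta,\eta]$. Everything then reduces to showing $s'=0$: the two asserted inclusions are precisely the statements $\supp{\mu^\infty_t}\subseteq F_t(M_{s'})$ $(t>0)$ and $\supp{\lim_{t\downarrow0}\mu^\infty_t}\subseteq M_{s'}$ with $s'$ set to $0$.

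\emph{Step 1: the flow starts from $M_{s'}$.} Fix $x_0\notin M_{s'}$, put $\rho_0=\operatorname{dist}(x_0,M_{s'})>0$, and note that for $j$ large the leaf $M_{s_j}$ is at distance $\ge\rho_0/2$ from $B(x_0,\rho_0/4)$ and does not separate it. On the region $\{\hd\ge\rho_0/4\}$ the $\varepsilon$-scale profile $\tanh\!\big((\hd-\rho_0/4)/\varepsilon_j\big)$ (a constant shift of the $\varepsilon$-barrier of the preceding section) is a subsolution and vanishes on $\{\hd=\rho_0/4\}$, where the coarse barrier of Lemma~\ref{ubarrier} already forces $u^{\varepsilon_j}_{s_j}>0$ for $t$ below a threshold $\tau_{\rho_0}$ depending only on $\rho_0$ and $n$; a comparison exactly as in Lemma~\ref{ubarrier} then gives $1-\norm{u^{\varepsilon_j}_{s_j}}\le 2e^{-c\rho_0/\varepsilon_j}$ on $B(x_0,\rho_0/4)$ once $\varepsilon_j$ is small and $t<\tau_{\rho_0}$, so $\int_{B(x_0,\rho_0/4)}F(u^{\varepsilon_j}_{s_j})/\varepsilon_j\to0$ as $\varepsilon_j\to0$. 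For the kinetic part, Lemma~\ref{gradientbarrier} bounds $\norm{Du^{\varepsilon_j}_{s_j}}^2$ on $B(x_0,\rho_0/4)$ by $\tfrac{C}{t}e^{-\rho_0^2/(Ct)}$, so $\tfrac{\varepsilon_j}{2}\norm{Du^{\varepsilon_j}_{s_j}}^2\to0$ there too; integrating (using the level-set volume bound of Theorem~\ref{n-1est}, or Lemma~\ref{nkappa}) we obtain $\mu^{\varepsilon_j}_{s_j}\big(B(x_0,\rho_0/4),t\big)\to0$ as $\varepsilon_j\to0$ for each fixed $t\in(0,\tau_{\rho_0})$. Hence $x_0\notin\supp{\mu^\infty_t}$ for all such $t$, and therefore $x_0\notin\supp{\mu^\infty_0}$, where $\mu^\infty_0:=\lim_{t\downarrow0}\mu^\infty_t$. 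Since $M_{s_j}\to M_{s'}$ in Hausdorff distance this gives $\supp{\mu^\infty_0}\subseteq M_{s'}$. (This is just the Hausdorff ``starting from'' statement of the preceding section, run with the moving leaves $M_{s_j}$.)

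\emph{Step 2: containment in the level set flow, and $s'=0$.} By Proposition~\ref{nodalset}, $\cM^\infty$ is an integral Brakke flow; extending it to $t=0$ by $\mu^\infty_0$ yields a Brakke flow on $[0,\infty)$. Ilmanen's result \cite{IlmanenER} that the support of a Brakke flow lies in the level set flow of its initial slice, together with Step~1 and the monotonicity $F_t(A)\subseteq F_t(B)$ for $A\subseteq B$, gives $\supp{\mu^\infty_t}\subseteq F_t(M_{s'})$ for all $t>0$. Now let $w$ be the level-set solution with $w(\cdot,0)=d(\cdot,M_0)$, so that, as usual, $F_t(M_0)=\{w(\cdot,t)=0\}$; since $M_{s'}=\{w(\cdot,0)=s'\}$ and $w-s'$ is again a level-set solution, $F_t(M_{s'})=\{(w-s')(\cdot,t)=0\}=\{w(\cdot,t)=s'\}$. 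Evaluating at the spacetime point $X=(x_0,t_0)$, $t_0>0$, supplied by Proposition~\ref{nodalset}: $X\in\lsf(M_0)$ gives $w(x_0,t_0)=0$, while $X\in\supp{\cM^\infty}\subseteq\{w(\cdot,t)=s'\}$ gives $w(x_0,t_0)=s'$; hence $s'=0$. Substituting $s'=0$ into Steps~1 and~2 yields $\supp{\cM^\infty}\subseteq\lsf(M_0)$ and $\supp{\lim_{t\downarrow0}\mu^\infty_t}\subseteq M_0$.

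The step I expect to be the main obstacle is Step~1. The coarse barriers of the preceding section were only used there to bound the \emph{total} energy uniformly in $\varepsilon$, whereas here one needs the \emph{far-field} energy to vanish as $\varepsilon_j\to0$; this forces the use of the sharp $\varepsilon$-scale profile $\tanh(\hd/\varepsilon)$ for the potential term — the $\sqrt t$-scale barrier only gives $F(u^{\varepsilon_j}_{s_j})/\varepsilon_j\lesssim\operatorname{sech}^4(\cdot)/\varepsilon_j$, which does not decay — together with a little bookkeeping to keep the comparison region $\{\hd\ge\rho_0/4\}$ a fixed distance from the moving leaf so that Lemma~\ref{ubarrier} still supplies the ordering on its parabolic boundary. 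A secondary technical point is that $\cM^\infty$ lives a priori only on $(0,\infty)$, so one must check its $t\downarrow0$ limit genuinely furnishes an initial slice to which Ilmanen's containment applies; this is routine given the mass bound of Theorem~\ref{finiteenergy} and the lower semicontinuity of Brakke's inequality.
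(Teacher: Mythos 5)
Your proposal is correct and follows a genuinely different route from the paper's, and in fact it closes a small gap in the paper's own argument. The paper's proof proceeds by sandwiching: for a fixed small $s>0$ with $\lsf(M_{\pm s})$ non-fattening, it constructs ``standard'' Evans--Soner--Souganidis initial data $\tilde u^{\varepsilon}_{\pm s}$ for the leaves $M_{\pm s}$ ordered so that $\tilde u^{\varepsilon}_{+s}(\cdot,0)\le u_r(\cdot,0)\le \tilde u^{\varepsilon}_{-s}(\cdot,0)$ for $r$ small, then invokes the maximum principle of Theorem~\ref{contmax}, the convergence of the ESS phase fields to the level-set flow of $M_{\pm s}$, and the clearing-out lemma to trap $\supp{\cM^\infty}$ inside $\lsf(\mathrm{int}(M_{-s}))\cap\lsf(\mathrm{ext}(M_{+s}))$, finally taking $s\to 0$. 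You instead pass to a subsequence $s_j\to s'$, show directly via the $\varepsilon$-scale barrier that the far-field energy vanishes so that $\supp{\mu^\infty_0}\subset M_{s'}$, feed this into Ilmanen's containment theorem and the monotonicity $F_t(A)\subset F_t(B)$ for $A\subset B$, and then pin down $s'=0$ by comparing the two statements $w(x_0,t_0)=0$ (from $X\in\lsf(M_0)$) and $w(x_0,t_0)=s'$ (from $X\in\supp{\cM^\infty}\subset\{w=s'\}$), where $w$ is the level-set function of the signed distance to $M_0$.

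Two remarks on the comparison. First, the paper's proof asserts ``Since $s_j,\varepsilon_j\to 0$ as $j\to\infty$'' but nowhere establishes $s_j\to 0$; Proposition~\ref{nodalset} only requires $u^{\varepsilon_j}_{s_j}(X)=0$. Your Step~2 argument supplies exactly what is missing: it shows that the single constraint $X\in\lsf(M_0)\cap\supp{\cM^\infty}$ \emph{forces} any subsequential limit $s'$ of $(s_j)$ to be $0$. This is a nice observation, and it means your route proves the corollary as literally stated, whereas the paper's route needs $s_j\to 0$ as a tacit hypothesis. Second, your approach carries a bit more technical overhead: you must verify that the shifted barrier $\tanh((\hd-\rho_0/4)/\varepsilon_j)$ is a subsolution on $\{\hd>\rho_0/4\}$ with the correct parabolic-boundary ordering (this works — the same computation as the paper's unshifted $\varepsilon$-barrier gives the sign condition, and the heat-flow comparison in Lemma~\ref{ubarrier} gives $u>0$ on $\{\hd=\rho_0/4\}$ for $t<\tau_{\rho_0}$), and you must justify extending $\cM^\infty$ to $t=0$ as a Brakke flow so that Ilmanen's containment applies to the extended flow with initial slice $\mu^\infty_0$. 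The paper explicitly sidesteps both of these by working only for $t>0$ and absorbing the $t\to 0^+$ behaviour into the ESS comparison; this is why the paper's proof opens with the remark that it \emph{cannot} argue the initial energy measure converges. Your route goes around the same obstacle differently, by showing far-field vanishing rather than convergence on $M_0$, which is weaker but sufficient. Both arguments are correct; yours is more self-contained in that it avoids constructing the auxiliary ESS approximants and isolates the reason $s'=0$ must hold, while the paper's is shorter granted the ESS machinery and the (unproved) $s_j\to 0$.

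Also note the monotonicity $F_t(A)\subset F_t(B)$ for closed $A\subset B$ that you invoke is true but deserves a line: take level-set functions with $w_A(\cdot,0)=-\operatorname{dist}(\cdot,A)\le -\operatorname{dist}(\cdot,B)=w_B(\cdot,0)\le 0$; by the comparison principle $w_A\le w_B\le 0$ for all $t$, so $\{w_A=0\}\subset\{w_B=0\}$.
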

        \begin{proof}
            The energy measure is ill-defined at time $0$ along our sequence, so we cannot argue that the initial energy measure converges $\cH^{n}\lfloor M_0$ (up to a constant) which would immediately yield that the (support of the) limiting Brakke flow is a weak set flow from $M_0$. 

            Instead, we use a barrier argument. Fix $s\in (0,+\eta$), and we may presume $s$ is chosen such that $\mathrm{LSF}(M_{\pm s})$ does not fatten. Then, for $\varepsilon<s/2$ we construct `standard' initial data $\tilde{u}_{\pm s}^\varepsilon$ for the hypersurfaces $M_{\pm s}$ as per Ilmanen \cite{Ilmanen} or Modica \cite{Modica}, with the additional constraint that for $r\in (0,s/4)$
                \begin{align*}
                    \tilde{u}_{+s}^\varepsilon(\cdot, 0) \leq u_r(\cdot, 0) \leq \tilde{u}_{- s}^\varepsilon(\cdot, 0).
                \end{align*}
            where $u_r= 2\chi_{M_r}-1$. By Evans--Soner--Souganidis \cite{ESS}, the support of the energy measures associated to $\tilde{u}_{\pm s}^\varepsilon(\cdot, t)$ converges, as $\varepsilon\to 0$ to the level-set flow of $M_{\pm s}$, since $s$ was chosen to be a non-fattening level.

            Since $s_j,\varepsilon_j\to 0 $ as $j\to \infty$, there exists $J\in \bN$ such that for $j\geq J$, $\varepsilon_j<s/2$, $s_j<s/4$. Thus, we have
                \begin{align*}
                    \tilde{u}^{\varepsilon_j}_{+s}(\cdot, 0)\leq u_{s_j}(\cdot, 0)\leq \tilde{u}^{\varepsilon_j}_{-s}(\cdot,0),
                \end{align*}
            and by the maximum principle, for $t>0$ we also have
                \begin{align*}
                    \tilde{u}^{\varepsilon_j}_{+s}(\cdot, t)\leq u^{\varepsilon_j}_{s_j}(\cdot, t)\leq \tilde{u}^{\varepsilon_j}_{-s}(\cdot,t).
                \end{align*}
            Taking $j\to\infty$ (and an application of Ilmanen's Clearing-Out Lemma), we conclude that the support of $\cM^\infty$ satisfies 
                \begin{align*}
                    \supp{\cM^\infty} \subset \lsf(\mathrm{int}(M_{-s}))\\
                    \supp{\cM^\infty} \subset \lsf (\mathrm{ext}(M_{+s})).
                \end{align*}
            Since $s$ was arbitrary, we conclude $\supp{\cM^\infty}\subset\lsf(M_0)$. The second claim follows immediately.
            
        \end{proof}
    \begin{remark}
        In general, we can now extend the Brakke flow $\cM^\infty$ to time 0 by setting $\mu^\infty_0=\cH^n \lfloor M_0$ at time 0. Note we do not demonstrate equality of the support at $t=0$, nor do we say anything about the mass, meaning the flow may exhibit instantaneous mass drop or gain, see Section \ref{thefigure8} for an example.
    \end{remark}
    \begin{theorem}\label{supported}
        Let $M_0$ be a compact (generalised) hypersurface. Suppose that $\lsf(M_0)$ fattens in finite time. Then, for each $X_0\in\lsf(M_0)$, there exists a unit-regular, integral Brakke flow $\{\mu_t\}_{t\in[0,\infty)}$ starting from $M_0$, supported at $X_0$.
    \end{theorem}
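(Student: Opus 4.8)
The plan is to promote the family-level statement from the previous results into a single flow through the desired spacetime point $X_0 = (\mathbf{x}_0, t_0)$ by a diagonal extraction. First I would dispose of the trivial case: if $t_0 = 0$ then $\mathbf{x}_0 \in M_0$ and the static consideration together with the $\varepsilon$-AC flow from $2\chi_{M_0}-1$ already furnishes a flow ``supported at'' $X_0$; so assume $t_0 > 0$. Since $\lsf(M_0)$ fattens, $X_0 \in \lsf(M_0)$ is permissible by hypothesis. For fixed $\varepsilon \in (0,1)$, consider the continuous family $\{u^\varepsilon_s\}_{s \in [-\eta,\eta]}$ from Theorem \ref{contmax}. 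The key observation is a connectedness/intermediate-value argument in the leaf parameter $s$: for $s$ close to $+\eta$, the leaf $M_s$ lies strictly inside $M_0$ and, because $\lsf(M_{+\eta})$ is non-fattening and disjoint-from-$X_0$-side, one shows $u^\varepsilon_s(X_0) > 0$ for $\varepsilon$ small; symmetrically $u^\varepsilon_s(X_0) < 0$ for $s$ near $-\eta$. The map $s \mapsto u^\varepsilon_s(X_0)$ is continuous (continuity of AC solutions in the initial data, Theorem \ref{contmax}, combined with interior smoothness), hence by the intermediate value theorem there is $s = s(\varepsilon) \in (-\eta,\eta)$ with $u^\varepsilon_{s(\varepsilon)}(X_0) = 0$.

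Next I would take a sequence $\varepsilon_j \to 0$ and the corresponding $s_j = s(\varepsilon_j)$. The energy bound of Theorem \ref{finiteenergy} (or \ref{finiteenergy2}) applies uniformly — the leaves $M_s$ all satisfy the same generalised-hypersurface bounds with constants uniform in $s$ on the compact interval $[-\eta,\eta]$ — so Assumption \ref{Assumption1} holds and Theorem \ref{bflimit} gives, after passing to a further subsequence, a limiting integral Brakke flow $\cM^\infty = \{\mu_t\}$. By Proposition \ref{nodalset}, since $u_j(X_0) = 0$ for all $j$, Ilmanen's clearing-out lemma forces $X_0 \in \supp{\cM^\infty}$. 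The corollary to Proposition \ref{nodalset} gives $\supp{\cM^\infty} \subset \lsf(M_0)$, and the barrier argument there (using that $s_j \to 0$) gives $\supp{\lim_{t \downarrow 0} \mu_t} \subset M_0$; extending by $\mu_0 = \cH^n\lfloor M_0$ as in the remark, we obtain a flow ``starting from $M_0$'' in the sense of the statement (Hausdorff-distance convergence), and the stronger classical sense when $M_0$ is smooth via Theorem \ref{goodstartingfrom}.

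It remains to address unit-regularity. The flows produced by Theorem \ref{bflimit} via the Ilmanen--Soner--Tonegawa machinery are unit-regular: this is part of Tonegawa's integrality/regularity package for Allen--Cahn limits, which yields unit density regularity at Gaussian-density-one points. I would cite this (the same way unit-regularity is asserted for the outermost flows in Definition \ref{outermost}) rather than reprove it; alternatively one invokes White's local regularity theorem together with the fact that Allen--Cahn limit flows have no higher-multiplicity static planes appearing as tangent flows at density-one points. So the assembled flow $\cM^\infty$ is the required unit-regular, integral Brakke flow starting from $M_0$ and supported at $X_0$.

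The main obstacle is the intermediate-value step, specifically establishing the two \emph{sign} conditions $u^\varepsilon_{s}(X_0) > 0$ for $s$ near $+\eta$ and $< 0$ for $s$ near $-\eta$, uniformly enough in $\varepsilon$ that a single $s(\varepsilon)$ can be selected for each small $\varepsilon$. For this I would lean on the barrier comparison already used in the corollary to Proposition \ref{nodalset}: the ``standard'' initial data $\tilde u^\varepsilon_{\pm\eta}$ for the non-fattening leaves $M_{\pm\eta}$ sandwich $u^\varepsilon_s$ from the appropriate side (by the maximum principle of Theorem \ref{contmax}), and by Evans--Soner--Souganidis \cite{ESS} their energy measures converge to the (non-fattening, hence set-theoretic) level-set flows of $M_{\pm\eta}$. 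Since $X_0 \in \lsf(M_0)$ lies strictly between $\lsf(M_{+\eta})$ and $\lsf(M_{-\eta})$ in the set-theoretic sense (both being non-fattening and on opposite sides), $X_0$ is eventually in the $+1$ region of the outer barrier and the $-1$ region of the inner barrier, which pins the signs for all sufficiently small $\varepsilon$ with the same threshold. Care is needed that ``$X_0 \in \lsf(M_0)$'' together with the avoidance principle genuinely places $X_0$ off both $\lsf(M_{\pm\eta})$; this is where the choice of $\eta$ (so that the $M_{\pm\eta}$ level-set flows do not fatten and stay on their respective sides) is essential, and I would spell this separation out carefully.
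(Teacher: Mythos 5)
Your proposal matches the paper's proof in essentially every respect: the foliation $\{M_s\}_{s\in[-\eta,\eta]}$ with non-fattening endpoints, the intermediate-value selection of $s(\varepsilon)$ via the Evans--Soner--Souganidis convergence of $u^\varepsilon_{\pm\eta}$ toward the non-fattening level-set flows, the diagonal extraction $\varepsilon_j\to 0$, $s_j=s(\varepsilon_j)$ feeding into Proposition \ref{nodalset} and its corollary, and inheritance of unit-regularity and integrality from the Ilmanen--Soner--Tonegawa machinery. One small sign slip to note: under the paper's convention that signed distance is positive on the interior, $M_{+\eta}$ is the \emph{inner} leaf, so by the set inclusions $\lsf(M_0)(t_0)\subset\lsf(\mathrm{ext}(M_{+\eta}))(t_0)$ and $\lsf(M_0)(t_0)\subset\lsf(\mathrm{int}(M_{-\eta}))(t_0)$ one gets $u^\varepsilon_{+\eta}(X_0)\to -1$ and $u^\varepsilon_{-\eta}(X_0)\to +1$, the opposite of what you wrote; the intermediate-value conclusion is of course unaffected.
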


    \begin{remark}
        Recall, we mean the weak notion of `starting from' in terms of Hausdorff distances. Recall further, this can often be improved, in particular the case of smooth initial data, see Theorem \ref{goodstartingfrom}.
    \end{remark}
    \begin{proof}
        Let $\{M_s\}_{s\in[-\eta,\eta]}$ be the above foliation and let $X_0=(x_0,t_0)\in \lsf(M_0)$. We first consider end-points of the foliation. By the choice of foliation, $\lsf(M_{\pm\eta})$ do not fatten. Moreover, by the weak-set-flow characterisation of LSF by Ilmanen \cite{IlmanenER}, we have the set inclusions
            \begin{align*}
                \lsf(M_0)(t_0)&\subset \lsf(\mathrm{int}(M_{-\eta})(t_0))\\
                \lsf(M_0)(t_0)&\subset \lsf(\mathrm{ext}(M_{+\eta})(t_0)).
            \end{align*} 
        From these inclusions, we conclude the point $X_0$ will lie exterior to the region bounded by $M_{+\eta}(t_0)$ (the $t_0$ time slice of $\lsf(M_{+\eta})$) and interior to the region bounded by $M_{-\eta}(t_0)$. Thus, by Ilmanen's (or in fact Evans--Soner--Souganidis \cite{ESS}) convergence theorem for solutions to the $\varepsilon$-Allen--Cahn equation, for each $\kappa\in(0,\frac1{2})$, we can find a $\varepsilon_0>0$ such that if $\varepsilon\in(0,\varepsilon_0)$, we have
            \begin{align*}
                u^\varepsilon_{-\eta}(X_0)&\geq 1-\kappa\\
                u^\varepsilon_{+\eta}(X_0)&\leq \kappa-1.
            \end{align*}
        \begin{remark}
            Theorem \ref{contmax} also provides a maximum principle. We deduce that $s_0$ is either unique or there exists an interval $[s_1,s_2]\subset (-\eta,+\eta)$ on which $u^\varepsilon_s(X)=0$.
        \end{remark}
        We now apply Proposition \ref{nodalset}. We take a sequence $\varepsilon_j\to0$, and set $s_j=s_0(\varepsilon_j)$ as above.  Thus, by taking a subsequential limit, we find a limiting, unit-regular, integral Brakke flow $\cM_\infty$ starting from  $M_0$ that is supported at $X$.
    \end{proof}
\section{Examples}\label{examples}
We now collect some examples that demonstrate the applications of our results. These examples also highlight some interesting questions regarding the consequences of this work.
\subsection{The $g$-Wagon-Wheel of Ilmanen--White}

\begin{figure}[!ht]
    \centering
    \includegraphics[scale=0.2]{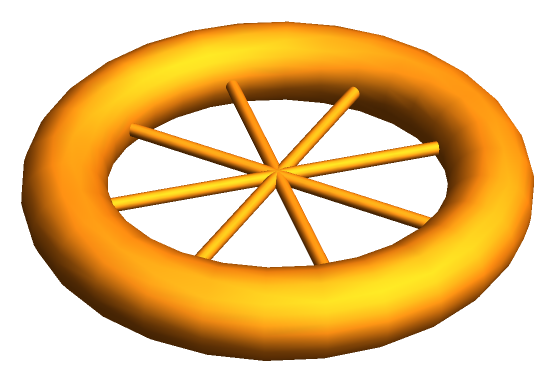}
    \textit{\caption{The 8-wheel of Ilmanen--White, Image reproduced from \cite{IW}\label{wheel}}}
\end{figure}

Consider the $g$-Wagon-Wheel, or $g$-wheel, of Ilamen--White, \cite[Definition 2.1]{IW}. The $8$-wheel is pictured in Figure \ref{wheel}. The definition of a $g$-wheel ensures the surface has dihedral symmetry in the $xy$-plane ($D_g$ invariant), and has an additional reflection symmetry across the $xy$-plane. Thus, it has symmetry group $D_g\times \mathbb{Z}_2$.

The work of Ilmanen--White shows that for a large value of $g$, the mean curvature flow of $M$ encounters a singularity at the origin at the first singular time. Moreover, this singularity is a conical, and for sufficiently large $g$ the model cone has sufficiently large cone angle that its forward evolution is non-unique. The work of Chodosh--Daniels-Holgate--Schulze, \cite{CDHS} hence shows the forward evolution is non-unique.

Ilmanen--White observed that the origin is contained in the LSF from $M$ from the first singular time, $T_1$ until the extinction of the outer flow, $T_\mathrm{ext}$. Thus, for each $t_0\in(T_1,T_\mathrm{ext})$ using Theorem \ref{supported}, we find a unit-regular integral Brakke flow, $\cM_{X_0}$, starting from the $g$-wheel that is supported at $X_0=(0,t_0)$. The initial data is smooth, so Theorem \ref{goodstartingfrom} means $\cM_{X_0}$ starts from $M$ in the classical sense.

We now show that each time-slice of $\cM_{X_0}$ also has $D_g\times \mathbb{Z}_2$-symmetry. Observe, we may foliate a neighbourhood of $M$ by surfaces that preserve the symmetry. Moreover, the symmetry is preserved by the $\varepsilon$-Allen--Cahn flow from each of these leaves, for every choice of $\varepsilon$. Thus, when taking limits in $s$ and $\varepsilon$, any limiting Brakke flow will inherit the symmetry. 

Recall, the support of $\cM_{X_0}$ at time $t_0$ includes the origin (by our theorem). The reflection symmetry in the $xy$-plane ensures this is a singular point: any `sheet' of the $t_0$ time-slice coming in to touch the origin not strictly contained in the $xy$-plane is reflected, so multiple `sheets' must touch at the origin. If the `sheet' is contained in the $xy$-plane, it must have higher multiplicity: the flow is a limit of solutions to the Allen--Cahn flow, and the reflection symmetry ensures that either side of the $xy$-plane must have the same phase. This is only possible with even multiplicity. This is perhaps easier to see in the example of the figure 8, see Section \ref{thefigure8}.

Whilst the resolution of the multiplicity-one conjecture by Bamler--Kleiner, \cite{BK}, applies to the outermost flows, it is presently unclear if the work of Bamler--Kleiner applies to the flow $\cM$. If it does, it rules out the higher multiplicity flow. It is a very interesting question to show whether or not the Bamler--Kleiner theory applies to the theory developed here, when starting from smooth surfaces.

\subsection{The Figure 8} \label{thefigure8}
\begin{figure}[!ht]
 
\centering
\begin{minipage}{0.3\textwidth}
    \centering

    \begin{tikzpicture}[scale =1.5]
    \def\a{3pt} \def\r{35pt} \def\rad{6pt}
        \draw[rounded corners=\rad,fill=white, draw=black, line width=1pt] (0,0) ++(45:1.1) arc (45:-45:1.1) -- (135:1.1) arc (135:225:1.1) -- (0,0) --cycle;
\end{tikzpicture}

\textit{\caption{The Figure 8 \label{fig8}}}
\end{minipage}
\hfill
 \begin{minipage}{0.3\textwidth}
\centering
\begin{tikzpicture}[scale = 1.5]

    \def\a{1pt} \def\r{35pt} \def\rad{8pt} \def\tip{0.4}
    \draw[rounded corners=\rad,fill=black!25, draw=black, line width=1pt] (0,0) ++ (45:1.1) arc (45:-45:1.1) -- (0,0) --+ (-135:1.1) arc (225:135:1.1) -- (0,0) --cycle  ;
        \draw[rounded corners=\rad,fill=white, draw=black, line width=1pt] (\tip,0) ++(45:0.6) arc (45:-45:0.6) -- (0,0)--cycle;
        \draw[rounded corners=\rad,fill=white, draw=black, line width=1pt] (-\tip,0) ++(135:0.6) arc (135:225:0.6) -- (0,0) --cycle;
        
\end{tikzpicture}

\textit{\caption{LSF a short time later\label{LSF8}}}
\end{minipage}
\hfill
 \begin{minipage}{0.3\textwidth}
\centering
\begin{tikzpicture}[scale = 1.3]
    
    \def\a{1pt} \def\r{35pt} \def\rad{6pt} \def\b{0.3}
        \draw[rounded corners=\rad,fill=white, draw=black, line width=1pt] (\b,0) --++(60:1) arc (60:-60:1) -- (\b,0);
        \draw[line width=1pt] (\b,0) --(-\b,0);
        \draw[rounded corners=\rad,fill=white, draw=black, line width=1pt] (-\b,0)-- ++(120:1) arc (120:240:1) -- (-\b,0) ;
        
\end{tikzpicture}

\textit{\caption{ The Horizontal\\ Two-Triple junction\label{twotriple}}}
\end{minipage}
\end{figure}

Consider now the Figure 8 curve (Figure \ref{fig8}), and its LSF, (Figure \ref{LSF8}). The figure 8 has $D_2$ symmetry.  Following roughly the same argument as for the $g$-wheel yields, for each time $t_0\in (0,T)$ where $T$ is the extinction time of the outer flow, a solution supported at $(0,t_0)$ . The $D_2$ symmetry ensures either we find the cross at time $t_0$, or we find a horizontal or vertical two-triple junction resolutions, as they also have $D_2$ symmetry and are supported at the origin, see Figure \ref{twotriple}.  However, the Allen-Cahn flow cannot produce Figure \ref{twotriple} (or its vertical equivalent) unless the middle bar has multiplicity $2n$, for $n\in \mathbb{Z}_{>0}$, as the phase does not change sign across it. 
The regularity theory of Kim--Tonegawa \cite{KimTonegawa} shows if we find the cross at time $t_0$, it existed for all previous times: if the cross regularises into two triple junctions or smooth curves, it cannot reform. In this case, it is possible to find a flow in which the cross persists from the initial time until the extinction time. 
Thus, the limit we find must preserve the cross until extinction, or resemble Figure \ref{twotriple} (or its vertical equivalent), with a even multiplicity middle bar. One might hope a monotonicity argument would rule out the two-triple junction resolutions with multiplicity strictly greater than 2 (they have density $\geq3$ at the junctions, larger than the initial maximum density of 2). Unfortunately, we cannot use monotonicity, as the theory developed here only attains the initial data in a Hausdorff sense. However, we note that the persistent cross and multiplicity 2 middle bar cases can be said to start from the figure 8 in the classical sense, as the densities agree at every point as $t\to 0^+$. 
 Instead of using monotonicity, we note that for each leaf of the initial folation, there are never more than two of each phase near the origin, ruling out multiplicity greater than 2. 
 It would be interesting to know which (or if all) of these flows exist. 
\vspace{-19pt}
\bibliographystyle{alpha}
\bibliography{biblio}
\end{document}